\numberwithin{equation}{section}
\newtheorem{theorem}{Theorem}[section]
\newtheorem{lem}[theorem]{Lemma}
\newtheorem{defi}[theorem]{Definition}
\newtheorem{pro}[theorem]{Proposition}
\newtheorem{coro}[theorem]{Corollary}
\newtheorem{remark}[theorem]{Remark}
\newtheorem{example}[theorem]{Example}
\def\mr{\mathbb{R}}
\def\eps{\varepsilon}
\newcommand{\NN}{\mathbb{N}}
\def\r2{\mathbb{R}^2}
\def\rd{\mathbb{R}^d}
\def\rdrd{\mathbb{R}^d\times\mathbb{R}^d}
\def\div{\mathrm{div}\,}
\newcommand{\xxi}{{\mbox{\boldmath$\xi$}}}
\newcommand{\PP}{\mathscr{P}}
\def\w{\mathcal{W}}
\def\id{\mbox{\boldmath$\mathfrak{i}$}}
\def\hw{\widehat{W}}
\def\tw{\widetilde{W}}
\newcommand{\Cost}{{\mathcal C}}
\def \no#1#2#3 {{\bf #1} (#3), #2.}
\def \eds#1#2#3 {#1, #2, #3.}
\def \nono#1#2#3#4 {{\bf#1} (#2), no.#3, #4.}
\newcommand{\de}{\partial}
\newcommand{\vv}{{\mbox{\boldmath$v$}}}
\newcommand{\eeta}{{\mbox{\boldmath$\eta$}}}
\newcommand{\loc}{{\mathrm{loc}}}
\newcommand{\N}{\mathbb{N}}
\newcommand{\R}{\mathbb{R}}
\title{Gradient flows for non-smooth interaction potentials}
\author{J. A. Carrillo, S. Lisini, E. Mainini}
\address{Jos\'e A. Carrillo, ICREA and Departament de Matem\`a\-ti\-ques,
Universitat Aut\`onoma de Barcelona, E-08193 Bellaterra, Spain. On
leave from: Department of Mathematics, Imperial College London,
London SW7 2AZ, UK.} \email{carrillo@mat.uab.es}
\address{Stefano Lisini,
Dipartimento di Matematica ``F. Casorati'', Universit\`a degli
Studi di Pavia, via Ferrata 1, 27100 Pavia, Italy.}
\email{stefano.lisini@unipv.it}
 \address{ Edoardo Mainini,
 D\'epartement de Math\'ematiques,
UMR 8628 Universit\'e Paris-Sud 11-CNRS,
B\^atiment 425,
Facult\'e des Sciences d'Orsay,
Universit\'e Paris-Sud 11,
F-91405 Orsay Cedex.}
\email{edoardo.mainini@math.u-psud.fr}
\subjclass[2010]{49K20, 35F99}
\begin{document}

\maketitle


\begin{abstract}
We deal with a nonlocal interaction equation describing the
evolution of a particle density under the effect of a general
symmetric pairwise interaction potential, not
necessarily in convolution form.
We describe the case of a convex (or $\lambda$-convex) potential, possibly not smooth at several points,
generalizing the results of \cite{CDFLS}. We also identify the cases in which
the dynamic is still governed by the continuity equation with
well-characterized nonlocal velocity field.
\end{abstract}


\section{Introduction}
Let us consider a distribution of particles, represented by a
Borel probability measure $\mu$ on $\rd$. We introduce the
interaction potential $\mathbf{W}:\rdrd\to\mr$.
The value $\mathbf{W}(x,y)$ describes the interaction of two particles of unit mass at the
positions $x$ and $y$. The
total energy of a distribution $\mu$ under the effect of the
potential is given by the interaction energy functional, defined
by
\begin{equation}\label{W}
    \mathcal{W}(\mu):=\frac12\int_{\rdrd}\mathbf{W}(x,y)\,d(\mu\times\mu)(x,y).
\end{equation}
We assume that $\mathbf{W}$ satisfies the following
assumptions:
\begin{itemize}
\item[\it i)] $\mathbf{W}:\rdrd\to\mr$ is symmetric, i.e.
\begin{equation}\label{sym}
\mathbf{W}(x,y)=\mathbf{W}(y,x) \quad\mbox{for every $x,y\in\rd$};
\end{equation}
\item[\it ii)]  $\mathbf{W}$ is a $\lambda$-convex function for
some $\lambda\in\mr$, i.e.
\begin{equation}\label{convex}
 \mbox{the function}\quad
(x,y)\mapsto\mathbf{W}(x,y)-\frac \lambda 2
(|x|^2+|y|^2)\quad\mbox{is convex};
\end{equation}
\item[\it iii)] $\mathbf{W}$ satisfies the quadratic growth
condition at infinity, i.e. there exists a constant $C>0$ such that
\begin{equation}\label{growth}\mathbf{W}(x,y)\leq C(1+|x|^2+|y|^2).
\end{equation}
\end{itemize}

We are interested in the evolution problem given by the
continuity equation
\begin{equation}\label{main}
    \de_t \mu(t) + \div(\vv(t)\mu(t))=0,\qquad \mu(0)=\mu_0,
\end{equation}
describing the dynamics of the particle density, whose total mass
is conserved, under the mutual attraction-repulsion force given by
\eqref{W}. The velocity vector field $\vv$ enjoys a nonlocal
dependence  on $\mu$. In the basic model represented by a $C^1$
potential $\mathbf{W}$ which depends only on the difference of its
variables, so that we may write $\mathbf{W}(x,y)=W(x-y)$, it is
given by  convolution:
\begin{equation}\label{vf}
\vv(t)=\nabla W\ast\mu(t).
\end{equation}
Under the assumptions \eqref{sym}, \eqref{convex},
\eqref{growth}, in general the function $W$ is not differentiable but only subdifferentiable,
therefore it is reasonable to consider a velocity field of the form
\begin{equation}\label{velocity}
\vv(t)=\eeta(t)\ast\mu(t),
\end{equation}
where $\eeta$ represents a Borel measurable selection in the
subdifferential of $W$, and we will write $\eeta\in\partial W$. In
general, such selection is not independent of $t$. We stress
that $x\mapsto\eeta(x)$ needs to be a pointwise defined object,
since the solutions we consider are probability measures, and
since this model typically presents concentration phenomena when
starting with absolutely continuous initial data.

In this paper, we are going to analyse equations of the form
\eqref{main}-\eqref{velocity} as the gradient flow of the
interaction energy \eqref{W} in the space of Borel probability
measures with finite second moment, endowed with the
metric-differential structure induced by the so-called Wasserstein
distance. This interpretation coming from the optimal transport
theory was introduced in \cite{O} for nonlinear diffusion
equations and generalized for a large class of functionals
including potential, interaction, and internal energy by different
authors \cite{CMV,AGS,CMV2}, see \cite{V,V2} for related
information.

The gradient flow interpretation allows to construct solutions by
means of variational schemes based on the euclidean optimal
transport distance as originally introduced in \cite{JKO} for the
linear Fokker-Planck equation. The convergence of these
variational schemes for general functionals was detailed in
\cite{AGS}. The results in this monograph, which are quickly
summarized in Section \ref{wassersteinsection}, apply to the
interaction equation \eqref{main}-\eqref{vf}, with a $C^1$ smooth
potential verifying the convexity assumption \eqref{convex} and a
growth condition at infinity stricter than \eqref{growth}.

On the other hand, these equations have appeared in the literature
as simple models of inelastic interactions \cite{MY,BCP,BCCP,T} in
which the asymptotic behavior of the equations is given by a total
concentration towards a unique Delta Dirac point measure. The
typical potential in these models was a power law,
$\mathbf{W}(x,y)=|x-y|^\alpha$, $\alpha\geq 0$. Moreover, it was
noticed in \cite{LT} that the convergence towards this unique
steady state was in finite time for certain range of exponents in
the one dimensional case.

Also these equations appear in very simplified swarming or
population dynamics models for collective motion of individuals,
see \cite{MEBS,BL,BCL,KSUB,BCLR} and the references therein. The
interaction potential models the long-range attraction and the
short-range repulsion typical in animal groups. In case the
potential is fully attractive, equation \eqref{main} is usually
referred as the aggregation equation. For the aggregation
equation, finite time blow-up results for weak-$L^p$ solutions,
unique up to the blow-up time, have been obtained in the
literature \cite{BCL,BLR,CR}. In fact, those results conjectured
that solutions tend to concentrate and form Dirac Deltas in finite
time under suitable conditions on the interaction potential. On
the other hand, the confinement of particles is shown to happen
for short-range repulsive long-range attractive potentials under
certain conditions \cite{CDFLS2}. Some singular stationary states
such as uniform densities on spheres have been identified as
stable/unstable for radial perturbations in \cite{BCLR} with sharp
conditions on the potential. Finally, in the one dimensional case,
stationary states formed by finite number of particles and smooth
stationary profiles are found whose stability have been studied in
\cite{FellnerRaoul1,FellnerRaoul2} in a suitable sense.

A global-in-time well-posedness theory of measure weak solutions
have been developed in \cite{CDFLS} for interaction potentials of
the form $\mathbf{W}(x,y)=W(x-y)$ satisfying the assumptions
\eqref{sym},\eqref{convex}, \eqref{growth}, and additionally being
$C^1$-smooth except possibly at the origin. The convexity
condition \eqref{convex} restricts the possible singularities of
the potential at the origin since it implies that $W$ is
Lipschitz, and therefore the possible singularity cannot be worse
than $|x|$ locally at the origin. Nevertheless, for a class of
potentials in which the local behavior at the origin is like
$|x|^\alpha$, $1\leq \alpha < 2$, the solutions converge towards a
Delta Dirac with the full mass at the center of mass of the
solution. The condition for blow-up is more general and related to
the Osgood criterium for uniqueness of ODEs \cite{BCL,CDFLS,BLR}.
Note that the center of mass of the solution is preserved, at
least formally, due to the symmetry assumption \eqref{sym}.

In this work, we push the ideas started in \cite{CDFLS} further in
the direction of giving conditions on the interaction potential to
have a global-in-time well-posedness theory of measure solutions.
The solutions constructed in Section \ref{wassersteinsection} will
be \emph{gradient flow solutions}, as in \cite{AGS}, built via the
variational schemes based on the optimal transport Wasserstein distance. The
crucial point for the analysis in this framework is the
identification of the velocity field in the continuity equation
satisfied by the limiting curve of measures from the approximating
variational scheme. In order to identify it, we need to
characterize the sub-differential of the functional defined in
\eqref{W} with respect to the differential structure induced by
the Wasserstein metric. The Wasserstein sub-differential of the functional $\mathcal{W}$, which is
rigorously introduced in Section \ref{wassersteinsection}, is
defined through variations along transport maps. It turns out that
that the element of minimal norm in this sub-differential, which
will be denoted by $\partial^o\mathcal{W}(\cdot)$, is the element
that governs the dynamics. Actually, it gives the velocity field
via the relation $\vv(t)=-\partial^o\mathcal{W}(\mu(t))$ for a.e. $t \in (0,\infty)$, which
corresponds to the notion of \emph{gradient flow solution}. This
notion will be discussed in Section \ref{wassersteinsection},
where we will give the precise definition and recall from
\cite[Chapter 11]{AGS} the main properties, such as the semigroup
generation.

In Section \ref{characterizationsection}, we give a
characterization of the subdifferential in the general case of the
interaction potential $\mathbf{W}(x,y)$ satisfying only the basic
assumptions \eqref{sym},\eqref{convex}, and \eqref{growth}.
However, the element of minimal norm in the subdifferential is not
fully identified and cannot be universally characterized.
Nevertheless, the global well-posedness of the evolution semigroup
in measures is obtained.

A distinguished role will be played by the case of a kernel
function $\mathbf{W}(x,y)$ which depends only on the difference
$x-y$ of its arguments. Hence we will often consider one of the
following additional assumptions.
\begin{itemize}
    \item[\it iv)] There exists $W:\rd\to\mathbb{R}$ such that
    \begin{equation}\label{difference}\mathbf{W}(x,y)=W(x-y).\end{equation}
\end{itemize}
\begin{itemize}
\item[\it v)] There exists $w:\mathbb{R}\to\mathbb{R}$ such that
\begin{equation}\label{distance}\mathbf{W}(x,y)=W(x-y)=w(|x-y|).\end{equation}
\end{itemize}
The radial hypothesis is frequently made in models, and
corresponds to an interaction between particles which depends only
on their mutual distance vector. In case $\mathbf{W}(x,y)$ is also
radial and convex, we can fully generalize the identification of
the element of minimal norm in the subdifferential of the
interaction energy done in \cite{CDFLS}, regardless of the number
of nondifferentiability points of $W$. We complement our results
with explicit examples showing the sharpness of these
characterizations.


Before to state the results and in order to fix notations we recall the characterization of subdifferential for $\lambda$-convex functions.
Given a $\lambda$-convex function $V:\R^k\to \R$, a vector $\xxi$ belongs to the subdifferential of $V$ at the point $x\in\R^k$ if and only if
\begin{equation}\label{def:subdiff}
    V(z)-V(x) \geq \langle \xxi, z-x \rangle + \frac{\lambda}{2}|z-x|^2 \qquad \forall z\in\R^k,
\end{equation}
and we write $\xxi\in \partial V(x)$.
In this case, for every $x\in\R^k$, we have that
$\partial V(x)$ is a not empty closed convex subset of $\R^k$. We denote by
$\partial^oV(x)$ the unique element of minimal euclidean norm in $\partial V(x)$.

\subsection*{The main results}
Let us give a brief summary of the results contained in this
paper. The main theorem deals with radial-convex potentials and
reads as follows.

\emph{Let $\mathbf{W}$ satisfy the three basic assumptions above:
\eqref{sym}, \eqref{convex}, and \eqref{growth}. If in addition
$\textbf{W}$ satisfies \eqref{difference}, \eqref{distance} and
is convex (that is, $\lambda\geq 0$ in \eqref{convex}), then there exists a
unique \emph{gradient flow solution} to the equation
\begin{equation}\label{strongequation}
\partial_t\mu(t)-\div((\partial^oW\ast\mu(t))\mu(t))=0.
\end{equation}}
This solution is the gradient flow of the energy $\mathcal{W}$, in
the sense that the velocity field in \eqref{strongequation}
satisfies
\[
\partial^oW\ast\mu(t)=\partial^o\mathcal{W}(\mu(t)).
\]

On the other hand, when omitting the radial hypothesis \eqref{distance}, or
when letting the potential be $\lambda$-convex but not convex, we
show that  the evolution of the system under the effect of the
potential, that is the gradient flow of $\mathcal{W}$, is
characterized by \eqref{main}-\eqref{velocity}, where
$\eeta(t)\in$ is a Borel anti-symmetric selection in $\partial W$ for almost every $t$. The corresponding
rigorous statement is found in Section \ref{wassersteinsection}.

About this last result, let us remark that the velocity vector
field is still written in terms of a suitable selection $\eeta$ in
the local subdifferential of $W$, but such selection $\eeta$ is
not in general the minimal one in $\partial W$, and it is not a
priori independent of $t$.
By this characterization we also recover the result of \cite{CDFLS}, where the only non smoothness point for W is the origin: in such case, for any $t$ we are left with  $\eeta(t)(x)=\nabla W(x)$ for $ x\neq 0$ and $\eeta(t)(0)=0$ for $x=0$, by anti-symmetry.
 We stress that, due to the nonlocal structure of the problem, the task of identifying the velocity vector field becomes much more involved when $W$ has several non smoothness points, even if it is $\lambda$-convex. Later in Section
\ref{convolutionsection}, we will analyse some particular
examples, showing that in general it is not possible to write the
velocity field in terms of a single selection in $\partial W$.

Finally, when omitting also the assumption \eqref{difference}, we
break the convolution structure: in this more general case we show
that the velocity is given in terms of an element of
$\partial_1 \mathbf{W}$, or equivalently of $\partial_2\mathbf{W}$
by symmetry, where $\partial_1 \mathbf{W}$ and $\partial_2\mathbf{W}$ denotes the partial subdifferentials of
$\mathbf{W}$ with respect to the first $d$ variables or the last $d$ variables respectively,
\begin{equation*}
\vv(t)(x)=\int_{\rd}\eeta(t)(x,y)\,d \mu(t)(y),
\end{equation*}
where $\eeta(t)\in\partial_1\mathbf{W}$ for almost any $t$. An additional
joint subdifferential condition is also be present in this case,
for the rigorous statement we still address to the next section.

\subsection*{Pointwise particle model and asymptotic behavior}
In the model case of a system of $N$ point particles, discussed in
Section \ref{particlesection}, the dynamics are governed by a
system of ordinary differential equations. In this case
equation \eqref{main}-\eqref{velocity} corresponds to
\begin{equation*}
\frac {dx_i}{dt}=\sum_{j=1}^N m_j\,\eeta(t)(x_j-x_i),\qquad
i=1,\ldots, N,
\end{equation*}
where $x_i(t)$ is the position of the $i$-th particle and $m_i$ is
its mass. It is shown in \cite{CDFLS} that if the attractive
strength of the potential is sufficiently high, all the particles
collapse to the center of mass in finite time. We will remark how
this result is still working under our hypotheses under the same
Osgood criterium as in \cite{BCL,CDFLS} for fully attractive
potentials. For non-convex non-smooth repulsive-attractive
potentials, albeit $\lambda$-convex, the analysis leads to
non-trivial sets of stationary states with singularities that
cannot be treated by the theory in \cite{CDFLS}. Our analysis
shows that a very wide range of asymptotic states is indeed
possible, we give different explicit examples.

\subsection*{Plan of the paper}
In the following Section \ref{wassersteinsection} we introduce the
optimal transport framework and the basic properties of our energy
functional, in particular the subdifferentiability and  the
$\lambda$-convexity along geodesics. We briefly explain what is a
gradient flow in the metric space $\PP_2(\mathbb{R}^d)$ and we
introduce the notion of gradient flow solution.  We present the
general well-posedness result of \cite{AGS} and show how it will
apply to our interaction models, stating our main results. In
Section \ref{characterizationsection}, we make a fine analysis on
the Wasserstein subdifferential of $\mathcal{W}$ and find a first
characterization of its element of minimal norm. In Section
\ref{convolutionsection} we particularize the characterization to
the case of assumption \eqref{difference}, which is the
convolution case. In particular, we have the strongest result in
the case of assumption \eqref{distance}. Section
\ref{particlesection} relates these arguments to the finite time
collapse results in \cite{CDFLS} showing the characterization of
the velocity field for particles. Section \ref{asymptoticsection}
gives examples of non-smooth non-convex repulsive-attractive
potentials, albeit $\lambda$-convex, leading to non-trivial sets
of stationary states. Finally, the Appendix \ref{appendix} is
devoted to recall technical concepts from the differential
calculus in Wasserstein spaces which are needed in Section
\ref{characterizationsection}.


\section{Wasserstein subdifferential and gradient flow of the
interaction energy}\label{wassersteinsection}

\subsection{Optimal transport framework}
We denote by $\PP_2(\rd)$ the space of Borel probability measures
over $\rd$ with finite second moment, i.e., the set of Borel probability
measures $\mu$ such that
$$
\int_{\rd} |x|^2\,d\mu(x)<\infty \, .
$$
The convergence of probability measures is considered in the
narrow sense defined as the weak convergence in the duality
with continuous and bounded functions over $\rd$.
Given $\mu,\nu\in\PP_2(\rd)$ and $\gamma\in \Gamma(\mu,\nu)$, where
\begin{equation*}
\begin{aligned}
\Gamma(\mu,\nu):=\{\gamma\in\PP_2(\rdrd): \gamma(\Omega\times\rd)=\mu(\Omega),\,&\gamma(\rd\times\Omega)=\nu(\Omega),\\
& \text{ for every Borel set }\Omega\subset \rd\},
\end{aligned}
\end{equation*}
the euclidean quadratic transport cost between $\mu$ and $\nu$ with respect to the transport plan $\gamma$
is defined by
\[
\mathcal{C}(\mu,\nu\, ;\gamma)=\left(\int_{\rd\times\rd}|x-y|^2\,d\gamma(x,y)\right)^{1/2}.
\]
The ``Wasserstein distance'' between $\mu$ and $\nu$ is
defined by
\begin{equation}\label{defdw}
d_W(\mu,\nu)=\inf_{\gamma\in\Gamma(\mu,\nu)}\mathcal{C}(\mu,\nu\, ;\gamma).
\end{equation}
It is well known that the $\inf$ in \eqref{defdw} is attained by a
minimizer. The minimizers in \eqref{defdw} are called optimal
plans. We denote by $\Gamma_o(\mu,\nu)\subset\Gamma(\mu,\nu)$ the
set of optimal plans between $\mu$ and $\nu$. It is also well
known that $\mu_n \to \mu$ in $\PP_2(\rd)$ (i.e.
$d_W(\mu_n,\mu)\to 0$) if and only if $\mu_n$ narrowly converges
to $\mu$ and
$\int_{\rd}|x|^2\,d\mu_n(x)\to\int_{\rd}|x|^2\,d\mu(x)$.
The space $\PP_2(\rd)$ endowed with the distance $d_W$ is a complete and separable metric space.
 For all
the details on Wasserstein distance and optimal transportation, we
refer to \cite{AGS,V2}.

We recall the push forward notation for a map
$\mathbf{s}:(\rd)^m\to(\rd)^k$, $m,k\geq 1$, and a measure
$\mu\in\PP((\rd)^m)$: the measure
$\mathbf{s}_\#\mu\in\PP((\rd)^k)$ is defined by
$\mathbf{s}_\#\mu(A)=\mu(\mathbf{s}^{-1}(A))$, where $A$ is a
Borel set. A transport plan $\gamma\in\Gamma(\mu,\nu)$ may be
induced by a map $\mathbf{s}:\mathbb{R}^d\to\mathbb{R}^d$ such that $\mathbf{s}_\#\mu=\nu$.
This means that $\gamma=(\id,\mathbf{s})_\#\mu$, where
$\id:\rd\to\rd$ denotes the identity map over $\rd$ and
$(\id,\mathbf{s}):\rd\to\rdrd$ is the product map. Finally,
$\pi^j$ will stand for the projection map on the $j$-th component
of a product space. Hence, if $\gamma$ is a probability measure
over a product space (for instance a transport plan),
$\pi^j_\#\gamma$ is its $j$-th marginal.

The first properties of the interaction potential functional
$\mathcal{W}$ given by  \eqref{W} are contained in the next Proposition.

\begin{pro}\label{firstproperties}
Under assumptions \eqref{convex} and \eqref{growth}, the
functional $\mathcal{W}$ is lower semicontinuous with respect to
the $d_W$ metric and enjoys the following $\lambda$-convexity
property: for every $\mu,\nu\in\PP_2(\rd)$ and every
$\gamma\in\Gamma(\mu,\nu)$
it holds
\begin{equation}\label{geoconv}
\mathcal{W}(\theta^\gamma(t))\leq(1-t)\mathcal{W}(\mu)+t\mathcal{W}(\nu)-\frac\lambda
2\, t(1-t)\mathcal{C}^2(\mu,\nu;\gamma),
\end{equation}
where $\theta^\gamma$ denotes the interpolating curve
$t\in[0,1]\mapsto\theta^\gamma(t)=((1-t)\pi^1+t\pi^2)_\#\gamma\in\PP_2(\rd)$.
\end{pro}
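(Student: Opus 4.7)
The plan is to handle the two assertions separately. I would first derive a two-sided quadratic bound for $\mathbf{W}$: \eqref{growth} provides the upper bound, while \eqref{convex} applied at the origin (the convex function $\mathbf{W}-\frac{\lambda}{2}(|x|^2+|y|^2)$ is real-valued, hence admits a subgradient there by \eqref{def:subdiff}) yields an affine-quadratic lower bound $\mathbf{W}(x,y)\geq -C(1+|x|^2+|y|^2)$. I also use the standard fact that any $\lambda$-convex function on $\R^{2d}$ is automatically continuous.

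For the lower semicontinuity, take $\mu_n\to\mu$ in $d_W$. Then the product measures $\mu_n\times\mu_n$ converge to $\mu\times\mu$ narrowly on $\rdrd$ with convergence of the second moments, so $(x,y)\mapsto 1+|x|^2+|y|^2$ is uniformly integrable with respect to $\{\mu_n\times\mu_n\}$. Combined with continuity and the two-sided quadratic bound on $\mathbf{W}$, this permits passage to the limit and gives $\mathcal{W}(\mu_n)\to\mathcal{W}(\mu)$---in particular the required lower semicontinuity (actually continuity).

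For the $\lambda$-convexity along the interpolating curves, the key step is a pushforward identification. I would define
\[
T_t:\R^{4d}\to\R^{2d},\qquad T_t(x_1,y_1,x_2,y_2):=\bigl((1-t)x_1+t y_1,\;(1-t)x_2+t y_2\bigr),
\]
and check that $(T_t)_\#(\gamma\times\gamma)=\theta^\gamma(t)\times\theta^\gamma(t)$, since both marginals on $\R^d$ coincide with $\theta^\gamma(t)$ and are independent by the product structure of $\gamma\times\gamma$. Next I apply \eqref{convex} pointwise to $\mathbf{W}$ between the points $(x_1,x_2)$ and $(y_1,y_2)$ of $\R^{2d}$:
\[
\mathbf{W}(T_t(x_1,y_1,x_2,y_2))\leq(1-t)\mathbf{W}(x_1,x_2)+t\mathbf{W}(y_1,y_2)-\frac{\lambda}{2}t(1-t)\bigl(|x_1-y_1|^2+|x_2-y_2|^2\bigr).
\]
Integrating against $d\gamma(x_1,y_1)\,d\gamma(x_2,y_2)$ and using the marginal conditions $\pi^1_\#\gamma=\mu$ and $\pi^2_\#\gamma=\nu$, the right-hand side evaluates to $2(1-t)\mathcal{W}(\mu)+2t\mathcal{W}(\nu)-\lambda t(1-t)\mathcal{C}^2(\mu,\nu;\gamma)$; dividing by two yields \eqref{geoconv}.

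The only mildly delicate step is the pushforward identification of $\theta^\gamma(t)\times\theta^\gamma(t)$ as $(T_t)_\#(\gamma\times\gamma)$; once that is in place, the argument is just a pointwise $\lambda$-convexity inequality turned into an integral one via Fubini, with the two-sided quadratic bound on $\mathbf{W}$ ensuring that all the integrals involved are finite.
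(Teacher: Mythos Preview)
Your argument is correct and is precisely the standard one the paper invokes: the paper does not spell out a proof but simply states that ``the lower semicontinuity follows from standard arguments'' and refers to \cite[\S 9.3]{AGS} for the convexity along interpolating curves. Your treatment (two-sided quadratic bound plus uniform integrability for the first part, and the pushforward identity $(T_t)_\#(\gamma\times\gamma)=\theta^\gamma(t)\times\theta^\gamma(t)$ followed by integration of the pointwise $\lambda$-convexity inequality for the second) is exactly that standard argument made explicit.
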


The lower semicontinuity follows from standard arguments, for the
convexity along interpolating curves we refer to \cite[¤9.3]{AGS}.
In particular, since every constant speed
Wasserstein geodesic is of the form $\theta^\gamma$ where $\gamma$
is an optimal plan, then $\mathcal{W}$ is $\lambda$-convex along
every Wasserstein geodesics. We adapt from \cite{AGS} the
definition of the Wasserstein subdifferential for the
$\lambda$-convex functional $\mathcal W$.

\begin{defi}[\textbf{The Wasserstein subdifferential of $\mathcal{W}$}]\label{subdiffdefinition}
Let $\mu\in\PP_2(\rd)$. We say that the vector field ${\xxi}\in
L^2(\rd,\mu;\rd)$ belongs to $\partial\mathcal{W}(\mu)$, the
Wasserstein  subdifferential of the $\lambda$-convex functional
$\w:\PP_2(\rd)\to (-\infty,+\infty]$ at the point $\mu$, if for
every $\nu\in \PP_2(\rd)$ there exists
$\gamma\in\Gamma_o(\mu,\nu)$ such that
\begin{equation}\label{subdifferentialdefinition}
\w(\nu)-\w(\mu)\geq\int_{\rd}\langle\mathbf{\xxi}(x),
y-x\rangle\,d\gamma(x,y) + \frac\lambda 2\,
\mathcal{C}^2(\mu,\nu\, ;\gamma).
\end{equation}
We say that that $\xxi\in\partial_S\mathcal{W}(\mu)$, the strong
subdifferential of $\mathcal{W}$ at the point $\mu$, if for every
$\nu\in\PP_2(\rd)$ and for every admissible plan
$\gamma\in\Gamma(\mu,\nu)$, \eqref{subdifferentialdefinition}
holds.
\end{defi}

The metric slope of the functional
 $\w$ at the point $\mu$ is defined as follows:
\begin{equation*}
    |\partial\w|(\mu):=\limsup_{\nu\to\mu \mbox{ \rm {\footnotesize in }} \PP_2(\rd)}\frac{\left(\w(\nu)-\w(\mu)\right)^+}{d_W(\nu,\mu)},
\end{equation*}
where $(a)^+$ denotes the positive part of the real number $a$.
Since $\w$ is $\lambda$-convex we know that
\begin{equation}\label{convexslope}
    |\partial\w|(\mu)=\min\left\{\|\xxi\|_{L^2(\rd,\mu;\rd)}:\xxi\in\partial
    \w(\mu)\right\}.
\end{equation}
Moreover, the element realizing the minimal norm in
\eqref{convexslope} is unique and we denote it by
$\partial^o\w(\mu)$ (see \cite[Chapter 10]{AGS}). The element of
minimal norm in the subdifferential plays a {\it crucial role},
since it is known to be the velocity vector field of the evolution
equation associated to the gradient flow of the functional
\eqref{W} under certain conditions as reviewed next.


\subsection{The {gradient flow solution}} As already shown in
\cite{CDFLS}, we are forced to consider a notion of solution which
only assumes that the densities are in fact, measures. Actually,
in case of attractive radial potentials verifying assumptions
\eqref{sym}, \eqref{convex} and \eqref{growth}, i.e.
$\textbf{W}(x,y)=w(|x-y|)$, with $w$ increasing, it was
shown in \cite{BCL,BLR} that weak-$L^p$ solutions blow-up in
finite time. Moreover, these weak-$L^p$ solutions can be uniquely
continued as measure solutions, as proved in \cite{CDFLS}, leading
to a total collapse in a single Delta Dirac at the center of mass
in finite time. Furthermore, particle solutions, i.e, solutions
corresponding to an initial data composed by a finite number of
atoms, remain particle solutions for all times for the evolution
of \eqref{main}. Summarizing we can only expect that a regular
solution enjoys local in time existence.

Our  well-posedness results are based on  the following abstract
theorem for gradient flow solutions. For all the details we refer
to \cite[Theorem 11.2.1]{AGS}, where some more properties of these
solutions are remarked.

Before stating the Theorem, we say that a curve $t\in [0,\infty)\mapsto \mu(t)\in \PP_2(\rd)$ is locally absolutely continuous with locally finite energy, and we denote it by $\mu \in AC^2_{\loc}([0,\infty);\PP_2(\R^d))$, if
the restriction of $\mu$ to the interval $[0,T]$ is absolutely continuous for every $T>0$ and
its metric derivative, which exists for a.e. $t>0$ defined by
\[
|\mu'|(t):=\lim_{s\to t}\frac{d_W(\mu(s),\mu(t))}{|t-s|},
\]
belongs to $L^2(0,T)$ for every $T>0$.

\begin{theorem}\label{gradientflow}
Let $\mathbf{W}$ satisfy the hypotheses \eqref{sym},
\eqref{convex} and \eqref{growth}. For any initial datum
$\mu_0\in\PP_2(\rd)$, there exists a unique curve
$\mu \in AC^2_{\loc}([0,\infty);\PP_2(\R^d))$ satisfying
\begin{align*}
&{\partial_t \mu(t)} + \div (\vv(t) \mu(t)) = 0
\mbox{ in } \mathcal{D}'((0,\infty)\times\R^d),\\
&\vv(t) = -\partial^o\mathcal{W}(\mu(t)),\ \ \ \text{ for a.e. } \,\,t>0,\\
&\|\vv(t)\|_{L^2(\mu(t))}=|\mu'|(t) \ \ \ \text{ for a.e. } \,\,t>0,
\end{align*}
with $\mu(0)=\mu_0$. The energy identity
\begin{equation*}
\int_a^b\int_{\R^d} |\vv(t,x)|^2 \,d\mu(t)(x)\,dt +
\mathcal{W}(\mu(b)) = \mathcal{W}(\mu(a))
\end{equation*}
holds for all $0\leq a\leq b<\infty$. Moreover, the solution is
given by a $\lambda$-contractive semigroup $S(t)$ acting on
$\PP_2(\rd)$, that is $\mu(t)=S(t)\mu_0$ with
\[
d_W(S(t)\mu_0,S(t)\nu_0)\leq e^{-\lambda
t}d_W(\mu_0,\nu_0),\quad\forall\mu_0,\nu_0\in\PP_2(\rd) \,.
\]
\end{theorem}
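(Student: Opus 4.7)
The plan is to deduce the theorem by verifying the hypotheses of the abstract gradient flow result \cite[Theorem 11.2.1]{AGS} for the functional $\mathcal{W}$ on the metric space $(\PP_2(\rd), d_W)$. That abstract result requires four ingredients: properness, $d_W$--lower semicontinuity, $\lambda$-convexity along generalized geodesics, and a coercivity condition guaranteeing that the Moreau--Yosida functional $\mu\mapsto\mathcal{W}(\mu)+\frac{1}{2\tau}d_W^2(\mu,\bar{\mu})$ is bounded below with narrowly compact sublevels for some reference measure $\bar{\mu}\in\PP_2(\rd)$ and some $\tau>0$.

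First I would dispatch the easy items. Properness is immediate since $|\mathcal{W}(\mu)|<\infty$ on all of $\PP_2(\rd)$ by the quadratic growth \eqref{growth} and finiteness of the second moment. Lower semicontinuity in $d_W$ and the required convexity property are exactly the content of Proposition \ref{firstproperties}: note that \eqref{geoconv} holds along $\theta^\gamma$ for \emph{every} $\gamma\in\Gamma(\mu,\nu)$, not merely for optimal plans, which is strictly stronger than Wasserstein geodesic convexity and already encompasses the generalized geodesic convexity required by the abstract framework.

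Next I would establish coercivity. Combining the upper bound \eqref{growth} with the $\lambda$-convexity \eqref{convex} --- the latter forces a quadratic lower bound on $\mathbf{W}$ about any fixed point --- one obtains constants $A,B>0$ such that $\mathbf{W}(x,y)\geq -A-B(|x|^2+|y|^2)$, whence
\[
\mathcal{W}(\mu) \;\geq\; -A - 2B\int_{\rd}|x|^2\,d\mu(x).
\]
Since the right-hand side is controlled by $d_W^2(\mu,\bar{\mu})$ for any fixed $\bar{\mu}$ with compact support, the Moreau--Yosida functional is bounded below for small $\tau>0$; compactness of its sublevels then follows from the tightness supplied by the second-moment bound together with the narrow lower semicontinuity of $\mathcal{W}$ and of $d_W^2(\cdot,\bar{\mu})$.

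Once these four hypotheses are verified, \cite[Theorem 11.2.1]{AGS} directly delivers every conclusion of the statement: existence and uniqueness of the curve $\mu\in AC^2_{\loc}([0,\infty);\PP_2(\rd))$ starting from $\mu_0$, the continuity equation with $\vv(t)=-\partial^o\mathcal{W}(\mu(t))$ and the speed identity $\|\vv(t)\|_{L^2(\mu(t))}=|\mu'|(t)$, the energy identity on every subinterval, and the $\lambda$-contractive semigroup property with sharp exponential constant $e^{-\lambda t}$. The only genuine technical point is the coercivity estimate, which is nevertheless a routine consequence of the interplay between quadratic growth and $\lambda$-convexity; everything else is black-boxed from \cite{AGS}.
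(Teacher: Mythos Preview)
Your approach is exactly the paper's: Theorem \ref{gradientflow} is stated there as a direct application of \cite[Theorem 11.2.1]{AGS} with no proof beyond the reference, and you are correctly filling in the hypothesis verification (properness, $d_W$-lower semicontinuity, convexity along generalized geodesics via Proposition \ref{firstproperties}, and coercivity from the quadratic lower bound). One small correction: $\mathcal{W}$ alone is \emph{not} narrowly lower semicontinuous in general (the quadratic lower bound may be negative), so the compactness-of-sublevels step needs the narrow lsc of the full Moreau--Yosida sum $\mathcal{W}(\cdot)+\tfrac{1}{2\tau}d_W^2(\cdot,\bar\mu)$ for small $\tau$---this is precisely what Remark \ref{lsc} provides.
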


The unique curve given by Theorem \ref{gradientflow} is called
\emph{gradient flow solution} for equation
\begin{equation}\label{particular}
\partial_t\mu(t)=\div (\partial^o\mathcal{W}(\mu(t))\mu(t)).
\end{equation}
Let us finally remark that weak measure solutions as defined in
\cite{CDFLS} are equivalent to gradient flow solutions as shown
there.

Characterizing the element of minimal norm
$\partial^o\mathcal{W}(\mu)$ is then essential to link the
constructed solutions to the sought equation
\eqref{main}-\eqref{vf} or \eqref{main}-\eqref{velocity}. This
characterization was done in \cite{CDFLS} for potentials
satisfying \eqref{sym}, \eqref{convex}, \eqref{growth} and
\eqref{difference}, being the potential $W$ $C^1$-smooth except
possibly at the origin. Under those assumptions, the authors
identified $\partial^o\mathcal{W}(\mu)$ as $\partial^o W\ast\mu$,
i.e.
$$
\partial^o W\ast\mu (x) = \int_{x\neq y} \nabla W(x-y)\,d\mu(y).
$$

The main results in the present work will show that this
characterization can be generalized to {\it convex} potentials
satisfying assumptions \eqref{sym}, \eqref{growth} and the radial
radial hypothesis \eqref{distance}, regardless of the number of
points of non-differentiability of the potential $W$. In Theorem
\ref{radial}, we show that under those conditions, the formula
$\partial^o\mathcal{W}(\mu)=\partial^o W\ast\mu$ also holds, and
the equation takes the form \eqref{strongequation}, which
generalizes the standard form of the interaction potential
evolution \eqref{main}-\eqref{vf}. In the most general case, i.e.
for general potentials satisfying only \eqref{sym}, \eqref{convex}
and \eqref{growth}, we will obtain a characterization in terms of
generic Borel measurable selections in $\partial \mathbf{W}$.
Therefore, the next two sections are devoted to the study of the
velocity field $\partial^o \mathcal{W}$, in order to apply the
abstract result above to the aggregation equation. Let us state
the results:
\begin{itemize}
    \item[$\bullet$] Let $\mathbf{W}$ satisfy \eqref{sym}, \eqref{convex} and \eqref{growth}. Let $\mu\in\PP_2(\rd)$. There holds
\begin{equation}\label{noconvolutionbis}
\partial^o\mathcal{W(\mu)}=\int_{\rd}\eeta(\cdot,y)\,d\mu(y)
\end{equation}
        for some selection   $\eeta\in\partial_1 \mathbf{W}$ having the form
    \begin{equation}\label{joint}\eeta(x,y)=\frac12 \left(\eeta^1(x,y)+
    \eeta^2(y,x)\right),
\end{equation}
     with the couple $(\eeta^1,\eeta^2)$ belonging
    to the joint subdifferential $\partial \mathbf{W}$. This is
    shown in Theorem \ref{minimalselectionchar}.
    \item[$\bullet$] Under the additional assumption
    \eqref{difference}, let $\mu\in\PP_2(\rd)$.
Then we have
\begin{equation}\label{convolution}
\partial^o\mathcal{W(\mu)}=\eeta\ast\mu
\end{equation}
 for some $\eeta\in\partial W$. This is the
characterization following from Corollary \ref{coro}.
    \item[$\bullet$] Finally, when the further condition
    \eqref{distance} holds, and the potential is convex (not only $\lambda$-convex for a negative $\lambda$) there is
    \begin{equation}\label{convolutionbis}
\partial^o\mathcal{W}(\mu)=\partial^o W \ast\mu
    \end{equation}
    for all $\mu\in\PP_2(\rd)$.
    Here, $\partial^o W$ is the element of minimal norm of the
    subdifferential of $W$.  This is proven in the
    subsequent Theorem \ref{radial}.
\end{itemize}
\begin{remark}\rm
Substituting \eqref{noconvolutionbis}, \eqref{convolution} or
\eqref{convolutionbis} in \eqref{particular} and applying Theorem
\ref{gradientflow}, one obtains the corresponding well-posedness
result. In Section \ref{convolutionsection} we will show that
the selection $\eeta$ appearing in \eqref{convolution} (and thus
also the one in \eqref{noconvolutionbis}) is in general depending
on $\mu$. Therefore, in the case of
\eqref{convolution}, the dynamic will be governed by a velocity
field of the form \eqref{velocity}, where the selection depends in
general on $t$. Similarly for the case of
\eqref{noconvolutionbis}. On the other hand, we stress that a
consequence of the last characterization \eqref{convolutionbis},
when applying Theorem \ref{gradientflow},
is that the selection corresponding to the velocity $\vv(t)$ does not depend on $t$.
\end{remark}
\begin{remark}\rm
The joint subdifferential constraint \eqref{joint} has a natural
interpretation: there is a symmetry in the interaction between
particles (action-reaction law).
\end{remark}



\section{Characterization of the element of minimal norm in the
subdifferential}\label{characterizationsection}

In this section and in the next we analyze  the Wasserstein
subdifferential of $\mathcal{W}$ and we prove the main core
results of this work.

\begin{theorem}\label{inequality}
Consider a Borel measurable selection
$(\eeta^1,\eeta^2)\in\partial \mathbf{W}$,
i.e., $(\eeta^1,\eeta^2):\rdrd\to\rdrd$ is a Borel measurable function such that
$(\eeta^1(x,y),\eeta^2(x,y))\in\partial \mathbf{W} (x,y)$ for every $(x,y)\in\rdrd$.
For any $\mu\in\PP_2(\rd)$, the map
\begin{equation}\label{belongs}
\xxi(x):=\frac12\int_{\rd}(\eeta^1(x,y)+\eeta^2(y,x))\,d\mu(y)
\end{equation}
belongs to $\partial_S\mathcal{W}(\mu)$.
In particular $\partial_S\mathcal{W}(\mu)$ is not empty.
\end{theorem}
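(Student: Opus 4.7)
The plan is to verify the strong subdifferential inequality \eqref{subdifferentialdefinition} for the candidate $\xxi$ defined in \eqref{belongs}, directly for arbitrary $\nu\in\PP_2(\rd)$ and arbitrary $\gamma\in\Gamma(\mu,\nu)$. The key computational idea is to represent the finite difference $\mathcal{W}(\nu)-\mathcal{W}(\mu)$ through the product plan $\gamma\otimes\gamma$ on $(\rdrd)\times(\rdrd)$. Its $(x_1,x_2)$-marginal is $\mu\otimes\mu$ and its $(y_1,y_2)$-marginal is $\nu\otimes\nu$, so
\[\mathcal{W}(\nu)-\mathcal{W}(\mu)=\frac12\iint\bigl[\mathbf{W}(y_1,y_2)-\mathbf{W}(x_1,x_2)\bigr]\,d\gamma(x_1,y_1)\,d\gamma(x_2,y_2).\]

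Next I would apply the pointwise $\lambda$-convexity inequality \eqref{def:subdiff} for $\mathbf{W}$ at the joint point $(x_1,x_2)\in\rdrd$ with displacement $(y_1-x_1,y_2-x_2)$, which gives
\begin{align*}
\mathbf{W}(y_1,y_2)-\mathbf{W}(x_1,x_2)&\geq \langle\eeta^1(x_1,x_2),y_1-x_1\rangle+\langle\eeta^2(x_1,x_2),y_2-x_2\rangle\\
&\quad +\tfrac\lambda2\bigl(|y_1-x_1|^2+|y_2-x_2|^2\bigr).
\end{align*}
I would integrate this against $d\gamma(x_1,y_1)\,d\gamma(x_2,y_2)$ and apply Fubini--Tonelli. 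The quadratic remainder integrates to $\lambda\,\mathcal{C}^2(\mu,\nu;\gamma)$, which after the outer factor $\tfrac12$ produces exactly the $\tfrac\lambda2\,\mathcal{C}^2(\mu,\nu;\gamma)$ term appearing in \eqref{subdifferentialdefinition}. In the first scalar product the integrand is independent of $y_2$, so integration in $y_2$ collapses $\gamma$ to its $x_2$-marginal $\mu$; in the second scalar product the same mechanism in the $(x_1,y_1)$-variables collapses $\gamma$ to $\mu$ in the $x_1$-variable. After relabelling the dummy integration variable, the two linear contributions combine to
\[\int\Big\langle \tfrac12\!\int\!\bigl[\eeta^1(x,z)+\eeta^2(z,x)\bigr]\,d\mu(z),\,y-x\Big\rangle\,d\gamma(x,y),\]
which is precisely $\int\langle\xxi(x),y-x\rangle\,d\gamma(x,y)$ by the definition \eqref{belongs}. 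Since $\gamma\in\Gamma(\mu,\nu)$ and $\nu\in\PP_2(\rd)$ are arbitrary, this proves $\xxi\in\partial_S\mathcal{W}(\mu)$.

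The main obstacle is checking that $\xxi\in L^2(\rd,\mu;\rd)$ and that the Fubini steps are legitimate. For this I would invoke a standard convex-analysis fact: a $\lambda$-convex function on $\rdrd$ with quadratic upper bound (as guaranteed by \eqref{convex} and \eqref{growth}) admits subgradients of at most linear growth, whence $|\eeta^i(x,y)|\leq C(1+|x|+|y|)$ for $i=1,2$. Combined with $\mu\in\PP_2(\rd)$ and Jensen's inequality, this yields the pointwise bound $|\xxi(x)|\leq C'(1+|x|)$ and hence $\xxi\in L^2(\rd,\mu;\rd)$, while also making the double integrals above absolutely convergent. Non-emptiness of $\partial_S\mathcal{W}(\mu)$ then follows because $\partial\mathbf{W}(x,y)$ is non-empty at every point (by $\lambda$-convexity on the finite-dimensional space $\rdrd$) and admits a Borel measurable selection through a classical measurable-selection theorem applied to the closed-convex-valued multifunction $(x,y)\mapsto\partial\mathbf{W}(x,y)$.
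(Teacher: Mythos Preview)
Your argument is correct and follows essentially the same route as the paper: write $\mathcal{W}(\nu)-\mathcal{W}(\mu)$ via the product plan $\gamma\times\gamma$, apply the pointwise $\lambda$-subdifferential inequality for $\mathbf{W}$, and regroup the two linear terms after collapsing one marginal in each. You even supply the $L^2$ bound on $\xxi$, the Fubini justification, and the measurable-selection argument for non-emptiness, which the paper's proof leaves implicit.
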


\begin{proof}
Since $\mathbf{W}$ is $\lambda$-convex, the subdifferential inequality \eqref{def:subdiff} in this case can be written as follows
\begin{align}\label{eq:30}
\mathbf{W}(y_1,y_2)-&\mathbf{W}(x_1,x_2) \\\nonumber &\geq
\langle(\eeta^1(x_1,x_2),\,\eeta^2(x_1,x_2)),(y_1-x_1,y_2-x_2)\rangle+\frac\lambda
2\, |(y_1-x_1,y_2-x_2)|^2,
\end{align}
for every $(x_1,x_2,y_1,y_2)\in (\rd)^4$. Let
$\mu,\nu\in\PP_2(\rd)$ and $\gamma\in\Gamma(\mu,\nu)$. We show
that inequality \eqref{subdifferentialdefinition} holds.
Considering the measure $\gamma\times\gamma$, we can write
\[
\mathcal{W}(\nu)-\mathcal{W}(\mu)=
\frac12\int_{(\rd)^4}(\mathbf{W}(y_1,y_2)-\mathbf{W}(x_1,x_2))\,d(\gamma\times\gamma)(x_1,y_1,x_2,y_2).
\]
Hence, by \eqref{eq:30},
\begin{align*}
\mathcal{W}(\nu)\!-\!\mathcal{W}(\mu)\geq&\,\frac12
\int_{(\rd)^4}\!\!\left[\langle\eeta^1(x_1,x_2),(y_1-x_1)\rangle
\! + \!
\langle\eeta^2(x_1,x_2),(y_2-x_2)\rangle\right]\!d(\gamma\times\gamma)(x_1,y_1,x_2,y_2)\\
& +\frac\lambda 4
\int_{(\rd)^4}(|y_1-x_1|^2+|y_2-x_2|^2)\,d(\gamma\times\gamma)(x_1,y_1,x_2,y_2).
\end{align*}
The last term is $\frac\lambda 2\,\mathcal{C}^2(\mu,\nu;\gamma)$,
so that a change of variables gives
\begin{align*}
\mathcal{W}(\nu)-\mathcal{W}(\mu)\geq&\,\frac12\int_{\rd}\int_{\rdrd}\langle\eeta^1(x_1,x_2),(y_1-x_1)\rangle
\,d\gamma(x_1,y_1)\,d\mu(x_2)\\\vspace{6pt}
&+\frac12\int_{\rd}\int_{\rdrd}\langle\eeta^2(x_1,x_2),(y_2-x_2)\rangle\,d\gamma(x_2,y_2)\,d\mu(x_1)
+\frac\lambda 2\,\mathcal{C}^2(\mu,\nu;\gamma)\\\vspace{6pt}
=&\,\frac12\int_{\rd}\int_{\rdrd}\langle\eeta^1(x,z),(y-x)\rangle\,d\gamma(x,y)\,d\mu(z)\\\vspace{6pt}
&+\frac12\int_{\rd}\int_{\rdrd}\langle\eeta^2(z,x),(y-x)\rangle\,d\gamma(x,y)\,d\mu(z)
+\frac\lambda 2\,\mathcal{C}^2(\mu,\nu;\gamma)\\
=&\,\int_{\rdrd}\!\left\langle\frac12\int_{\rd}(\eeta^1(x,z)+\eeta^2(z,x))\,d\mu(z),(y-x)\right\rangle
d\gamma(x,y)\!+\!\frac\lambda 2\,\mathcal{C}^2(\mu,\nu;\gamma)
\end{align*}
as desired.
\end{proof}

In the case of a smooth interaction function $\mathbf W$,
there is a complete characterization of the strong subdifferential $\de_S\w(\mu)$
which is single valued.

\begin{pro}[\textbf{The smooth case}]\label{smooth}
Let $\mu\in\PP_2(\rd)$. If $\mathbf{W}\in C^1(\rdrd)$ satisfies
the assumptions \eqref{sym}, \eqref{convex}, and \eqref{growth},
then the strong Wasserstein subdifferential is a singleton and it
is of the form
\begin{equation}\label{smoothgradient}
\begin{aligned}
\partial_S \mathcal{W}(\mu)&=\left\{\int_{\rd}\nabla_1
\mathbf{W}(\cdot,y)\,d\mu(y)\right\}=\left\{\int_{\rd}\nabla_2\mathbf{W}(y,\cdot)\,d\mu(y)\right\}\\
&=\left\{\frac12\int_{\rd}\big(\nabla_1 \mathbf{W}(\cdot,y)+\nabla_2\mathbf{W}(y,\cdot)\big)\,d\mu(y)\right\},
\end{aligned}
\end{equation}
where $\nabla_1$ (resp. $\nabla_2$) are the gradients with respect
to the first-$d$ (second-$d$) variables of $\rdrd$.
\end{pro}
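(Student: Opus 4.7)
My plan has three steps: invoke Theorem \ref{inequality} with the unique Borel selection coming from the $C^1$ hypothesis to produce a candidate element of $\de_S\w(\mu)$; use the symmetry \eqref{sym} to identify the three displayed expressions with that candidate; and finally promote existence to the stronger singleton statement via a first-variation argument along maps of the form $x\mapsto x+\eps\mathbf{b}(x)$.

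\emph{Existence and identification.} Since $\mathbf{W}\in C^1(\rdrd)$, at every point $(x,y)$ the joint subdifferential $\partial\mathbf{W}(x,y)$ reduces to the singleton $\{(\nabla_1\mathbf{W}(x,y),\nabla_2\mathbf{W}(x,y))\}$, so the only Borel selection $(\eeta^1,\eeta^2)\in\partial\mathbf{W}$ is $(\nabla_1\mathbf{W},\nabla_2\mathbf{W})$. Theorem \ref{inequality} applied to this selection immediately gives
\[
\xxi_0(x):=\frac{1}{2}\int_{\rd}\bigl(\nabla_1\mathbf{W}(x,y)+\nabla_2\mathbf{W}(y,x)\bigr)\,d\mu(y)\in\de_S\w(\mu).
\]
Differentiating the symmetry $\mathbf{W}(x,y)=\mathbf{W}(y,x)$ yields $\nabla_1\mathbf{W}(x,y)=\nabla_2\mathbf{W}(y,x)$, which makes all three expressions on the right-hand side of \eqref{smoothgradient} coincide with $\xxi_0$.

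\emph{Uniqueness.} Let $\xxi\in\de_S\w(\mu)$ be arbitrary and fix a bounded Borel vector field $\mathbf{b}:\rd\to\rd$. For $\eps\in\R$ set $\mathbf{T}_\eps:=\id+\eps\mathbf{b}$, $\nu_\eps:=(\mathbf{T}_\eps)_\#\mu$, and consider the (not necessarily optimal) plan $\gamma_\eps:=(\id,\mathbf{T}_\eps)_\#\mu\in\Gamma(\mu,\nu_\eps)$. Since $\xxi$ is a strong subdifferential, \eqref{subdifferentialdefinition} applied to $\gamma_\eps$ together with $\Cost^2(\mu,\nu_\eps;\gamma_\eps)=\eps^2\int|\mathbf{b}|^2\,d\mu$ gives
\[
\w(\nu_\eps)-\w(\mu)\geq\eps\int_{\rd}\langle\xxi,\mathbf{b}\rangle\,d\mu+\frac{\lambda}{2}\eps^2\int_{\rd}|\mathbf{b}|^2\,d\mu.
\]
On the other hand, writing $\w(\nu_\eps)=\tfrac{1}{2}\iint\mathbf{W}(x+\eps\mathbf{b}(x),y+\eps\mathbf{b}(y))\,d\mu(x)\,d\mu(y)$ and expanding $\mathbf{W}$ to first order in $\eps$, then using $\nabla_2\mathbf{W}(x,y)=\nabla_1\mathbf{W}(y,x)$ and a relabeling $(x,y)\leftrightarrow(y,x)$ in the second term, one obtains $\w(\nu_\eps)-\w(\mu)=\eps\int_{\rd}\langle\xxi_0,\mathbf{b}\rangle\,d\mu+o(\eps)$. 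Dividing by $\eps$ and sending $\eps\to 0^+$ and $\eps\to 0^-$ forces $\int_{\rd}\langle\xxi_0-\xxi,\mathbf{b}\rangle\,d\mu=0$ for every bounded Borel $\mathbf{b}$, whence $\xxi=\xxi_0$ $\mu$-a.e.

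\emph{Main obstacle.} The delicate point is controlling the $o(\eps)$ remainder integrated against $\mu\times\mu$. The $C^1$ regularity together with $\lambda$-convexity \eqref{convex} and quadratic growth \eqref{growth} yields a linear gradient bound $|\nabla_i\mathbf{W}(x,y)|\leq C(1+|x|+|y|)$ for $i=1,2$ by a standard convex-analytic argument; since $\mathbf{b}$ is bounded and $\mu\in\PP_2(\rd)$, the integral-form Taylor remainder is dominated by an $L^1(\mu\times\mu)$ function, and continuity of $\nabla_i\mathbf{W}$ combined with dominated convergence then provides the needed $o(\eps)$ asymptotics rigorously.
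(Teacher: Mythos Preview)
Your proof is correct and follows essentially the same strategy as the paper: existence via Theorem~\ref{inequality}, identification of the three expressions via the symmetry relation $\nabla_1\mathbf{W}(x,y)=\nabla_2\mathbf{W}(y,x)$, and uniqueness by a first-variation argument along maps $\id+\eps\mathbf{b}$. The only minor technical difference is in justifying the limit as $\eps\to 0$: the paper exploits that $t\mapsto t^{-1}\big(\mathbf{W}(x+t\mathbf{s}(x),y+t\mathbf{s}(y))-\mathbf{W}(x,y)\big)-\tfrac{\lambda}{2t}\big(|(x+t\mathbf{s}(x),y+t\mathbf{s}(y))|^2-|(x,y)|^2\big)$ is monotone (by $\lambda$-convexity) and applies monotone convergence with general $\mathbf{s}\in L^2(\mu)$, whereas you use the linear gradient bound and dominated convergence with bounded $\mathbf{b}$ --- both routes are valid.
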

\begin{proof}
Since $\de \mathbf W(x,y)=\{\nabla\mathbf W(x,y)\}$, by Theorem \ref{inequality} and the symmetry
of $\mathbf W$ we have that the right hand sides of \eqref{smoothgradient} are contained
in $\de_S\w(\mu)$.

In order to prove the opposite inclusion, assume that $\xxi\in L^2(\rd,\mu;\rd)$ belongs to $\partial_S\mathcal{W}(\mu)$.
Let $\mathbf{s}\in L^2(\rd,\mu;\rd)$ be an arbitrary vector field,
$\nu=\mathbf{\mathbf{s}}_\#\mu$ and
$\mu_t=(\id+t\mathbf{s})_\#\mu$. Writing
\eqref{subdifferentialdefinition} in correspondence of the plan
\[
\gamma_t=(\id,\id+t\mathbf{s})_\#\mu
\]
between $\mu$ and $\mu_t$, we have
\[
\mathcal{W}(\mu_t)-\mathcal{W}(\mu)\geq\int_{\rdrd}\langle\xxi(x),y-x\rangle\,d\gamma_t(x,y)+\frac\lambda
2\, \Cost^2(\mu,\mu_t;\gamma_t).
\]
Hence, for every $t>0$
\begin{align*}
\frac1{2t}\int_{\rdrd}\left(\mathbf{W}(x+t\mathbf{s}(x),y+t\mathbf{s}(y))-\mathbf{W}(x,y)\right)&
\,d(\mu\times\mu)(x,y) \\ \nonumber & \geq\int_{\rd}\langle
\xxi(x),\mathbf{s}(x)\rangle \,d\mu(x)+\frac{\lambda}
2\,t\|\mathbf{s}\|^2_{L^2(\mu)},
\end{align*}
and, by a direct computation
\begin{align}\label{subdiffmap2}
\frac1{2t}\int_{\rdrd} \big(\mathbf{W}(x+t\mathbf{s}(x),&
\,y+t\mathbf{s}(y))-\mathbf{W}(x,y)\big) d(\mu\times\mu)(x,y)
\\ \nonumber \geq & \,\frac{\lambda}{4t}\int_{\rdrd}
\big(|(x+t\mathbf{s}(x),y+t\mathbf{s}(y))|^2-|(x,y)|^2\big)
d(\mu\times\mu)(x,y)
\\ \nonumber & +\int_{\rd}\langle
\xxi(x),\mathbf{s}(x)\rangle \,d\mu(x)-\lambda \int_{\rd}\langle
x,\mathbf{s}(x)\rangle \,d\mu(x)\,.
\end{align}
Since $\mathbf{W}$ is $\lambda$-convex, the map
\[
t\mapsto
\frac1t\left(\mathbf{W}(x+t\mathbf{s}(x),y+t\mathbf{s}(y))-\mathbf{W}(x,y)\right)-\frac\lambda{2t}\,
\big(|(x+t\mathbf{s}(x),y+t\mathbf{s}(y))|^2-|(x,y)|^2\big)
\]
is nondecreasing in $t$, for $t>0$. Taking advantage of the $C^1$
regularity and the quadratic growth of $\mathbf{W}$, by the
monotone convergence theorem, we can pass to the limit in
\eqref{subdiffmap2} as $t$ goes to $0$, obtaining
\begin{equation}\label{scalardouble}
\frac12\int_{\rdrd}\langle\nabla
\mathbf{W}(x,y),(\mathbf{s}(x),\mathbf{s}(y))\rangle\,d(\mu\times\mu)(x,y)\geq
\int_{\rd}\langle \xxi,\mathbf{s}\rangle\,d\mu.
\end{equation}
Since by the symmetry of $\mathbf{W}$ we have
$\nabla_1\mathbf{W}(x,y)=\nabla_2 \mathbf{W}(y,x)$ for any $x,y\in\rd$,
we can write
\begin{align*}
\frac12\int_{\rdrd}\!\!&\langle\nabla
\mathbf{W}(x,y),(\mathbf{s}(x),\mathbf{s}(y))\rangle\,d(\mu\times\mu)(x,y)\\
=&\frac12\int_{\rdrd}\!\!\!
\langle\nabla_1\mathbf{W}(x,y),\mathbf{s}(x)\rangle\,d(\mu\times\mu)(x,y)+\frac12\int_{\rdrd}\!\!\!
\langle\nabla_1\mathbf{W}(y,x),\mathbf{s}(y)\rangle\,d(\mu\times\mu)(x,y)\\
=&\int_{\rdrd}\!\!\langle\nabla_1
\mathbf{W}(x,y),\mathbf{s}(x)\rangle\,d(\mu\times\mu)(x,y).
\end{align*}
This way, \eqref{scalardouble} becomes
\[
\int_{\rd}\left\langle\int_{\rd}\nabla_1
\mathbf{W}(x,y)\,d\mu(y),\mathbf{s}(x)\right\rangle\,d\mu(x)\geq\int_{\rd}\langle
\xxi,\mathbf{s}\rangle\,d\mu,
\]
that is
\[
\int_{\rd}\left\langle\int_{\rd}\nabla_1
\mathbf{W}(x,y)\,d\mu(y)-\xxi(x),\mathbf{s}(x)\right\rangle\,d\mu(x)\geq 0.
\]
Since $\mathbf{s}\in L^2(\rd,\mu;\rd)$ is arbitrary we conclude
that $\xxi(x)=\int_{\rd}\nabla_1\mathbf{W}(x,y)\,d\mu(y)$ as
elements of $L^2(\rd,\mu;\rd)$.
\end{proof}

When we drop the assumption of $\mathbf{W}\in C^1(\rd\times\rd)$
the subdifferential of $\w$ is in general multivalued. In the next
Theorem, we characterize the element of minimal norm in the
sub\-di\-fferential of $\w$ at the point $\mu$, which is of the
form \eqref{belongs}.

\begin{remark}\rm
The property stating that every element of the subdifferential of
$\w$ at the point $\mu$ is of the form \eqref{belongs} for a
suitable Borel selection of the subdifferential of $\mathbf{W}$
could be in general very difficult and we do not know if it is
true.
\end{remark}

\begin{theorem}\label{minimalselectionchar}
Let $\mu\in\PP_2(\rd)$ and $\xxi=\partial^o\w(\mu)$. Then there exists a measurable
selection $(\eeta^1,\,\eeta^2)\in\partial \mathbf{W}$ such that
\begin{equation*}
\xxi(x)=\frac12\int_{\rd}(\eeta^1(x,y)+\eeta^2(y,x))\,d\mu(y).
\end{equation*}
\end{theorem}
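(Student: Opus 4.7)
My plan is to approximate $\mathbf{W}$ by smooth $\lambda$-convex functions, apply Proposition \ref{smooth} at the regularized level, and pass to the limit to extract the sought Borel selection of $\partial\mathbf{W}$.

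\textbf{Mollification.} Let $\rho_n$ be a standard mollifier on $\rdrd$ and set $\mathbf{W}_n:=\mathbf{W}*\rho_n$. Each $\mathbf{W}_n$ is $C^\infty$ and inherits \eqref{sym}, \eqref{convex} and \eqref{growth} with constants uniform in $n$, while $\mathbf{W}_n$ converges to $\mathbf{W}$ uniformly on compact subsets of $\rdrd$. Setting $\w_n(\mu):=\frac12\int\mathbf{W}_n\,d(\mu\times\mu)$, Proposition \ref{smooth} provides
\[
\xxi_n:=\partial^o\w_n(\mu)=\frac12\int\bigl(\eeta^1_n(\cdot,y)+\eeta^2_n(y,\cdot)\bigr)\,d\mu(y),
\]
where $(\eeta^1_n,\eeta^2_n)(x,y):=(\nabla_1\mathbf{W}_n(x,y),\nabla_2\mathbf{W}_n(x,y))$ is the smooth (single-valued) selection of $\partial\mathbf{W}_n$.

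\textbf{Compactness and limit identification.} From $\lambda$-convexity and quadratic growth I would derive the uniform linear bound $|\eeta|\leq C(1+|x|+|y|)$ valid for every $\eeta\in\partial\mathbf{W}_n(x,y)$, with $C$ independent of $n$. Since $\mu\in\PP_2(\rd)$, this makes $(\eeta^1_n,\eeta^2_n)$ bounded in $L^2(\mu\times\mu;\rdrd)$; extracting a subsequence yields $(\eeta^1_n,\eeta^2_n)\rightharpoonup(\bar\eeta^1,\bar\eeta^2)$ weakly. The linear operator $T:L^2(\mu\times\mu;\rdrd)\to L^2(\mu;\rd)$ defined by $T(\eeta)(x):=\frac12\int(\eeta^1(x,y)+\eeta^2(y,x))\,d\mu(y)$ is bounded via Jensen's inequality, so $\xxi_n\rightharpoonup T(\bar\eeta^1,\bar\eeta^2)$ in $L^2(\mu;\rd)$. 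In parallel, the uniform convergence $\mathbf{W}_n\to\mathbf{W}$ on compact sets together with the quadratic-growth control produces Mosco convergence $\w_n\to\w$, and the convergence theory for subdifferentials of $\lambda$-convex functionals in Wasserstein spaces from \cite[Chapter 10]{AGS} yields $\xxi_n\rightharpoonup\xxi$ weakly in $L^2(\mu;\rd)$. Uniqueness of the weak limit then gives $\xxi=T(\bar\eeta^1,\bar\eeta^2)$.

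\textbf{Main obstacle.} The delicate step is to verify that $(\bar\eeta^1(x,y),\bar\eeta^2(x,y))\in\partial\mathbf{W}(x,y)$ for $\mu\times\mu$-almost every $(x,y)$. The difficulty is that $\mu\times\mu$ may concentrate on Lebesgue-negligible sets, so the familiar a.e.\ pointwise convergence of $\nabla\mathbf{W}_n$ to selections of $\partial\mathbf{W}$ with respect to Lebesgue measure does not transfer automatically. I would apply Mazur's lemma in $L^2(\mu\times\mu;\rdrd)$ to obtain convex combinations of $(\eeta^1_n,\eeta^2_n)$ converging strongly to $(\bar\eeta^1,\bar\eeta^2)$, pass to a $\mu\times\mu$-a.e.\ pointwise convergent subsequence, and invoke the graph-closedness of the maximal monotone operator associated to the $\lambda$-convex function $\mathbf{W}$, a consequence of the Mosco convergence $\mathbf{W}_n\to\mathbf{W}$ via Attouch-type theorems: the monotonicity inequality $\langle a-b,p-q\rangle\geq\lambda|p-q|^2$ valid within each graph $\partial\mathbf{W}_n$ persists under strong limits. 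Finally, a Borel modification on the resulting $\mu\times\mu$-null exceptional set, furnished by the Kuratowski--Ryll-Nardzewski measurable selection theorem applied to the Borel multifunction $\partial\mathbf{W}$, produces the required Borel selection $(\eeta^1,\eeta^2)\in\partial\mathbf{W}$ satisfying $\xxi(x)=\frac12\int(\eeta^1(x,y)+\eeta^2(y,x))\,d\mu(y)$.
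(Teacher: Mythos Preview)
Your overall strategy---regularize $\mathbf{W}$, invoke Proposition~\ref{smooth} at the smooth level, and pass to the limit---is the same as the paper's. The Mazur step you propose for showing that the weak $L^2(\mu\times\mu)$ limit $(\bar\eeta^1,\bar\eeta^2)$ takes values in $\partial\mathbf{W}$ is also essentially sound: the $\lambda$-subdifferential inequality for each $\mathbf{W}_n$ is affine in the subgradient, so it survives convex combinations, and local uniform convergence $\mathbf{W}_n\to\mathbf{W}$ lets you pass to the limit pointwise. The paper reaches the same conclusion by a different device: it pushes $\mu_n\times\mu_n$ forward to $\nu_n=(\id,\nabla\mathbf{W}_n)_\#(\mu_n\times\mu_n)$, passes to a narrow limit $\nu$, uses the Kuratowski inclusion $\mathrm{supp}(\nu)\subset\mathrm{graph}(\partial\mathbf{W})$, disintegrates, and takes barycenters.

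The genuine gap is the sentence ``the convergence theory for subdifferentials of $\lambda$-convex functionals in Wasserstein spaces from \cite[Chapter 10]{AGS} yields $\xxi_n\rightharpoonup\xxi$ weakly in $L^2(\mu;\rd)$.'' There is no such black-box statement in \cite{AGS}. Mosco convergence $\w_n\to\w$ gives, via Attouch-type results, graph convergence of the subdifferentials, not convergence of the minimal-norm element at a \emph{fixed} point $\mu$. To upgrade graph convergence to $\partial^o\w_n(\mu)\rightharpoonup\partial^o\w(\mu)$ you would need an inequality of the form $\|\xxi_n\|_{L^2(\mu)}\le|\partial\w|(\mu)$ so that any weak cluster point, once shown to lie in $\partial\w(\mu)$, is forced to be the minimal one. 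Such an inequality holds when $\w_n$ is the Moreau--Yosida approximation \emph{of the functional $\w$ in the Wasserstein metric}; it does not follow from mollifying, or even Moreau--Yosida approximating, the kernel $\mathbf{W}$ and then forming the interaction energy---these are different regularizations of $\w$.

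The paper confronts exactly this difficulty and does not keep $\mu$ fixed. It uses the Moreau--Yosida approximation of $\mathbf{W}$, and for each $n$ takes $\mu_n$ to be a JKO resolvent of $\w_{h(n)}$ at scale $\tau(n)\to0$, chosen by a diagonal argument (Lemma~\ref{prop:gamma conv}). The convergence of the associated subgradients to $\partial^o\w(\mu)$ then comes from Proposition~\ref{convergetominimalselection}, which in turn rests on the convergence of barycenters of rescaled optimal plans established in \cite[Theorem~10.3.10]{AGS}, together with the $\Gamma$-liminf inequality of Proposition~\ref{gammaconvergenceproposition}. This machinery---resolvents, rescaled plans, barycentric projection, diagonal extraction---is precisely what your one-line citation is standing in for, and it is the technical heart of the proof.
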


The proof of Theorem \ref{minimalselectionchar} needs several
preliminary results. We will make use of a sequence of regularized
functionals $\mathcal{W}_n$. We recall that the  {\it
Moreau-Yosida} approximation of the function $\mathbf{W}$ is
defined as
\begin{equation}\label{moreau}
\mathbf{W}_n(x,y):=\inf_{(v,w)\in\rdrd} \Big\{\mathbf{W}(v,w)+\frac
n2\,\left|(x-v,y-w)\right|^2\Big\}.
\end{equation}
We have $\mathbf{W}_n(x,y)\leq\mathbf{W}(x,y)$ for every $(x,y)\in\rd\times\rd$,
$\mathbf{W}_n\in C^{1,1}(\rd\times\rd)$ and the sequence $\{\mathbf{W}_n\}_{n\in\N}$
converges pointwise and monotonically
to $\mathbf{W}$ as $n\to\infty$.

If the $\lambda$-convexity property of $\mathbf{W}$ is satisfied,
there exists a constant $K>0$ such that
\begin{equation}\label{lambdaconvexity}
\mathbf{W}(x,y)\geq -K(1+|x|^2+|y|^2),
\end{equation}
and in this case we can show the corresponding bound for
$\mathbf{W}_n$, uniformly in $n$.

\begin{pro}\label{negativequadratic} If $\mathbf{W}$ enjoys the $\lambda$-convexity
assumption \eqref{convex}, there exist $\bar{n}\in\mathbb{N}$ and
a constant $\bar{K}>0$ such that for all $n>\bar{n}$
\[
\mathbf{W}_n(x,y)\geq -\bar{K}(1+|x|^2+|y|^2)\, .
\]
\end{pro}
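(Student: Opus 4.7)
The plan is to insert the quadratic lower bound \eqref{lambdaconvexity} directly into the Moreau--Yosida infimum \eqref{moreau} and then minimize the resulting expression in closed form. Using $\mathbf{W}(v,w)\geq -K(1+|v|^2+|w|^2)$ and the fact that the coupling in $\frac{n}{2}|(x-v,y-w)|^2 = \frac{n}{2}|x-v|^2 + \frac{n}{2}|y-w|^2$ splits between the two pairs of variables, I would obtain
\[
\mathbf{W}_n(x,y) \;\geq\; -K \;+\; \inf_{v\in\rd}\Big\{-K|v|^2+\tfrac{n}{2}|x-v|^2\Big\} \;+\; \inf_{w\in\rd}\Big\{-K|w|^2+\tfrac{n}{2}|y-w|^2\Big\},
\]
so the problem reduces to a one-variable quadratic minimization.

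Next, I would note that each scalar infimum is a quadratic function with leading coefficient $(n-2K)/2$, hence it is bounded from below (and attained) as soon as $n>2K$. A direct computation of the unique critical point gives
\[
\inf_{v\in\rd}\Big\{-K|v|^2+\tfrac{n}{2}|x-v|^2\Big\} \;=\; -\frac{Kn}{n-2K}\,|x|^2,
\]
and symmetrically in $w$. Substituting back yields, for every $n>2K$,
\[
\mathbf{W}_n(x,y) \;\geq\; -K \;-\; \frac{Kn}{n-2K}\bigl(|x|^2+|y|^2\bigr).
\]

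Finally, to make the constant uniform in $n$, I would fix $\bar n$ so large that $n-2K\geq n/2$ for all $n>\bar n$ (e.g.\ $\bar n := 4K$), which forces $\frac{Kn}{n-2K}\leq 2K$. Setting $\bar K := 2K$ then gives $\mathbf{W}_n(x,y)\geq -\bar K(1+|x|^2+|y|^2)$ for all $n>\bar n$, as required. The argument is essentially a routine coercivity check, and the only thing to be a little careful about is ensuring that the separated quadratic in $v$ (and in $w$) is genuinely bounded from below, which is exactly the role of the threshold $n>2K$.
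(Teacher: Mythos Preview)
Your proof is correct and follows essentially the same route as the paper: insert the quadratic lower bound \eqref{lambdaconvexity} into the Moreau--Yosida infimum, minimize the resulting quadratic explicitly (your critical point $v=\tfrac{n}{n-2K}x$ matches the paper's), and then observe that the coefficient $\tfrac{Kn}{n-2K}$ is bounded by $2K$ for $n$ large. The only cosmetic difference is that you split the minimization into two separate one-variable problems in $v$ and $w$, whereas the paper treats $(v,w)$ jointly; your version is arguably a touch cleaner and gives an explicit threshold $\bar n = 4K$ (which you may want to replace by $\lceil 4K\rceil$ to guarantee an integer), while the paper simply asserts existence of a large enough $\bar n$.
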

\begin{proof}
 Indeed, by \eqref{moreau} and
the  estimate \eqref{lambdaconvexity} there holds
\begin{equation}\label{moreauestimate}
\mathbf{W}_n(x,y)\geq\inf_{(v,w)\in\rdrd}
\Big\{-K(1+|v|^2+|w|^2)+\frac n2\,\left|(x-v,y-w)\right|^2\Big\}.
\end{equation}
Here we compute the minimum. The gradient of the right hand side
is \[(-2Kv-n(x-v),-2Kw-n(y-w)),\] and it vanishes when
$(v,w)=\frac{n}{n-2K}\,(x,y)$. Substituting in
\eqref{moreauestimate} we get
\[
\mathbf{W}_n(x,y)\ge
-K\left(1+\frac{n^2}{(n-2K)^2}\,(|x|^2+|y|^2)+\frac{4nK^2}{2(n-2K)^2}\,(|x|^2+|y|^2)\right).
\]
If we chose for instance $\bar{K}=2K$, it is then clear that there
exists a large enough $\bar{n}$ (depending only on $K$) such that
the desired inequality holds for any $n>\bar{n}$.
\end{proof}

We define the approximating interaction functionals
\begin{equation}\label{approxfunc}
\mathcal{W}_n(\mu):=\frac12\int_{\rdrd}\mathbf{W}_n(x,y)\,d(\mu\times\mu)(x,y).
\end{equation}
\begin{remark}[\textbf{Semicontinuity properties of $\mathcal{W}$}]\label{lsc}\rm
Since $\mathbf{W}$ might enjoy a negative quadratic behavior at
infinity, it is not true that $\mathbf{W}$ is lower semicontinuous
also with respect to the narrow convergence. By the way, it is
shown in \cite[¤2]{CDFLS} that one can choose $\tau_0$ small
enough (depending only on $\mathbf{W}$) such that for any
$\tau<\tau_0$ and for any $\mu\in\PP_2(\rd)$, the functional
\begin{equation}\label{moreauwasserstein}
\nu\mapsto \mathcal{W}(\nu)+\frac1{2\tau}\,d_W^2(\nu,\mu),
\end{equation}
is lower semicontinuous with respect to the narrow convegence.
Moreover, for $\tau<\tau_0$, minimizers do exist for
\eqref{moreauwasserstein}. The arguments of \cite{CDFLS} are given
for a function $\mathbf{W}$ such that $\mathbf{W}(x,x)=0$ for any
$x$ and assumption {\it iv)} holds. They can be adapted in a
straightforward way if these hypotheses are omitted. Moreover, in
the case of the approximating functionals $\mathcal{W}_n$ defined
in \eqref{approxfunc}, we can choose $\tau_0$  independently of
$n$. Indeed, since Proposition \ref{negativequadratic} gives the
bound $\mathbf{W}_n(x,y)\ge -\bar{K}(1+|x|^2+|y|^2)$, for some
$\bar{K}>0$, it is enough to choose $\tau_0$ small enough such
that
\[
\nu\mapsto\int_{\rdrd}-\bar{K}(1+|x|^2+|y|^2)\,d\nu\times\nu(x,y)+\frac1{2\tau}\,d_W^2(\mu,\nu)
\]
is narrowly lower semicontinuous.
\end{remark}

We prove the following more general lower semicontinuity property.

\begin{pro}\label{gammaconvergenceproposition} Let $\tau_0$ be
as in {\rm Remark \ref{lsc}}. Let $\tau<\tau_0$ and let
$\nu\in\PP_2(\rd)$. For any $\mu\in\PP_2(\rd)$ and for any
sequence $\{\mu_n\}_{n\in\N}$ such that $\mu_n$ narrowly converges
to $\mu$ and $\sup_n\int_{\rd}|x|^2\,d\mu_n(x)<+\infty$, there
holds
\[
\mathcal{W}(\mu)+\frac1{2\tau}\,d_W^2(\mu,\nu)\leq\liminf_{n}\left(\mathcal{W}_n(\mu_n)+\frac1{2\tau}\,d_W^2(\mu_n,\nu)\right).
\]
Moreover, for $\mu\in\PP_2(\rd)$ there holds
$\mathcal{W}_n(\mu)\to\mathcal{W}(\mu)$.
\end{pro}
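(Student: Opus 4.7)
The plan is to establish the pointwise convergence $\mathcal{W}_n(\mu)\to\mathcal{W}(\mu)$ first, and then deduce the $\liminf$ inequality by combining the monotonicity of the Moreau--Yosida approximation with the uniform narrow lower semicontinuity provided by Remark \ref{lsc}.

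For the pointwise convergence I would use that $\mathbf{W}_n(x,y)\nearrow \mathbf{W}(x,y)$ monotonically as $n\to\infty$ (standard property of the Moreau--Yosida regularization), together with the uniform lower bound from Proposition \ref{negativequadratic}: for all $n\geq\bar{n}$, the functions $\mathbf{W}_n(x,y)+\bar{K}(1+|x|^2+|y|^2)$ form a nonnegative, pointwise increasing sequence converging to $\mathbf{W}(x,y)+\bar{K}(1+|x|^2+|y|^2)$. Since $\mu\in\PP_2(\rd)$ makes the quadratic correction integrable against $\mu\times\mu$, the monotone convergence theorem yields convergence of the integrals, and subtracting the finite correction gives $\mathcal{W}_n(\mu)\to\mathcal{W}(\mu)$.

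For the $\liminf$ inequality, I would fix $m\in\mathbb{N}$ and exploit the monotonicity of $\{\mathbf{W}_n\}$: for every $k\geq m$ one has $\mathbf{W}_k\geq\mathbf{W}_m$ pointwise, hence
\[
\mathcal{W}_k(\mu_k)+\frac{1}{2\tau}\,d_W^2(\mu_k,\nu)\;\geq\;\mathcal{W}_m(\mu_k)+\frac{1}{2\tau}\,d_W^2(\mu_k,\nu).
\]
Since $\tau<\tau_0$ with $\tau_0$ independent of $m$, Remark \ref{lsc} guarantees that the functional $\sigma\mapsto\mathcal{W}_m(\sigma)+\frac{1}{2\tau}\,d_W^2(\sigma,\nu)$ is narrowly lower semicontinuous. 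As $\mu_k\to\mu$ narrowly, passing to $\liminf_{k\to\infty}$ yields
\[
\liminf_{k\to\infty}\left(\mathcal{W}_k(\mu_k)+\frac{1}{2\tau}\,d_W^2(\mu_k,\nu)\right)\;\geq\;\mathcal{W}_m(\mu)+\frac{1}{2\tau}\,d_W^2(\mu,\nu).
\]
Letting $m\to\infty$ and invoking the pointwise convergence established in the previous paragraph then completes the proof.

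The main obstacle is really the interplay between the possible negativity of the potentials (which precludes a direct Fatou argument on $\mathcal{W}$ itself) and the regularization: both steps hinge on the uniform quadratic lower bound from Proposition \ref{negativequadratic}, which in the first step enables monotone convergence and in the second step makes $\tau_0$ (hence the narrow lower semicontinuity of the regularized functionals) independent of the regularization index. Note that we never need $\mu_k\to\mu$ in $\PP_2(\rd)$; only narrow convergence plus uniform bounded second moments, which is exactly the regime covered by Remark \ref{lsc}. Once these uniform estimates are in place, the proof reduces to the familiar pattern of combining monotonicity in the regularization parameter with lower semicontinuity in the measure variable.
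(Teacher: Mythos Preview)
Your proposal is correct and follows essentially the same approach as the paper: fix the regularization index, use the monotonicity $\mathbf{W}_n\ge\mathbf{W}_m$ for $n\ge m$ to reduce to a fixed functional, apply the narrow lower semicontinuity from Remark~\ref{lsc} (with $\tau_0$ uniform in $m$), and then send $m\to\infty$ via monotone convergence. Your write-up is in fact slightly more explicit than the paper's in justifying the monotone convergence step through the quadratic shift from Proposition~\ref{negativequadratic}.
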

\begin{proof}
Since $\mathbf{W}_n\ge\mathbf{W}_k$ if $n\ge k$, by Remark
\ref{lsc} we have
\[\begin{aligned}
\liminf_{n\to\infty}\left(\mathcal{W}_n(\mu_n)+\frac1{2\tau}\,d_W^2(\mu_n,\nu)\right)&\ge
\liminf_{n\to\infty}\left(\mathcal{W}_k(\mu_n)+\frac1{2\tau}\,d_W^2(\mu_n,\nu)\right)\\&\ge
\mathcal{W}_k(\mu)+\frac1{2\tau}\,d_W^2(\mu,\nu)
\end{aligned}\]
for any fixed $k\in\mathbb{N}$. Now we shall pass to the limit as
$k\to\infty$. Notice that $\mathbf{W}_k \nearrow \mathbf{W}$
pointwise and monotonically, and thus by the monotone convergence
theorem, $\mathcal{W}_k(\mu)$ converges to $\mathcal{W}(\mu)$.
Both statements are proven.
\end{proof}

We recall a suitable notion of convergence of a sequence of vector
fields $\xxi_n\in L^2(\rd,\mu_n;\rd)$.
\begin{defi}\label{strongconvergencedefinition}
Let $\mu_n$ narrowly convergent to $\mu$ and let $\xxi_n\in
L^2(\rd,\mu_n;\rd)$.
We say that $\xxi_n$ weakly converges to
$\xxi\in L^2(\rd,\mu;\rd)$ if
\begin{equation}\label{wc}
\int_{\rd}\langle \xxi_n,\zeta\rangle\,d\mu_n\to\int_{\rd}\langle
\xxi,\zeta\rangle\,d\mu, \qquad \forall \zeta\in
C^\infty_c(\rd;\rd).
\end{equation}
We say that the convergence is strong if \eqref{wc} holds and
\[
\int_{\rd}|\xxi_n|^2\,d\mu_n\to\int_{\rd}|\xxi|^2\,d\mu.
\]
\end{defi}
\begin{remark}\label{metrizable} \rm
Consider the set $\mathcal{M}_d$ of vector measures of the form
$\xxi\mu$, with $\mu\in\PP_2(\rd)$ and $\xxi\in L^2(\rd,\mu;\rd)$.
Reasoning as in \cite[¤5.1]{AGS}, we know that the
weak convergence \eqref{wc} is metrizable on every subset
$\mathcal{A}$ of $\mathcal{M}_d$ such that
\[
\sup_{\mathcal{A}}\int_{\rd}|\xxi|\,d\mu<+\infty.
\]
Moreover, by \cite[Theorem 5.4.4]{AGS}, if
$\mathcal{A}\subset\mathcal{M}_d$ is such that
\[
\sup_{\mathcal{A}}\int_{\rd}|\xxi|^2\,d\mu<+\infty,
\]
then $\mathcal{A}$ is also compact with respect to the weak
convergence \eqref{wc}.
\end{remark}

We also need to define the barycentric projection.

\begin{defi}[\textbf{Disintegration and barycenter}]\label{barycenterdefinition}
Given $\beta\in\Gamma(\mu,\nu)$, we denote by $\beta_x$
the Borel family of measures over $\PP(\rd)$
such that $\beta=\int_{\rd}\beta_x\,d\mu(x)$,
which disintegrates $\beta$ with respect to $\mu$.
The notation above means that the
integral of a Borel function $\varphi:\rd\times\rd\to\R$ such that $\varphi \in L^1(\beta)$, can be sliced as
\[
\int_{\rdrd}\varphi(x,y)\,d\beta(x,y)=\int_{\rd}\int_{\rd}\varphi(x,y)\,d\beta_x(y)\,d\mu(x).
\]
The barycentric projection of $\beta\in\Gamma(\mu,\nu)$ is defined by
\begin{equation*}
    \bar\beta(x):=\int_{\rd} y\,d\beta_x(y).
\end{equation*}
\end{defi}
For more detail about disintegration see \cite[Theorem 2.28]{AFP}.

We can prove the following simple

\begin{pro}\label{planconvergence}
Let $\{\mu_n\}\subset\PP_2(\rd)$, $\{\nu_n\}\subset\PP_2(\rd)$
be sequences with uniformly bounded second moments and narrowly convergent to $\mu$
and $\nu$ respectively.
For every choice $\gamma_n\in\Gamma_o(\mu_n,\nu_n)$,
we have that the sequence $\{\gamma_n\}$ is tight and every limit point
with respect to the narrow convergence in $\PP(\rdrd)$
belongs to $\Gamma_o(\mu,\nu)$.
Moreover, if $\gamma$ is a limit point
and $\gamma_{n_k}$ is a subsequence narrowly convergent to $\gamma$,
then
\begin{equation*}
\bar{\gamma}_{n_k}\to\bar{\gamma}\quad\mbox{weakly in the sense of
{\rm Definition \ref{strongconvergencedefinition}} as }k\to +\infty.
\end{equation*}
\end{pro}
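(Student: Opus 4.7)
The plan is to handle the three assertions --- tightness of $\{\gamma_n\}$, optimality of narrow limits, and weak convergence of barycenters --- in succession, with the second being the delicate one.

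First, for the tightness, the uniform bound on second moments together with Markov's inequality makes $\{\mu_n\}$ and $\{\nu_n\}$ each tight. Tightness of $\{\gamma_n\}$ then follows from the marginal inequality
\[
\gamma_n\bigl((K_1\times K_2)^c\bigr) \leq \mu_n(\rd\setminus K_1) + \nu_n(\rd\setminus K_2)
\]
for compact $K_1, K_2 \subset \rd$, and Prokhorov's theorem supplies narrowly convergent subsequences. For any narrow limit $\gamma$, continuity of the projections under narrow convergence gives $\pi^1_\#\gamma = \mu$ and $\pi^2_\#\gamma = \nu$, so $\gamma \in \Gamma(\mu,\nu)$; moreover $\gamma \in \PP_2(\rdrd)$ since $\int(|x|^2+|y|^2)\,d\gamma$ coincides with the sum of the second moments of $\mu$ and $\nu$.

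The key and hardest step is to show $\gamma \in \Gamma_o(\mu,\nu)$. Since under the present hypotheses $\mu_n,\nu_n$ need not converge in $d_W$, the cost $d_W^2(\mu_n,\nu_n)$ may fail to converge to $d_W^2(\mu,\nu)$ (mass can escape to infinity), and passing to the limit directly in the transport cost fails in general. The clean way around this is the stability of cyclical monotonicity: each $\mathrm{supp}(\gamma_n)$ is $c$-cyclically monotone for $c(x,y)=|x-y|^2$. Given any finite family $(x_i,y_i)_{i=1}^N \subset \mathrm{supp}(\gamma)$, every open neighborhood of each $(x_i,y_i)$ has positive $\gamma$-mass, hence eventually positive $\gamma_{n_k}$-mass by the standard inequality $\liminf_k \gamma_{n_k}(U) \geq \gamma(U)$ on open $U$; thus one may pick $(x_i^k,y_i^k)\in\mathrm{supp}(\gamma_{n_k})$ with $(x_i^k,y_i^k)\to(x_i,y_i)$ as $k\to\infty$. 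Passing to the limit in the cyclical monotonicity inequality for each permutation shows that $\mathrm{supp}(\gamma)$ is itself $c$-cyclically monotone, and then the standard equivalence with optimality in $\PP_2$ (see \cite[Chapter 6]{AGS}) yields $\gamma\in\Gamma_o(\mu,\nu)$. I expect this to be the main obstacle, and cyclical monotonicity is the right tool because it reduces the question of optimality to a purely geometric property which is stable under narrow convergence.

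Finally, for the barycentric convergence, testing against $\zeta\in C_c^\infty(\rd;\rd)$ and using the disintegration gives
\[
\int_{\rd}\langle\bar\gamma_{n_k}(x),\zeta(x)\rangle\,d\mu_{n_k}(x) = \int_{\rdrd}\langle y,\zeta(x)\rangle\,d\gamma_{n_k}(x,y),
\]
and the analogous identity for $\gamma$. The integrand is continuous but grows linearly in $y$, so it is not admissible for narrow convergence directly. Truncating with a smooth cut-off $\chi_R(|y|)$ equal to $1$ for $|y|\leq R$ and vanishing for $|y|\geq R+1$, the function $(x,y)\mapsto\langle y,\zeta(x)\rangle\chi_R(|y|)$ is bounded continuous, so its integrals converge under narrow convergence of $\gamma_{n_k}$; the tail
\[
\Bigl|\int_{\rdrd}(1-\chi_R(|y|))\langle y,\zeta(x)\rangle\,d\gamma_n(x,y)\Bigr| \leq \frac{\|\zeta\|_\infty}{R}\int_{\rd}|y|^2\,d\nu_n(y)
\]
is controlled uniformly in $n$ by the bounded second moments of $\{\nu_n\}$, and the same bound applies to $\gamma$. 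Sending $k\to\infty$ at fixed $R$ and then $R\to\infty$ yields the claimed weak convergence $\bar\gamma_{n_k}\to\bar\gamma$ in the sense of Definition \ref{strongconvergencedefinition}.
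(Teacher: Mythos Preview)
Your proposal is correct and follows the same three-part structure as the paper's proof (tightness, optimality of limits, barycenter convergence). The paper's argument is terser: it defers tightness and optimality to \cite[Proposition~7.1.3]{AGS} and handles the barycenter step by invoking \cite[Lemma~5.1.7]{AGS} (uniform integrability of continuous functions with sub-quadratic growth under narrow convergence with bounded second moments), whereas you unpack both of these explicitly---the cyclical-monotonicity stability argument for optimality, and the cut-off/tail-estimate for the linearly growing integrand---which is fine and arguably more self-contained.
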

\begin{proof}
The tightness and optimality are contained in \cite[Proposition
7.1.3]{AGS}.  Let $(\gamma_n)_x$ be the disintegration of
$\gamma_n$ with respect to $\mu_n$ and $\gamma_x$ be the
disintegration of $\gamma$ with respect to $\mu$.
Let $\zeta\in C^\infty_c(\rd;\rd)$ and
$f:\rdrd\to\R$ the function defined by $f(x,y)=\langle \zeta(x),y \rangle$. Since $f$ is continuous and
satisfies $|f(x,y)|\leq C|y|$ for every $(x,y)\in\rdrd$ and $\sup_n \int_{\rdrd}(|x|^2 + |y|^2)\, d\gamma_n(x,y)<+\infty$, by
\cite[Lemma 5.1.7]{AGS} we have that $\int_{\rdrd}f(x,y)\, d\gamma_n(x,y) \to \int_{\rdrd}f(x,y)\, d\gamma(x,y)$ as $n\to +\infty$.
Using this property and the definition of barycenter, we have
\[\begin{aligned}
\int_{\rd}\langle \zeta ,\bar{\gamma}_n\rangle \,d\mu_n&=\int_{\rd}\langle \zeta(x) , \int_{\rd}y\,d(\gamma_n)_x(y) \rangle \,d\mu_n(x)\\
&= \int_{\rdrd}\langle \zeta(x),y\rangle\,d\gamma_n(x,y)\to
\int_{\rdrd}\langle \zeta(x),y\rangle \,d\gamma(x,y)\\
&=\int_{\rd}\langle \zeta(x),\int_{\rd}y\rangle \,d\gamma_x(y)\,d\mu(x)
=\int_{\rd}\langle \zeta,\bar{\gamma}\rangle \,d\mu.
\end{aligned}
\]
\end{proof}

We recall a definition from \cite[Chapter 10]{AGS}.

\begin{defi}[\textbf{Rescaled plan}]\label{rescaled} Let
$\tau<\tau_0$ (as in {\rm Remark \ref{lsc}}). Given
$\mu\in\PP_2(\rd)$, let
\[
\mu_\tau=\mathrm{argmin}
\left\{\mathcal{W}(\nu)+\frac1{2\tau}d_W(\nu,\mu):\nu\in\PP_2(\rd)\right\}.
\]
Given $\hat{\gamma}_\tau\in\Gamma_o(\mu_\tau,\mu)$, we define the
rescaled plan as
\[
\gamma_\tau:=\left(\pi^1,\frac{\pi^2-\pi^1}{\tau}\right)_\#\hat{\gamma}_\tau.
\]
\end{defi}

Next we introduce an abstract result about approximation of the
minimal selection in the subdifferential of $\mathcal{W}$. The
argument is indeed a direct consequence of the analysis in
\cite[¤10.3]{AGS}, but requires the concept of plan
subdifferential. Since this is a technical definition, we prefer
to postpone a discussion at the end of the paper. Therefore, the
proof of the following proposition is given in the appendix.

\begin{pro}\label{convergetominimalselection}
Let $\mu$, $\mu_\tau$ and $\gamma_\tau$ be as in {\rm Definition
\ref{rescaled}}. Then $\mu_\tau\to\mu$ in $\PP_2(\rd)$ as $\tau\to
0$. Moreover, denoting by $\bar\gamma_\tau$ the barycenter of
$\gamma_\tau$, we have that $\bar\gamma_\tau\in\partial_S
\mathcal{W}(\mu_\tau)$ and
\[
\bar{\gamma}_\tau\to\partial^o\mathcal{W}(\mu)\quad\mbox{strongly
in the sense of {\rm Definition \ref{strongconvergencedefinition}}
as }\tau\to 0.
\]
\end{pro}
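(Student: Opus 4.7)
The plan follows the classical variational approach to the Moreau-Yosida approximation in Wasserstein spaces developed in \cite[Chapter 10]{AGS}, adapted to our $\lambda$-convex interaction functional, and splits naturally into three steps.

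\emph{Step 1 (convergence $\mu_\tau\to\mu$).} Testing the minimality of $\mu_\tau$ against the competitor $\nu=\mu$ yields the Moreau-Yosida bound
\[
\mathcal{W}(\mu_\tau)+\frac{1}{2\tau}d_W^2(\mu_\tau,\mu)\leq\mathcal{W}(\mu).
\]
Combining this with the quadratic lower bound \eqref{lambdaconvexity} for $\mathcal{W}(\mu_\tau)$ and a triangle inequality to control the second moment of $\mu_\tau$ in terms of $d_W^2(\mu_\tau,\mu)$ and the second moment of $\mu$, I would obtain $d_W^2(\mu_\tau,\mu)=O(\tau)$ as $\tau\to 0$. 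This gives $\mu_\tau\to\mu$ in $\PP_2(\rd)$ and, via the lower semicontinuity of $\mathcal{W}$ from Proposition \ref{firstproperties}, also $\mathcal{W}(\mu_\tau)\to\mathcal{W}(\mu)$.

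\emph{Step 2 (strong subdifferentiality of $\bar\gamma_\tau$).} For arbitrary $\nu\in\PP_2(\rd)$ and $\sigma\in\Gamma(\mu_\tau,\nu)$, I would use the interpolant $\mu_\tau^t=((1-t)\pi^1+t\pi^2)_\#\sigma$ as a competitor in the minimization defining $\mu_\tau$. Gluing $\sigma$ with the optimal plan $\hat{\gamma}_\tau\in\Gamma_o(\mu_\tau,\mu)$ over their common marginal $\mu_\tau$ produces a transport plan between $\mu_\tau^t$ and $\mu$ whose cost expands as
\[
d_W^2(\mu_\tau,\mu)-2\tau t\int_{\rdrd}\langle\bar\gamma_\tau(x),y-x\rangle\,d\sigma(x,y)+O(t^2),
\]
after using the identity $\bar\gamma_\tau(x)=\tau^{-1}(\bar{\hat{\gamma}}_\tau(x)-x)$. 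Inserting this upper bound on $d_W^2(\mu_\tau^t,\mu)$ into the minimality property of $\mu_\tau$ and using the $\lambda$-convexity \eqref{geoconv} for $\mathcal{W}(\mu_\tau^t)$, dividing by $t$ and letting $t\to 0^+$ produces \eqref{subdifferentialdefinition} at $\mu_\tau$ with the vector field $\bar\gamma_\tau$, valid for every $\nu$ and every $\sigma\in\Gamma(\mu_\tau,\nu)$. This is the main obstacle: the arbitrariness of $\sigma$ rules out the simpler transport-map variation of Proposition \ref{smooth} and requires the plan-subdifferential formalism of AGS, whence the fully rigorous implementation is deferred to the appendix.

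\emph{Step 3 (strong convergence).} Jensen's inequality applied to the disintegration of $\gamma_\tau$ gives
\[
\|\bar\gamma_\tau\|_{L^2(\rd,\mu_\tau;\rd)}^2 \leq \int_{\rdrd}|y|^2\,d\gamma_\tau(x,y) = \frac{d_W^2(\mu_\tau,\mu)}{\tau^2}.
\]
Applying Step 2 with $\nu=\mu$ and $\sigma=\hat{\gamma}_\tau$ and using the identity $\int\langle\bar\gamma_\tau(x),y-x\rangle\,d\hat{\gamma}_\tau(x,y)=\tau\|\bar\gamma_\tau\|_{L^2(\mu_\tau)}^2$ one obtains the Moreau-Yosida asymptotic $\limsup_{\tau\to 0}\|\bar\gamma_\tau\|_{L^2(\mu_\tau)}\leq|\partial\mathcal{W}|(\mu)$. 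By Remark \ref{metrizable} the family $\{\bar\gamma_\tau\mu_\tau\}$ is then relatively compact for the weak convergence of Definition \ref{strongconvergencedefinition}. For any sub-sequence $\bar\gamma_{\tau_k}\rightharpoonup\xxi\in L^2(\rd,\mu;\rd)$, passing to the limit in the subdifferential inequality from Step 2 using Proposition \ref{planconvergence} for the linear pairing and Proposition \ref{gammaconvergenceproposition} for the $\mathcal{W}$-terms shows $\xxi\in\partial\mathcal{W}(\mu)$. The minimal-norm characterization \eqref{convexslope} forces $\xxi=\partial^o\mathcal{W}(\mu)$ independently of the subsequence, and combined with the $\limsup$ bound on the norms this upgrades the weak convergence of the full family to the strong one.
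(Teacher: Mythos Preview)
Your Steps 1 and 2 are sound and reproduce, at the barycenter level, the content of \cite[Lemma~10.3.4]{AGS} together with Lemma~\ref{planandbarycenter}. The paper takes a different and shorter route: it does not redo this variational computation but invokes \cite[Theorem~10.3.10, Remark~10.3.14]{AGS} as a black box to obtain the convergence $\gamma_\tau\to\mbox{\boldmath$\partial$}^o\mathcal{W}(\mu)$ in $\PP_2(\rdrd)$ at the \emph{plan} level, identifies $\mbox{\boldmath$\partial$}^o\mathcal{W}(\mu)=(\id,\partial^o\mathcal{W}(\mu))_\#\mu$ via Corollary~\ref{coroextended}, and only then reads off the strong convergence of barycenters by testing the $\PP_2(\rdrd)$-convergence with $\phi(x,y)=\langle y,\zeta(x)\rangle$ and $\phi(x,y)=|y|^2$.

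Your Step~3, however, has a genuine gap in the identification of the weak limit. To show that a weak limit $\xxi$ of $\bar\gamma_{\tau_k}$ lies in $\partial\mathcal{W}(\mu)$ you propose passing to the limit in the inequality from Step~2, and you cite Proposition~\ref{planconvergence} ``for the linear pairing.'' But the term
\[
\int_{\rdrd}\langle\bar\gamma_{\tau_k}(x),y-x\rangle\,d\sigma_k(x,y)
=\int_{\rd}\langle\bar\gamma_{\tau_k}(x),\bar\sigma_k(x)-x\rangle\,d\mu_{\tau_k}(x)
\]
is a bilinear pairing of two sequences that are each only weakly convergent: $\bar\gamma_{\tau_k}$ weakly in the sense of Definition~\ref{strongconvergencedefinition}, and $\bar\sigma_k$ likewise (this is all Proposition~\ref{planconvergence} gives). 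Weak--weak convergence does not allow passage to the limit in such a product; $\bar\sigma_k-\id$ is not a fixed $C_c^\infty$ test field, and neither factor is known to converge strongly at this stage. This is precisely the obstruction that forces the paper (and \cite{AGS}) to keep the full plan $\gamma_\tau\in\PP_2(\rdrd)$ rather than its barycenter: the closure of the \emph{plan} subdifferential under $\PP_2(\rdrd)$-convergence (the content of \cite[Lemma~10.3.8, Theorem~10.3.10]{AGS}) retains enough joint information on the pair $(x,y)$ to control the three-plan pairing $\int\langle y,z-x\rangle\,d\mbox{\boldmath$\mu$}$, after which one projects onto barycenters. Your barycenter-only route discards exactly this information. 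The fix is to carry $\gamma_\tau$ (not $\bar\gamma_\tau$) through the compactness and closure argument, which is the appendix machinery you alluded to in Step~2 but did not invoke here.
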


Making use of Proposition \ref{convergetominimalselection} we can
prove the following

\begin{lem}\label{prop:gamma conv} Let $\mu\in\PP_2(\rd)$. There exists a  sequence
$\{\mu_n\}_{n\in\NN}\subset\PP_2(\rd)$, with
$\int_{\rd}|x|^2\,d\mu_n<+\infty$, such that $\mu_n$ narrowly
converges to $\mu$ and
\[
\int_{\rd}\nabla_1
\mathbf{W}_n(\cdot,y)\,d\mu_n(y)\to\partial^o\mathcal{W}(\mu)\quad\mbox{weakly
in the sense of {\rm Definition
\ref{strongconvergencedefinition}}.}
\]
\end{lem}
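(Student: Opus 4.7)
The plan is a double approximation in the Moreau--Yosida scale $\tau$ and the regularization index $n$. For each $n\in\NN$ and each $\tau<\tau_0$, where $\tau_0$ can be chosen independently of $n$ thanks to Proposition \ref{negativequadratic} and Remark \ref{lsc}, set
\[
\mu_\tau^{(n)}:=\mathrm{argmin}\left\{\mathcal{W}_n(\nu)+\tfrac{1}{2\tau}\,d_W^2(\nu,\mu):\nu\in\PP_2(\rd)\right\},\quad \mu_\tau:=\mathrm{argmin}\left\{\mathcal{W}(\nu)+\tfrac{1}{2\tau}\,d_W^2(\nu,\mu)\right\},
\]
which are unique for $\tau$ small by strict convexity of the penalized functional. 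Let $\bar\gamma_\tau^{(n)}$ and $\bar\gamma_\tau$ denote the corresponding rescaled-plan barycenters as in Definition \ref{rescaled}. Since $\mathbf{W}_n\in C^{1,1}(\rdrd)$, applying Proposition \ref{convergetominimalselection} to $\mathcal{W}_n$ and then Proposition \ref{smooth} identifies the barycenter in closed form:
\[
\bar\gamma_\tau^{(n)}(x)=\int_{\rd}\nabla_1\mathbf{W}_n(x,y)\,d\mu_\tau^{(n)}(y),
\]
which is precisely the quantity the lemma asks to control. Applied to $\mathcal{W}$ itself, Proposition \ref{convergetominimalselection} also yields $\mu_\tau\to\mu$ in $\PP_2(\rd)$ and $\bar\gamma_\tau\to\partial^o\mathcal{W}(\mu)$ strongly in the sense of Definition \ref{strongconvergencedefinition} as $\tau\to 0$.

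Next I would fix $\tau<\tau_0$ and send $n\to\infty$. Proposition \ref{gammaconvergenceproposition} provides the narrow $\Gamma$-convergence (along sequences with uniformly bounded second moments) of the regularized penalized functionals to the limit penalized functional, while Proposition \ref{negativequadratic} supplies the coercivity needed to control the second moments of $\mu_\tau^{(n)}$ uniformly in $n$. Combined with uniqueness of the limit minimizer, this yields $\mu_\tau^{(n)}\to\mu_\tau$ in $\PP_2(\rd)$. Extracting a subsequence from a family of optimal plans $\hat\gamma_\tau^{(n)}\in\Gamma_o(\mu_\tau^{(n)},\mu)$, Proposition \ref{planconvergence} produces a narrow limit $\hat\gamma_\tau\in\Gamma_o(\mu_\tau,\mu)$ together with the weak convergence of barycenters; since $\bar\gamma_\tau^{(n)}(x)=(\bar{\hat\gamma}_\tau^{(n)}(x)-x)/\tau$, this translates into $\bar\gamma_\tau^{(n)}\to\bar\gamma_\tau$ weakly in the sense of Definition \ref{strongconvergencedefinition}.

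A diagonal extraction would then conclude the argument. The minimality inequality together with Jensen's inequality applied to the barycenter yields
\[
\|\bar\gamma_\tau^{(n)}\|_{L^2(\rd,\mu_\tau^{(n)};\rd)}^2\le\frac{d_W^2(\mu_\tau^{(n)},\mu)}{\tau^2}\le\frac{2}{\tau}\bigl(\mathcal{W}_n(\mu)-\mathcal{W}_n(\mu_\tau^{(n)})\bigr),
\]
and using the uniform bound from Proposition \ref{negativequadratic} together with the second moment control above, the right-hand side stays bounded as $n$ is large and $\tau$ small. On such a bounded $L^2$-set, Remark \ref{metrizable} makes the weak convergence of Definition \ref{strongconvergencedefinition} metrizable. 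A standard diagonal procedure then produces a sequence $\tau_n\to 0$ for which $\mu_n:=\mu_{\tau_n}^{(n)}$ inherits narrow convergence to $\mu$ from the two-step chain $\mu_{\tau_n}^{(n)}\to\mu_{\tau_n}\to\mu$, while simultaneously
\[
\int_{\rd}\nabla_1\mathbf{W}_n(\cdot,y)\,d\mu_n(y)=\bar\gamma_{\tau_n}^{(n)}\longrightarrow\partial^o\mathcal{W}(\mu)
\]
weakly in the sense of Definition \ref{strongconvergencedefinition}.

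I expect the main obstacle to be coupling the two limits cleanly: the $\tau\to 0$ limit, which brings the rescaled-plan barycenter to the minimal selection, and the $n\to\infty$ limit, which smooths the potential and lets Proposition \ref{smooth} identify the barycenter as a gradient. Both are needed and they must be synchronized along the same sequence $\mu_n$. The discrete-slope estimate above, which is the Wasserstein counterpart of the a priori $L^2$-bound for minimizing movement schemes, together with the metrization in Remark \ref{metrizable}, is what makes this diagonal extraction robust.
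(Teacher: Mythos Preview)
Your proposal is correct and follows essentially the same route as the paper: the double approximation in $(\tau,n)$, $\Gamma$-convergence of the penalized functionals (Proposition \ref{gammaconvergenceproposition}), convergence of optimal plans and their barycenters (Proposition \ref{planconvergence}), metrization via Remark \ref{metrizable}, and a diagonal extraction combined with Proposition \ref{convergetominimalselection}. Two harmless overclaims to tidy up: uniqueness of $\mu_\tau^{(n)}$ and $\mu_\tau$ is neither asserted nor needed (the paper works with narrow limit points along subsequences), and the convergence $\mu_\tau^{(n)}\to\mu_\tau$ is obtained only narrowly with uniformly bounded second moments, which is all that Proposition \ref{planconvergence} requires.
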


\begin{proof}
Let $\tau_0$ be as in Remark \ref{lsc},  and consider a measure
$\mu\in\PP_2(\rd)$. Define, for $\tau\leq\tilde{\tau}<\tau_0$,
\[
\mu^h_\tau=\mathrm{argmin}
\left\{\mathcal{W}_h(\nu)+\frac1{2\tau}\,d_W^2(\nu,\mu):\nu\in\PP_2(\rd)\right\}.
\]
By the uniform estimates of \cite[Lemma 2.2.1]{AGS},
\begin{equation}\label{unif}
\sup_{h\in\mathbb{N},\,\tau\leq\tilde{\tau}}\int_{\rd}|x|^2\,d\mu_\tau^h<+\infty.
\end{equation}
This shows that the sequence $\{\mu_\tau^h\}_{h\in\NN}$ is tight
and has bounded second moments. Let $\mu_\tau$ be a narrow limit
point. Proposition \ref{gammaconvergenceproposition} yields, for
any $\nu\in\PP_2(\rd)$,
\[
\begin{aligned}
\mathcal{W}(\mu_\tau)+\frac1{2\tau}\,d_W^2(\mu_\tau,\mu)
&\leq
\liminf_{h\to\infty}\left(\mathcal{W}_h(\mu_\tau^h)+\frac1{2\tau}\,d_W^2(\mu_\tau^h,\mu)\right)\\
&\leq
\liminf_{h\to\infty}\left(\mathcal{W}_h(\nu)+\frac1{2\tau}\,d_W^2(\nu,\mu)\right)\\
&=\mathcal{W}(\nu)+\frac1{2\tau}\,d_W^2(\nu,\mu).
\end{aligned}
\]
This shows that $\mu_\tau$ is indeed a minimizer for
\eqref{moreauwasserstein}. Let
$\hat{\gamma}_\tau^h\in\Gamma_o(\mu_\tau^h,\mu)$ and let
$\gamma_\tau^h$ be the corresponding rescaled plans (see
Definition \ref{rescaled}). If
$\{h(m)\}_{m\in\NN}\subset\mathbb{N}$ is a sequence such that
$\mu_\tau^{h(m)}$ narrowly converges to $\mu_\tau$, since
$\{\mu_\tau^{h(m)}\}_{m\in\NN}$ has uniformly bounded second
moments due to \eqref{unif}, by Proposition \ref{planconvergence}
we have (possibly on a further subsequence that we do not relabel)
the narrow $\PP(\rdrd)$ limits
\[
\hat{\gamma}_{\tau}^{h(m)}\rightharpoonup
\hat{\gamma}_{\tau},\qquad
{\gamma}_{\tau}^{h(m)}\rightharpoonup{\gamma}_{\tau}
\]
and also $\bar{\gamma}_\tau^{h(m)}\to\bar{\gamma}_\tau$ weakly in
the sense of Definition \ref{strongconvergencedefinition}. Let
$\mathbf{d}$ be a distance which metrizes this weak convergence.
Indeed, it is metrizable thanks to Remark \ref{metrizable}
since
\begin{align*}
\int_{\rd} |\bar{\gamma}_\tau^{h(m)}(x)|^2 \,d\mu_\tau^{h(m)}(x) &
\,\leq \int_{\rd} \int_{\rd} |y|^2 \,d(\gamma_\tau^{h(m)})_x (y)
\,d\mu_\tau^{h(m)}(x) = \int_{\rd\times\rd} |y|^2
\,d\gamma_\tau^{h(m)} (x,y)
\\
&\, = \int_{\rd} \frac{|y-x|^2}{\tau^2} d\hat\gamma_t^{h(m)}(y) =
\frac1{\tau^2} d_W^2(\mu_\tau^{h(m)},\mu)
\end{align*}
which is uniformly bounded in $m$ for any fixed $0<\tau\leq
\tilde\tau$ due to \eqref{unif}.

Then, if $\tau(n)$ is a vanishing sequence, we can extract a
further subsequence $\{h(n)\}_{n\in\NN}$ from $\{h(m)\}_{m\in\NN}$
such that,
 \[
\mathbf{d}(\bar{\gamma}_{\tau(n)}^{h(n)},\bar{\gamma}_{\tau(n)})
<\frac1n.
 \]
Then we have
\[\begin{aligned}
\mathbf{d}(\bar{\gamma}_{\tau(n)}^{h(n)},\partial^o\mathcal{W}(\mu))&\leq
\mathbf{d}(\bar{\gamma}_{\tau(n)}^{h(n)},\bar{\gamma}_{\tau(n)})+
\mathbf{d}(\bar{\gamma}_{\tau(n)},\partial^o\mathcal{W}(\mu))\\&\leq
\frac1n+
\mathbf{d}(\bar{\gamma}_{\tau(n)},\partial^o\mathcal{W}(\mu))
\end{aligned}
\]
Invoking Proposition \ref{convergetominimalselection}, we know
 that $\bar{\gamma}_\tau$ converges to
 $\partial^o\mathcal{W}(\mu)$ weakly in the sense of Definition
 \ref{strongconvergencedefinition} as $\tau\to 0$.
  Hence, passing to the limit as $n\to\infty$, we see that
$\bar{\gamma}_n:=\bar{\gamma}_{\tau(n)}^{h(n)}$ weakly converge to
$\partial^o\mathcal{W}(\mu)$ in the sense of Definition
\ref{strongconvergencedefinition}.

Finally, by Proposition \ref{convergetominimalselection} we know
that, for any $n$,
$\bar{\gamma}_n\in\partial_S\mathcal{W}_n(\mu_n)$. Since
$\mathbf{W}_n$ is $C^{1,\,1}$, by the characterization of strong
subdifferential of Proposition \ref{smooth} we have
\[
\bar{\gamma}_n(x)=\int_{\rd}\nabla_1\mathbf{W}_n(x,y)\,d\mu(y)
\]
and the proof is concluded.
\end{proof}

\begin{proof}[\textbf{{Proof of {\rm \textbf{Theorem {\ref{minimalselectionchar}}}}}}]
Let $\mu\in\PP_2(\rd)$, $\xxi=\partial^o\w(\mu)$ and $\mu_n$ the sequence given by Lemma \ref{prop:gamma conv}.
By Proposition \ref{smooth}, the only
element of $\partial_S \mathcal{W}_n(\mu_n)$ is given by
\[
\xxi_n(x):=\int_{\rd} \nabla_1 \mathbf{W}_n(x,y)\,d\mu_n(y).
\]
Let us consider the maps $(\id,\nabla
\mathbf{W}_n):\rd\times\rd\longrightarrow (\rd)^4$ given by
$$
(\id,\nabla \mathbf{W}_n)(x,y)=(x,y,\nabla \mathbf{W}_n(x,y)).
$$
Introducing the measures
$$
\nu_n:=(\id,\nabla \mathbf{W}_n)_\#(\mu_n\times\mu_n),
$$
by Lemma \ref{prop:gamma conv} we have, for any $\zeta\in
C^\infty_0(\rd)$
\begin{equation}\label{weakconvergence}\begin{aligned}
\int_{(\rd)^4} \langle v,\zeta(x)\rangle \,d\nu_n(x,y,v,w)
&=\int_{\rdrd}\langle\nabla_1
\mathbf{W}_n(x,y),\zeta(x)\rangle\,d(\mu_n\times\mu_n)(x,y)\\
&=\int_{\rd}\langle \xxi_n,\zeta\rangle\,d\mu_n
\longrightarrow \int_{\rd}
\langle \xxi,\zeta\rangle\,d\mu.
\end{aligned}
\end{equation}
The sequence $\nu_n$ is tight. Indeed, from the quadratic growth
of $\mathbf{W}$ and $\mathbf{W}_n$ at infinity and the boundedness
of $\mu_n$ in $\PP_2(\rd)$ we obtain the uniform estimate
\[
\sup_{n\in\N}\int_{\rd} |x|^2\,d(\nabla_1
{\mathbf{W}_n}_\#\mu_n)(x)=\sup_n\int_{\rd}|\nabla_1
\mathbf{W}_n|^2\,d\mu_n(x)<+\infty,
\]
which implies the tightness of the marginals of $\nu_n$. Then  we
can extract a subsequence (that we do not relabel) narrowly
converging to some $\nu\in\PP((\rd)^4)$. Moreover, for $\zeta\in
C^\infty_0(\rd)$, due to the linear growth of the integrand we
have
\[
\lim_{n\to\infty}\int_{(\rd)^4} \langle v,\zeta(x)\rangle
\,d\nu_n(x,y,v,w)=\int_{(\rd)^4} \langle v,\zeta(x)\rangle
\,d\nu(x,y,v,w).
\]
The narrow convergence of measures implies that $\mathrm{supp}(\nu)$ is
contained in the Kuratowski minimum limit of the supports of
$\nu_n$ (see for instance \cite[Proposition 5.1.8]{AGS}), i.e. for every $(x,y,\eeta)\in \mathrm{supp}(\nu)$ there exists a sequence
$(x_n,y_n,\eeta_n)\in \mathrm{supp}(\nu_n)$ such that $(x_n,y_n,\eeta_n)$ converges to $(x,y,\eeta)$.
Since, by definition of $\nu_n$, $\mathrm{supp}(\nu_n)\subset \mathrm{graph}(\partial
\mathbf{W}_n)$, then $\mathrm{supp}(\nu)\subset\mathrm{graph}(\partial\mathbf{W})$.
Indeed $\eeta_n\in\partial \mathbf{W}_n(x_n,y_n)$ and passing to the limit in the subdifferential inequality we obtain that
$\eeta\in\partial \mathbf{W}(x,y)$.

Disintegrating $\nu$ with respect to $(x,y)$, we obtain the
measurable family of measures $(x,y)\mapsto \nu_{x,\,y}$ such that
\[
\nu=\int_{\rdrd}\nu_{x,\,y}\,d(\mu\times\mu)(x,y).
\]
It follows that
$\mathrm{supp}(\nu_{x,\,y})\subset\partial\mathbf{W}(x,y)$. In the
limit
\[
\begin{aligned}
\lim_{n\to\infty}\int_{\rd} \langle
\xxi_n(x),\zeta(x)\rangle&\,d\mu_n(x)=\int_{(\rd)^4}
\langle v,\zeta(x)\rangle\,d\nu(x,y,v,w)\\
&=\frac12\int_{(\rd)^4} \langle
v,\zeta(x)\rangle\,d\nu(x,y,v,w)+\frac12\int_{(\rd)^4}
\langle w,\zeta(y)\rangle\,d\nu(x,y,v,w)\\
&=\frac12
\int_{\rd}\left\langle\int_{\rd}\int_{\rdrd}v\,d\nu_{x,\,y}(v,w)\,d\mu(y),\zeta(x)\right\rangle\,d\mu(x)\\
&\quad+\frac12\int_{\rd}\left\langle\int_{\rd}\int_{\rdrd}w\,d\nu_{x,\,y}(v,w)\,d\mu(x),\zeta(y)\right\rangle\,d\mu(y)\\
&=\int_{\rd}\left\langle\int_{\rd}\int_{\rdrd}\,
\frac12\,\left(v\,d\nu_{x,\,y}+\,w\,d\nu_{y,\,x}\right)(v,w)\,d\mu(y),\zeta(x)\right\rangle\,d\mu(x).
\end{aligned}
\]
Here we made use of the symmetry of $\mathbf{W}$, which entails
\[
\int_{(\rd)^4}\phi(v)\varphi(x)\,d\nu(x,y,v,w)=\int_{(\rd)^4}\phi(w)\varphi(y)\,d\nu(x,y,v,w)
\]
for any functions $\phi,\varphi$, by using this latter equality for $\nu_n$ (since
$\mathbf{W}_n$ is symmetric) and passing it to the limit. Defining
\[
\eeta^1(x,y):=\int_{\rdrd}v\,d\nu_{x,\,y}(v,w),\quad\mbox{and}\quad
\eeta^2(x,y):=\int_{\rdrd}w\,d\nu_{x,\,y}(v,w),
\]
we obtain
\[
\lim_{n\to\infty}\int_{\rd} \langle
\xxi_n(x),\zeta(x)\rangle\,d\mu_n(x)=\int_{\rd}\left\langle\int_{\rd}\frac12\,\left(\eeta^1(x,y)+\eeta^2(y,x)
\right)\,d\mu(y),\zeta(x)\right\rangle\,d\mu(x).
\]
By \eqref{weakconvergence}, we get
\[
\xxi(x)=\int_{\rd}\frac12\,\left(\eeta^1(x,y)+\eeta^2(y,x)
\right)\,d\mu(y)
\]
On the other hand, we proved that
$\mbox{supp}(\nu_{x,\,y})\subset\partial\mathbf{W}(x,y)$. Since
$\eeta^1(x,y),\eeta^2(x,y)$ are barycenters and
$\partial\mathbf{W}(x,y)$ is convex, we have that
$\left(\eeta^1(x,y),\eeta^2(x,y)\right)\in\partial\mathbf{W}(x,y)$.
\end{proof}


\section{The convolution case}\label{convolutionsection}

\subsection{The case of $\mathbf{W}$ depending on the difference}

In the  case of  assumption \eqref{difference} we can
particularize the results above.
\begin{lem}\label{lem:sd}
Let \eqref{difference} hold. $(\eeta^1,\eeta^2)\in\de\mathbf{W}$
if and only if there exists $\eeta\in\de W$,  such that
$(\eeta^1,\eeta^2)=(\eeta,-\eeta)$.
\end{lem}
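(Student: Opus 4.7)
The plan is to prove both directions of the equivalence pointwise at each $(x,y)\in\rdrd$ by direct manipulation of the subdifferential inequality \eqref{def:subdiff}.

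For the easy direction $(\Leftarrow)$, given $\eeta\in\partial W(x-y)$ I would apply the defining subdifferential inequality for $W$ at $b=z_1-z_2$ and split the linear part as $\langle\eeta,(z_1-z_2)-(x-y)\rangle=\langle\eeta,z_1-x\rangle+\langle-\eeta,z_2-y\rangle$; this already gives the linear contribution required by the definition of $\partial\mathbf{W}(x,y)$, while the quadratic remainder $|(z_1-x)-(z_2-y)|^2$ controls $|z_1-x|^2+|z_2-y|^2$ by a parallelogram-type estimate, up to a universal factor fixed below.

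For the non-trivial direction $(\Rightarrow)$, I would proceed in two steps. First, I exploit the translation invariance $\mathbf{W}(x+tw,y+tw)=W(x-y)=\mathbf{W}(x,y)$: inserting $z_1=x+tw$, $z_2=y+tw$ in the subdifferential inequality for $(\eeta^1,\eeta^2)\in\partial\mathbf{W}(x,y)$ annihilates the left-hand side and leaves
\[
0\ge t\langle\eeta^1+\eeta^2,w\rangle+\lambda t^2|w|^2\qquad\forall t\in\R,\ w\in\rd.
\]
The map $t\mapsto t\langle\eeta^1+\eeta^2,w\rangle+\lambda t^2|w|^2$ is non-positive and vanishes at $t=0$, so its derivative at zero vanishes, forcing $\eeta^1+\eeta^2=0$. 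Writing $\eeta:=\eeta^1=-\eeta^2$, I would then extract $\eeta\in\partial W(x-y)$ by a sharp choice of test pair: for any given $b\in\rd$, among all $(z_1,z_2)$ with $z_1-z_2=b$ the symmetric choice $z_1=x+\tfrac12(b-(x-y))$, $z_2=y-\tfrac12(b-(x-y))$ minimizes $|z_1-x|^2+|z_2-y|^2$ to $\tfrac12|b-(x-y)|^2$, which since $\lambda\le 0$ is precisely what maximizes the right-hand side, producing
\[
W(b)-W(x-y)\ge\langle\eeta,b-(x-y)\rangle+\tfrac{\lambda}{4}|b-(x-y)|^2\qquad\forall b\in\rd,
\]
i.e.\ the subdifferential inequality for $W$.

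The main obstacle, and the source of the careful bookkeeping, is the relationship between the convexity constants of $\mathbf{W}$ and $W$: the ansatz $\mathbf{W}(x,y)=W(x-y)$ forces $\lambda\le 0$ and $W$ inherits only $\tfrac{\lambda}{2}$-convexity, as one sees by the change of variables $(u,v)=(x-y,x+y)$, under which $\mathbf{W}(x,y)-\tfrac{\lambda}{2}(|x|^2+|y|^2)$ decouples into a function of $u$ alone plus a function of $v$ alone. Once this correspondence is pinned down, both implications close cleanly, and in the fully convex case $\lambda=0$ no such adjustment is required.
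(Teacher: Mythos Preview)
Your argument is correct and follows the same core idea as the paper: exploit translation invariance $\mathbf{W}(x+h,y+h)=\mathbf{W}(x,y)$ to force $\eeta^1+\eeta^2=0$, then substitute back to recover the one-variable subdifferential inequality. The only real difference is that the paper first reduces to the convex case (``the general case follows considering $x\mapsto W(x)-\frac{\lambda}{2}|x|^2$''), which eliminates the quadratic remainder and makes the algebra trivial, whereas you keep $\lambda$ throughout and track the quadratic terms explicitly. Your route requires the observation $\lambda\le 0$ (which is indeed forced by the structure $\mathbf{W}(x,y)=W(x-y)$, since $\mathbf{W}$ is constant along diagonal directions) together with the inequality $|u-v|^2\le 2(|u|^2+|v|^2)$ to pass between the quadratic forms; the symmetric choice of $(z_1,z_2)$ you make in the converse is exactly what optimizes this comparison. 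Both approaches are equally valid; the paper's reduction is shorter, while yours makes transparent the relationship between the semiconvexity constants of $\mathbf{W}$ and $W$ (your $\lambda/2$ for $W$), which the paper leaves implicit.
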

\begin{proof}
Assume that $W$ is convex, the general case follows considering $x\mapsto W(x)-\frac{\lambda}{2}|x|^2$.\\
If $\eeta\in\partial W$  we have for every $(\tilde x,\tilde
y)\in\rdrd$
\[
\begin{aligned}
\mathbf{W}(\tilde{x},\tilde{y})-\mathbf{W}(x,y)=W(\tilde{x}-\tilde{y})-W(x-y)
&\geq \left\langle\eeta(x-y),\tilde{x}- \tilde{y}-(x-y)\right\rangle\\
&=\left\langle\left(\eeta(x-y),-\eeta(x-y)\right),(\tilde{x},\tilde{y})-(x,y)\right\rangle,
\end{aligned}
\]
which means that $(\eeta(x-y),-\eeta(x-y))\in\de\mathbf{W}(x,y)$
by making the abuse of notation $\eeta(x,y)\equiv\eeta(x-y)$.

On the other hand, if $(\eeta^1,\,\eeta^2)\in\partial \mathbf{W}$,
then for every $(\tilde x, \tilde y)\in\rdrd$ we have
\begin{align}
W(\tilde{x}-\tilde{y})-W(x-y)=\mathbf{W}(\tilde{x},\tilde{y})-\mathbf{W}(x,y)&\geq
\left\langle\left(\eeta^1(x,y),\eeta^2(x,y)\right),\left((\tilde{x},\tilde{y})-(x,y)\right)\right\rangle\nonumber\\
&=\langle\eeta^1(x,y),(\tilde{x}-x)\rangle+\langle\eeta^2(x,y),(\tilde{y}-y)\rangle.
\label{sd}
\end{align}
Assuming in particular that $x-y=\tilde x -\tilde y$ the inequality above
reduces to
\[
0\geq \langle\eeta^1(x,y)+\eeta^2(x,y),(\tilde{y}-y)\rangle,
\]
and the arbitrariness of  $\tilde{y}$ implies that
$\eeta^1=-\eeta^2$.
Using this relation in \eqref{sd} we obtain
\[
W(\tilde{x}-\tilde{y})-W(x-y)\geq
\langle\eeta^1(x,y),(\tilde{x}-\tilde{y})-(x-y)\rangle\,,
\]
which means that $\eeta^1\in \de W$.
\end{proof}

Applying the main results of the general case we obtain the following

\begin{coro}\label{coro}
 Let assumption \eqref{difference} hold.
If $\eeta$ is a Borel measurable anti-symmetric selection of
$\partial W$, then for any $\mu\in\PP_2(\rd)$,
\begin{equation}\label{sds}
\xxi(x):=\int_{\rd}\eeta(x-y)\,d\mu(y)=\eeta\ast\mu\in
\partial_S\mathcal{W}(\mu).
\end{equation}
Conversely, if $\xxi=\partial^o\w(\mu)$, then there exists a Borel
measurable anti-symmetric selection $\eeta\in\partial W$
such that
\begin{equation*}
\xxi(x)=\int_{\rd}\eeta(x-y)\,d\mu(y)=\eeta\ast\mu.
\end{equation*}
\end{coro}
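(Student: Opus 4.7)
The plan is to combine the general results of Section \ref{characterizationsection} with Lemma \ref{lem:sd}, which translates joint subdifferential selections into single-variable ones in the convolution case. A symmetrization step at the end will be needed, using that $W$ is necessarily even.

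For the direct statement, given an anti-symmetric Borel selection $\eeta\in\partial W$, I would set the two-variable selection $(\eeta^1(x,y),\eeta^2(x,y)):=(\eeta(x-y),-\eeta(x-y))$. Lemma \ref{lem:sd} shows that $(\eeta^1,\eeta^2)\in\partial\mathbf{W}$, so Theorem \ref{inequality} yields that formula \eqref{belongs} produces an element of $\partial_S\mathcal{W}(\mu)$. Substituting my choice gives
\[
\xxi(x)=\frac12\int_{\rd}\bigl(\eeta(x-y)-\eeta(y-x)\bigr)\,d\mu(y),
\]
and the anti-symmetry relation $\eeta(y-x)=-\eeta(x-y)$ collapses this to $\eeta\ast\mu$.

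For the converse, I would apply Theorem \ref{minimalselectionchar} to obtain a Borel selection $(\eeta^1,\eeta^2)\in\partial\mathbf{W}$ such that \eqref{belongs} represents $\xxi=\partial^o\mathcal{W}(\mu)$. By Lemma \ref{lem:sd}, there is a Borel selection $\hat\eeta\in\partial W$ with $\eeta^1(x,y)=\hat\eeta(x-y)$ and $\eeta^2(x,y)=-\hat\eeta(x-y)$, so
\[
\xxi(x)=\frac12\int_{\rd}\bigl(\hat\eeta(x-y)-\hat\eeta(y-x)\bigr)\,d\mu(y).
\]
To rewrite this integrand as $\eeta(x-y)$ for an anti-symmetric selection, I would note that $W$ is even: indeed, combining \eqref{sym} with \eqref{difference} via $\mathbf{W}(z,0)=\mathbf{W}(0,z)$ yields $W(z)=W(-z)$. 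This in turn gives $\partial W(z)=-\partial W(-z)$, so $-\hat\eeta(-z)\in\partial W(z)$, and by convexity of the set $\partial W(z)$ the anti-symmetrized map $\eeta(z):=\tfrac12\bigl(\hat\eeta(z)-\hat\eeta(-z)\bigr)$ is still a Borel selection of $\partial W$, and is manifestly anti-symmetric. Plugging into the display above produces $\xxi=\eeta\ast\mu$.

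The only delicate point is the last symmetrization step, where I must know that the convex combination of $\hat\eeta(z)$ with $-\hat\eeta(-z)$ remains inside $\partial W(z)$; this hinges on the evenness of $W$ together with the convexity of the subdifferential of a $\lambda$-convex function. Everything else is a mechanical transcription of the two-variable framework through the convolution dictionary provided by Lemma \ref{lem:sd}.
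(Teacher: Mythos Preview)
Your proof is correct and follows exactly the same route as the paper: Theorem~\ref{inequality} together with Lemma~\ref{lem:sd} for the direct part, and Theorem~\ref{minimalselectionchar} with Lemma~\ref{lem:sd} followed by anti-symmetrization for the converse. You are in fact more explicit than the paper in justifying the final symmetrization step (evenness of $W$, hence $\partial W(z)=-\partial W(-z)$, hence the midpoint lies in $\partial W(z)$ by convexity), which the paper states without elaboration.
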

\begin{proof}
Let $\eeta$ be an antisymmetric selection in $\de W$. In particular
$\eeta(x-y)=-\eeta(y-x)$ for every $x,y\in\rd$ and by Lemma \ref{lem:sd}
$(\eeta^1(x,y),\,\eeta^2(x,y)):=(\eeta(x-y), \eeta(y-x)) \in \de \mathbf{W}(x,y)$.
Applying Theorem \ref{inequality} to the Borel measurable selection
$(\eeta^1,\eeta^2)$ just defined, we get \eqref{sds}.

Conversely, assuming that $\xxi=\partial^o\w(\mu)$, by Theorem
\ref{minimalselectionchar} and Lemma \ref{lem:sd} we obtain that
$$
\xxi(x)=\int_{\rd}\frac12(\eeta^1(x-y)-\eeta^1(y-x))\,d\mu(y) \, .
$$
By choosing $\eeta(z)=\frac12\,(\eeta^1(z)-\eeta^1(-z))$, we
conclude.
\end{proof}

\begin{remark}{\rm
We observe that if $\eeta$ is a Borel measurable selection of
$\partial W$ such that for any $\mu\in\PP_2(\rd)$
\begin{equation*}
\xxi(x):=\int_{\rd}\eeta(x-y)\,d\mu(y)=\eeta\ast\mu\in
\partial_S\mathcal{W}(\mu),
\end{equation*}
then $\eeta$ is antisymmetric. Indeed, let $\eeta\in\partial
W$ be such that
\[
\mathcal{W}(\nu)-\mathcal{W}(\mu)\geq\int_{\rd}\left\langle
\int_{\rd}\eeta(x-y)\,d\mu(y),z-x\right\rangle\,d\gamma(x,z),
\]
for $\gamma\in\Gamma(\mu,\nu)$.
Choosing $\mu=\delta_{x_1}$ and $\nu=\delta_{x_3}$, $\gamma=\mu\times\nu$,
the inequality becomes
\[
0\geq \langle \eeta(0),x_2-x_1\rangle,
\]
and since this must hold for any $x_1,x_2\in\rd$, we deduce
$\eeta(0)=0$.
Moreover, taking into account that $\eeta(0)=0$, if
$\mu=\frac12\delta_{x_1}+\frac12\delta_{x_2}$,
$\nu=\frac12\delta_{x_3}+\frac12 \delta_{x_4}$, with
$|x_1-x_3|\leq |x_1-x_4|$ and $|x_2-x_4|\leq |x_2-x_3|$, the
subdifferential  inequality reduces to
\[
 W(x_3-x_4)-W(x_1-x_2)\geq\langle\eeta(x_1-x_2),x_3-x_1\rangle+\langle\eeta(x_2-x_1),x_4-x_2\rangle.
\]
In particular, for $x_3-x_4=x_1-x_2$ we get
\[
0\geq\langle\eeta(x_1-x_2)+\eeta(x_2-x_1),x_3-x_1\rangle,
\]
which yields $\eeta(x_1-x_2)=-\eeta(x_2-x_1)$ for any $x_1,x_2\in\rd$.}
\end{remark}

\begin{remark}\rm
If $\mu\ll\mathcal{L}^d$ we can conclude that
\[
\int_{\rd}\eeta(x-y)\,d\mu(y)\in \partial\mathcal{W}(\mu)
\]
for any Borel selection $\eeta$ in $\partial W$. Indeed, in this
case the set where $\partial W$ is not a singleton is
$\mu$-negligible. That is, in the integral we can restrict to the
points where $W$ has a gradient (there is no need to select), and
in that case $\nabla W\ast\mu$ belongs to the Wasserstein
subdifferential of $\mathcal{W}$ at $\mu$ (it is actually its
minimal selection), as shown in \cite{CDFLS}.
\end{remark}

\subsection{The radial case} In the radial case, we are able to give a more explicit
characterization of the minimal selection of the Wasserstein subdifferential.
 Before stating our theorem, we recall that
\[
\partial^o W(x)={\mathrm{argmin}}\{|\mathbf{y}|:\mathbf{y}\in\partial W(x)\}.
\]
We have the following
\begin{theorem}\label{radial}
Let $\mathbf{W}$ be convex and such that \eqref{distance} holds.
Then
\begin{equation}\label{minimalradial}
\partial^o\mathcal{W}(\mu)=(\partial^o W)\ast\mu\qquad\forall
\mu\in\PP_2(\mathbb{R}^d).
\end{equation}
\end{theorem}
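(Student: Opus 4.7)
The plan is to show $(\partial^o W)\ast\mu$ coincides with $\partial^o \mathcal{W}(\mu)$ by verifying that this convolution lies simultaneously in $\partial_S \mathcal{W}(\mu)$ and in the tangent cone $T_\mu \PP_2(\rd) := \overline{\{\nabla \varphi : \varphi \in C_c^\infty(\rd)\}}^{L^2(\mu;\rd)}$, and then invoking the classical characterization of the minimal-norm element of the Wasserstein subdifferential of a $\lambda$-convex functional as the unique element of $\partial_S\mathcal{W}(\mu)\cap T_\mu\PP_2(\rd)$ (see \cite[Chapter~10]{AGS}). As a preliminary, since $W(z)=w(|z|)$ is convex on $\rd$, the radial profile $w$ must be convex and nondecreasing on $[0,\infty)$; hence for $z\neq 0$ one has $\partial W(z)=\{s\,z/|z|:s\in[w'_-(|z|),w'_+(|z|)]\}$ with $w'_-(|z|)\geq 0$, while $\partial W(0)$ is a closed ball. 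The minimal-norm element $\partial^oW(z)=w'_-(|z|)\,z/|z|$ for $z\neq0$ (and $\partial^oW(0)=0$) is a Borel antisymmetric selection in $\partial W$, thanks to the evenness of $W$. Corollary~\ref{coro} then gives $\partial^oW\ast\mu\in\partial_S\mathcal{W}(\mu)\subset\partial\mathcal{W}(\mu)$, which already yields $\|\partial^o\mathcal{W}(\mu)\|_{L^2(\mu)}\leq\|\partial^oW\ast\mu\|_{L^2(\mu)}$.

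For the tangent-space membership, I would approximate $W$ by its (one-variable) Moreau-Yosida regularizations $W_n\in C^{1,1}(\rd)$, which are radial and convex with $W_n\nearrow W$ pointwise, $\nabla W_n(z)\to\partial^oW(z)$ pointwise as $n\to\infty$, and $|\nabla W_n(z)|\leq|\partial^oW(z)|\leq C(1+|z|)$ (the last bound coming from the quadratic growth \eqref{growth}). Since $\mu\in\PP_2(\rd)$, the map $x\mapsto C(1+|x|+\int|y|\,d\mu(y))$ belongs to $L^2(\mu)$, so dominated convergence applied inside the convolution yields $\nabla W_n\ast\mu\to\partial^oW\ast\mu$ strongly in $L^2(\mu;\rd)$. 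But $\nabla W_n\ast\mu=\nabla(W_n\ast\mu)$ is the gradient of a $C^1$ function; a standard cutoff approximation shows $\nabla(W_n\ast\mu)\in T_\mu\PP_2(\rd)$, and closedness of $T_\mu\PP_2(\rd)$ in $L^2(\mu;\rd)$ propagates this to the limit $\partial^oW\ast\mu\in T_\mu\PP_2(\rd)$.

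Combining the two steps, $\partial^oW\ast\mu\in\partial_S\mathcal{W}(\mu)\cap T_\mu\PP_2(\rd)$, and the same intersection contains $\partial^o\mathcal{W}(\mu)$ by general theory; by the singleton property recalled above, the two coincide, establishing \eqref{minimalradial}. The main obstacle is the final step, which relies on the tangent-space characterization of the minimal selection from \cite[Chapter~10]{AGS}: while standard in Wasserstein gradient flow theory, it must be quoted carefully in the convex ($\lambda\geq0$) setting assumed here. The hypothesis of convexity (rather than merely $\lambda$-convexity with $\lambda<0$) enters crucially through the nonnegativity $w'_-(|z|)\geq 0$, which ensures that $\partial^oW$ is a well-behaved radial antisymmetric selection whose Moreau-Yosida approximants have gradients bounded pointwise by $|\partial^oW|$; the remaining arguments (Moreau-Yosida pointwise convergence, dominated convergence, cutoff in $T_\mu$) are routine.
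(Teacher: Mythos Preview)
Your first two steps are fine: $\partial^oW$ is indeed a Borel antisymmetric selection in $\partial W$, so Corollary~\ref{coro} yields $(\partial^oW)\ast\mu\in\partial_S\mathcal{W}(\mu)$; and the Moreau--Yosida approximation argument places $(\partial^oW)\ast\mu$ in $T_\mu\PP_2(\rd)$.

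The gap is in the third step. The claim that $\partial_S\mathcal{W}(\mu)\cap T_\mu\PP_2(\rd)$ is a singleton is \emph{not} a theorem in \cite[Chapter~10]{AGS}, and in fact it is false precisely in the situation that matters here. For a finitely supported measure $\mu=\sum_{i=1}^N m_i\delta_{x_i}$ with distinct atoms, the tangent space equals all of $L^2(\mu;\rd)$: given any prescribed vectors $v_1,\dots,v_N\in\rd$ one can build $\varphi\in C_c^\infty(\rd)$ with $\nabla\varphi(x_i)=v_i$. Hence $\partial_S\mathcal{W}(\mu)\cap T_\mu\PP_2(\rd)=\partial_S\mathcal{W}(\mu)$, and by Corollary~\ref{coro} this set contains $\eeta\ast\mu$ for \emph{every} antisymmetric Borel selection $\eeta\in\partial W$. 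When some difference $x_i-x_j$ lands at a nondifferentiability point of $W$, these selections give genuinely different elements of $\partial_S\mathcal{W}(\mu)$, so the intersection is not a singleton and your uniqueness argument collapses. (What \cite{AGS} does establish is that $\partial^o\mathcal{W}(\mu)\in T_\mu\PP_2(\rd)$, but this is only a necessary condition, not a characterization.)

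The paper avoids this obstruction by a completely different route: it uses Theorem~\ref{minimalselectionchar} and Corollary~\ref{coro} to reduce the problem to minimizing $\|\eeta\ast\mu\|_{L^2(\mu)}^2$ over antisymmetric selections $\eeta\in\partial W$, then expands this quadratic expression by isolating the contribution of $\eeta$ at a fixed pair $\pm v$. The radiality and convexity hypotheses enter through the sign of the cross terms (via $\langle\eeta(x-y),\eeta(x-z)-\eeta(y-z)\rangle\ge0$), which forces the pointwise minimal choice $\eeta(v)=\partial^oW(v)$ to minimize the full norm. This is where the actual work lies, and the examples immediately following the theorem show that neither convexity nor radiality can be dropped.
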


\begin{proof}
Let $\mu\in\PP_2(\rd)$. Joining together the results of Theorem
\ref{minimalselectionchar} and corollary \ref{coro}, we know that
the $\partial^o\mathcal{W}(\mu)$ has the form of a convolution
with an anti-symmetric selection in the subdifferential of $W$.
Hence, in order to find the explicit form of
$\partial^o\mathcal{W}(\mu)$, we have to minimize the quantity
\[
\|\eeta\ast\mu\|^2_{L^2(\rd,\mu;\rd)}=\int_{\rd}\left|\int_{\rd}\eeta(x-y)\,d\mu(y)\right|^2\,d\mu(x)
\]
among all measurable anti-symmetric selections $\eeta$ in
$\partial W$. Let us introduce some notation. We define, for any
$x\in\rd$, $v\in\rd$, the singleton sets
\[
A^+_v(x):=\{x-v\},\quad A^-_v(x):=\{x+v\}.
\]
Note that by antisymmetry of $\eeta$, we get
\begin{equation}\label{tech}
\int_{A^\pm_v(x)} \eeta(x-y)\,d\mu(y) = \pm\mu\left(\{x\mp
v\}\right) \eeta(v) \, .
\end{equation}
Fix a point $v\in\rd$, different from the origin. We have
\[
\begin{aligned}
\left|\int_{\rd}\eeta(x-y)\,d\mu(y)\right|^2
 &=
\left|\int_{A^+_v(x)\,\cup
A^-_v(x)}\eeta(x-y)\,d\mu(y)+\int_{\rd\setminus(A^+_v(x)\,\cup
A^-_v(x))}\eeta(x-y)\,d\mu(y)\right|^2
\end{aligned}
\]
We suitably expand the square as
$(a+b+c)^2=a^2+2a(b+c)+b^2+2b(a+c)-2ab+c^2$. This way, since
$A_v^+(x)\cup (\rd\setminus(A^+_v(x)\,\cup A^-_v(x)))=\rd\setminus
A_v^-(x)$ and $A_v^-(x)\cup (\rd\setminus(A^+_v(x)\,\cup
A^-_v(x)))=\rd\setminus A_v^+(x)$, the right hand side can be
rewritten as
\[\begin{aligned}
&\left|\int_{A^+_v(x)}\eeta(x-y)\,d\mu(y)\right|^2
 +2\left\langle\int_{A^-_v(x)}\eeta(x-y)\,d\mu(y),\int_{\rd\setminus
A^-_v(x)}\eeta(x-y)\,d\mu(y)\right\rangle
\\+&\left|\int_{A^-_v(x)}\eeta(x-y)\,d\mu(y)\right|^2+
2\left\langle\int_{A^+_v(x)}\eeta(x-y)\,d\mu(y),\int_{\rd\setminus
A^+_v(x)}\eeta(x-y)\,d\mu(y)\right\rangle\\ -&
2\left\langle\int_{A^+_v(x)}\eeta(x-y)\,d\mu(y),
\int_{A^-_v(x)}\eeta(x-y)\,d\mu(y)\right\rangle
 +R(x),
\end{aligned}\]
where the remainder term $R(x)$ is given by
\[
R(x)=\left|\int_{\rd\setminus(A^+_v(x)\,\cup
A^-_v(x))}\eeta(x-y)\,d\mu(y)\right|^2,
\]
so that it does not depend on the values of $\eeta$ at $\pm v$.
Using \eqref{tech}, we are left with
\[\begin{aligned}
\left|\int_{\rd}\eeta(x-y)\,d\mu(y)\right|^2
 &=
\left(\mu(A^+_v(x))+\mu(A^-_v(x))\right)^2|\eeta(v)|^2
\\&\quad+
2\left\langle\int_{A^+_v(x)}\eeta(x-y)\,d\mu(y),\int_{\rd\setminus
A^+_v(x)}\eeta(x-y)\,d\mu(y)\right\rangle
\\&\quad+
2\left\langle\int_{A^-_v(x)}\eeta(x-y)\,d\mu(y),\int_{\rd\setminus
A^-_v(x)}\eeta(x-y)\,d\mu(y)\right\rangle +R(x).
\end{aligned}
\]
Making use of the computed identity, we find
\[
\begin{aligned}
\|\eeta\ast\mu\|^2_{L^2(\rd,\mu;\rd)}&=
\left(\int_{\rd}\left(\mu(A^+_v(x))+\mu(A^-_v(x))\right)^2\,d\mu(x)\right)|\eeta(v)|^2+\mathcal{R}
\\&\quad+
2\int_{\rd}\left\langle\int_{A^+_v(x)}\eeta(x-y)\,d\mu(y),\int_{\rd\setminus
A^+_v(x)}\eeta(x-z)\,d\mu(z)\right\rangle\,d\mu(x)
\\&\quad+
2\int_{\rd}\left\langle\int_{A^-_v(x)}\eeta(x-y)\,d\mu(y),\int_{\rd\setminus
A^-_v(x)}\eeta(x-z)\,d\mu(z)\right\rangle\,d\mu(x)
\\&=
\left(\int_{\rd}\left(\mu(A^+_v(x))+\mu(A^-_v(x))\right)^2\,d\mu(x)\right)|\eeta(v)|^2+\mathcal{R}
\\&\quad+
2\int\!\!\!\int_{\{x-y=v\}}\left\langle\eeta(x-y),\int_{\{z\in\rd:z\neq
x-v\}}\eeta(x-z)\,d\mu(z)\right\rangle\,d\mu(x)\,d\mu(y)
\\&\quad+
2\int\!\!\!\int_{\{x-y=-v\}}\left\langle\eeta(x-y),\int_{\{z\in\rd:z\neq
x+ v\}}\eeta(x-z)\,d\mu(z)\right\rangle\,d\mu(x)\,d\mu(y),
\end{aligned}
\]
where $\mathcal{R}=\int_{\rd}R(x)\,d\mu(x)$. Notice that the
integrals with respect to $z$ can be equivalently taken on the set
$\{z\in\rd:z\neq x,y\}$ in the last two lines. Indeed, in the
first one the equality $x-y=v$ implies $\{z\in\rd:z\neq x-
v\}=\{z\in\rd:z\neq y\}$. Similarly for the second one. Moreover,
we can also neglect the set $\{z=x\}$ since $\eeta(0)=0$ by
anti-symmetry. We exchange $x$ with $y$ in the last integral, so
that
\[
\begin{aligned}&\int\!\!\!\int_{\{x-y=-v\}}\left\langle\eeta(x-y),\int_{\{z\neq
x,y\}}\eeta(x-z)\,d\mu(z)\right\rangle\,d\mu(x)\,d\mu(y)\\&\qquad\qquad=
\int\int_{\{y-x=-v\}}\left\langle\eeta(y-x),\int_{\{z\neq
x,y\}}\eeta(y-z)\,d\mu(z)\right\rangle\,d\mu(y)\,d\mu(x)\\&\qquad\qquad=
\int\int_{\{x-y=v\}}\left\langle-\eeta(x-y),\int_{\{z\neq
x,y\}}\eeta(y-z)\,d\mu(z)\right\rangle\,d\mu(y)\,d\mu(x).
\end{aligned}
\]
Hence from the previous computation we get
\begin{align}
\|\eeta\ast\mu\|^2_{L^2(\rd,\mu;\rd)}&=\left(\int_{\rd}\left(\mu(A^+_v(x))+\mu(A^-_v(x))\right)^2\,d\mu(x)\right)|\eeta(v)|^2+\mathcal{R}
\nonumber\\\label{firstequation} &\quad+
2\int\!\!\!\int_{\{x-y=v\}}\int_{\{z\neq
x,y\}}\left\langle\eeta(v),\eeta(x-z)-\eeta(y-z)\right\rangle\,d\mu(z)\,d\mu(x)\,d\mu(y).
\end{align}
On the set $\{(x,y)\in\rdrd:x-y=v\}$, we rewrite the interior
integral in the following way
\[
\begin{aligned}
\int_{\{z\neq
x,y\}}\left\langle\eeta(v),\eeta(x-z)-\eeta(y-z)\right\rangle\,d\mu(z)&=
\int_{\{z\neq
x,y,x+v,y-v\}}\left\langle\eeta(v),\eeta(x-z)-\eeta(y-z)\right\rangle\,d\mu(z)
\\&\quad+
\int_{\{z=x+v\}\cup\{z=y-v\}}\langle\eeta(v),\eeta(2v)-\eeta(v)\rangle\,d\mu(z)
\\&=
\int_{\{z\neq
x,y,x+v,y-v\}}\left\langle\eeta(v),\eeta(x-z)-\eeta(y-z)\right\rangle\,d\mu(z)
\\&\quad+
\left(\mu(A^-_v(x))+\mu(A^+_v(y))\right)\left(\langle\eeta(v),\eeta(2v)\rangle-|\eeta(v)|^2\right),
\end{aligned}
\]
and thus
\[
\begin{aligned}
&\int\!\!\!\int_{\{x-y=v\}}\int_{\{z\neq
x,y\}}\left\langle\eeta(v),\eeta(x-z)-\eeta(y-z)\right\rangle\,d\mu(z)\,d\mu(x)\,d\mu(y)
\\&\qquad\qquad=
\int\!\!\!\int_{\{x-y=v\}}\int_{\{z\neq
x,y,x+v,y-v\}}\left\langle\eeta(v),\eeta(x-z)-\eeta(y-z)\right\rangle\,d\mu(z)\,d\mu(x)\,d\mu(y)
\\&\qquad\qquad\quad+
2\left(\int_{\rd}(\mu(A^+_v(x))\mu(A^-_v(x)))\,d\mu(x)\right)\left(\langle\eeta(v),\eeta(2v)\rangle-|\eeta(v)|^2\right).
\end{aligned}
\]
Here, in the last term we used
$$
\int\!\!\!\int_{\{x-y=v\}} \!\!\!\!\!\!\mu(A^-_v(x))
\,d\mu(x)\,d\mu(y) = \int\!\!\!\int_{\{x-y=v\}}
\!\!\!\!\!\!\mu(A^+_v(y)) \,d\mu(x)\,d\mu(y) = \int
\mu(A^+_v(x))\mu(A^-_v(x)) \,d\mu(x)\,.
$$
Substituting in \eqref{firstequation} we get the final expression
\begin{align}
\|\eeta\ast\mu\|^2_{L^2(\rd,\mu;\rd)}&=\left(\int_{\rd}\left(\mu(A^+_v(x))-\mu(A^-_v(x))\right)^2\,d\mu(x)\right)|\eeta(v)|^2+\mathcal{R}
\nonumber \\\nonumber &\quad+
2\int\!\!\!\int_{\{x-y=v\}}\int_{\{z\neq
x,y,x+v,y-v\}}\!\!\!\!\!\!\left\langle\eeta(v),\eeta(x-z)-\eeta(y-z)\right\rangle\,d\mu(z)\,d\mu(x)\,d\mu(y)
\\\label{secondequation}&\quad+
4\left(\int_{\rd}(\mu(A^+_v(x))\mu(A^-_v(x)))\,d\mu(x)\right)\langle\eeta(v),\eeta(2v)\rangle.
\end{align}

Since $W$ is radial, for all $v\in\rd$, $\eeta(v)$ takes the form
\begin{equation}\label{radialpositive}
\eeta(v)=\frac{\eta(|v|)}{|v|}\,v,
\end{equation}
where $\eta$ denotes the generic selection in the subdifferential
of the profile function $w$ in \eqref{distance}.
Convexity implies that $\eta(|v|)\geq 0$, and thus
\begin{equation}\label{radialpositive2}
\langle\eeta(v),v\rangle= \left|\eeta(v)\right|\left|v\right|
\end{equation}
for all $v\in\rd$. The convexity hypothesis gives
\[
\langle x-y,\eeta(x-z)-\eeta(y-z)\rangle\geq0,
\]
for any $x,y,z$. Combining this with \eqref{radialpositive} since
$\eta(|v|)\geq 0$, we conclude that
\[
\langle \eeta(x-y),\eeta(x-z)-\eeta(y-z)\rangle\geq 0.
\]
From this last inequality it is readily seen that the scalar
products appearing in \eqref{secondequation} are (pointwise)
nonnegative. Therefore, since the remainder $\mathcal{R}$ is
independent on $\eeta(v)$ and $\eeta(-v)$, it is clear from
\eqref{secondequation} and \eqref{radialpositive2} that
$\|\eeta\ast\mu\|^2_{L^2(\rd,\mu;\rd)}$ at least does not increase
if we choose the value of $\eeta$ at $v$ to be the one that
minimizes $|\eeta(v)|$. By the arbitrariness of $v\in\rd$ we get
\eqref{minimalradial}.
\end{proof}

\begin{example}\rm
The above result fails if we omit the convexity assumption.
Indeed, let us consider a $1$-dimensional example. Let
\[
\widehat{W}(x)=\frac12\,|x^2-1|.
\]
Notice that this function is radial and $-1$-convex, and its
subdifferential is
\[
\partial W(x)=\left\{
\begin{aligned}
x\quad &\mbox{for $|x|>1$},\\
-x\quad &\mbox{for $|x|<1$},\\
[-1,1]\quad&\mbox{for $x=\pm 1$}.
\end{aligned}
\right.
\]
 Let us consider the measure
$ \mu=\frac13 \delta_{x_1}+\frac13 \delta_{x_2}+\frac13
\delta_{x_3}$. We have to minimize the quantity
\[
\|\eeta\ast\mu\|_{L^2(\mathbb{R},\mu;\mathbb{R})}=\frac{1}{27}\sum_{j=1}^3\left|\sum_{i=1}^3\eeta(x_j-x_i)\right|^2
\]
among all measurable antisymmetric selections $\eeta$ in $\partial
W$.

If we let $x_1=1$, $x_2=0$, $x_3=3/4$, the only points where it is
needed to select are $\pm 1$, corresponding to $\pm (x_1-x_2)$,
hence expanding the sum above (using the antisymmetry) it is clear
that we reduce to find the minimizer of
\[
\min\{\eeta(x_1-x_2)^2+\eeta(x_1-x_2)(\eeta(x_1-x_3)-\eeta(x_2-x_3)):\eeta(x_1-x_2)\in[-1,1]\}.
\]
Here $x_1-x_3=1/4$ (so that $\eeta(x_1-x_3)=-1/4$) and
$x_2-x_3=-3/4$ (so that $\eeta(x_2-x_3)=3/4$). Then, letting
$y=\eeta(x_1-x_2)$, we are left with the problem
$\min\{y^2-y:y\in[-1,1]\}$, whose solution is $y=1/2$. This is
different from the element of minimal norm in $\partial
W(x_2-x_1)$, which of course is $0$.

We also point out that in this non convex case the choice of the
selection is not independent from the measure $\mu$. Indeed, if we
change the value of $x_3$ to be, for instance, $-1/4$, we have
$\eeta(x_1-x_3)=5/4$ and $\eeta(x_2-x_3)=-1/4$, then we have to
solve $\min\{y^2+3y/2:y\in[-1,1]\}$, and the solution is
$y=\eeta(1)=-\eeta(-1)=-3/4$.
\end{example}

\begin{example}\upshape{
The result of Theorem \ref{radial} fails if we omit the radial
hypothesis on $W$. As a counterexample we provide a convex
function $\mathbf{W}$ satisfying all the assumptions
\eqref{sym},\eqref{convex}, \eqref{growth}, \eqref{difference} and
a measure $\mu\in\PP_2(\mathbb{R}^2)$ such that
\[
\partial^o \mathcal{W}(\mu)\neq (\partial^o W)\ast\mu.
\]

Let the graph of $W$ be a pyramid
with vertex in the origin, with varying slopes, given by
\[
\nabla W(x,y)=\left\{\begin{array}{rl}\vspace{6pt} (1,0)\quad
&\mbox{for}\quad  0<x<1,\; -x<y<x,\\
(\theta,0)\quad &\mbox{for}\quad x>1,\; -x<y<x,
\end{array}\right.
\]
where $\theta>2$. $W$ is then defined by symmetry, such that its
level sets are squares centered in the origin. Let $\mu=\frac13
\delta_{x_1}+\frac13 \delta_{x_2}+\frac13\delta_{x_3}$, where
\[
x_2-x_1=(1,1),\quad x_3-x_2=(-1/2-\eps,1/2),\quad
x_3-x_1=(1/2-\eps,3/2).
\]
Among these points, for small enough $\eps>0$, $\partial W$ is not
a singleton only at $x_2-x_1$, and in particular it is the convex
set $K$ of $\mathbb{R}^2$ defined as
\[
K=\{(x,y)\in\mathbb{R}^2:\; x\geq 0,\;y\geq 0,\; 1-x\leq y\leq
\theta-x\}.
\]
We let $\eeta_{ij}$, $i,j=1,2,3$ denote the generic element of
$\partial W$ at $x_i-x_j$. In this particular case
\[
\partial^o\mathcal{W}(\mu)={\mathrm{argmin}}\left\{\frac1{27}\,\sum_{j=1}^3\left|\sum_{i=1}^3
\eeta_{ji}\right|^2: \eeta\in\partial W,
\eeta_{ji}=-\eeta_{ij}\right\}.
\]
Since $\eeta_{13}=\nabla W(x_1-x_3), \eeta_{23}=\nabla
W(x_2-x_3)$, we are left to minimize with respect to the unique
variable $\eeta_{21}$, that is, the minimization problem above
reduces to
\[
\min_{\eeta_{21}\in K}
|-\eeta_{21}+\eeta_{13}|^2+\frac{1}{27}\,|\eeta_{23}+\eeta_{21}|^2+\frac{1}{27}|\eeta_{32}+\eeta_{31}|^2.
\]
We have $\eeta_{13}=(0,-\theta)$ and $\eeta_{23}=(1,0)$, and hence
it is immediate to check the solution is the minimizer of
\begin{equation}\label{quadratic}
|\eeta_{21}|^2+\langle\eeta_{21},(1,\theta)\rangle.
\end{equation}
If $\eeta=\partial^o W$ we have that $\eeta_{21}$ is the element
of minimal norm in $K$, that is $(1/2,1/2)$, and in this case the
quantity above takes the value $1+\theta/2$. But $\theta>2$, so
that the minimum value in $K$ of the quadratic expression
\eqref{quadratic} is $2$, attained in a different point, that is
$\eeta_{21}=(1,0)$. }
\end{example}


\section{Particle system}\label{particlesection}

As in \cite{CDFLS}, the well-posedness result in Theorem
\ref{gradientflow} of measure solutions allows to put in the same
framework particle and continuum solutions. Assume that we are
given $N$ pointwise particles, each carrying a mass $m_i$, with
$\sum_i m_i=1$. Let $x_i(t)$ be the position in $\rd$ of the
$i$-th particle at time $t$. Let $\eeta^1(x,y):\rdrd\to\rd$ and
$\eeta^2(x,y):\rdrd\to\rd$ be selections in the subdifferentials
$\partial_1\mathbf{W}$ and $\partial_2\mathbf{W}$ respectively.
 We consider the system
\[
\frac{dx_i}{dt}=\frac12\sum_{j=1}^N m_j
(\eeta^1(x_j-x_i)+\eeta^2(x_i-x_j)),\qquad i=1,\ldots, N.
\]
If the $x_i$ are absolutely continuous curves, the empirical
measure $\mu(t)=\sum_{i=1}^N m_i\delta_{x_i(t)}$ solves the PDE
\[
\frac{d}{dt}
\mu(t)-\frac12\,\div\left(\left(\int_{\rd}\eeta^1(\cdot,y)+\eeta^2(y,\cdot)\,d\mu(y)\right)\mu(t)\right)=0.
\]
Among these selections, we have the minimal one in $\partial
\mathcal{W}(\mu)$, hence we have correspondence between our
equation \eqref{particular} and the ODE system above. In
particular, if we are in the framework of assumption
\eqref{difference}, the velocity vector field of the continuity
equation is written as a convolution and the corresponding ODEs
takes the form
\begin{equation}\label{particle}
\frac {dx_i}{dt}=\sum_{j=1}^N m_j\eeta_{ji},\qquad i=1,\ldots, N,
\end{equation}
where $\eeta_{ij}:=\eeta(x_i-x_j)$ and $\eeta$ is the suitable
measurable anti-symmetric selection in $\partial W$.

\begin{remark}[\textbf{Characterization of the element of minimal norm in the radial convex case for particles}]\rm
In the particular case of a system of particles, it is more
immediate to see how the proof of Theorem \ref{radial} works.
Actually, it was the origin of the idea in how to get the proof in
previous section. We include it since we believe it is an
instructive proof. Suppose that $W$ is convex, so that the minimal
selection in $\partial \mathcal{W}(\mu)$ is expected, after
Theorem \ref{radial}, to be $\partial^o W\ast\mu$. In fact, we
have in this case
\begin{equation*}
\mu=\sum_{i=1}^N m_i\delta_{x_i}(x),\qquad m_1+\ldots+m_N=1,
\end{equation*}
where $x_i\in \rd$. After Theorem \ref{minimalselectionchar}, we
know that we have to search for the  Borel antisymmetric selection
$\eeta$ in the subdifferential of $W$ that minimizes the
$L^2(\rd,\mu;\rd)$ norm of $\eeta\ast\mu$. We have
\[
\|\eeta\ast\mu\|^2_{L^2(\rd,\mu;\rd)}=\sum_{j=1}^N
m_j\left|\sum_{i=1}^N m_i \eeta(x_j,x_i)\right|^2.
\]
For simplicity we are letting again $\eeta(x_j,x_i)=\eeta_{ji}$.
Hence we have to solve the problem
\begin{equation*}
\min\left\{\sum_{j=1}^N m_j\left|\sum_{i=1}^N m_i
\eeta_{ji}\right|^2: \eeta\in\partial W,
\eeta_{ji}=-\eeta_{ij}\right\}.
\end{equation*}
Suppose first that $W$ is differentiable in all the points
$x_j-x_i$. In this case we have $\eeta_{ji}=\nabla W(x_j-x_i)$,
and the quantity to minimize simply takes the (single) value
\[
\sum_{j=1}^N m_j\left|\sum_{i=1}^N m_i \nabla W(x_j-x_i)\right|^2,
\]
which is then the square norm of the minimal selection in
$\partial\mathcal{W}$. Otherwise, taking advantage of the
anti-symmetry of each selection, let us rearrange the terms
highlighting  $\eeta_{JI}$, where $I$ and $J$ are fixed nonequal
numbers between $1$ and $N$, and let $x_J-x_I=x_0$. In order to
keep simple, we make the assumption that no other couple $(i,j)$
is such that $x_j-x_i=x_0$. This way, it is not difficult to see
that, expanding the squares, we can write
\begin{equation}\label{rearrangement}
\sum_{j=1}^N m_j\left|\sum_{i=1}^N m_i \eeta_{ji}\right|^2=m_I
m_J(m_I+m_J) |\eeta_{JI}|^2 +2m_I m_J\sum_{{k=1}\atop{k\neq
J,\,I}}^N
m_k\left\langle\eeta_{JI},(\eeta_{Jk}-\eeta_{Ik})\right\rangle +R,
\end{equation}
where $R$ does not depend on the value of $\eeta$ at $\pm x_0$. We
claim that each scalar product in the sum above is nonnegative,
since by convexity of $W$, any selection $\eeta_{ij}$ in $\partial
W(x_i-x_j)$ is monotone in the sense that
\[
\langle
\eeta_{jk}-\eeta_{ik},x_j-x_k-x_i+x_k\rangle=\langle\eeta_{jk}-\eeta_{ik},x_j-x_i\rangle\geq
0.
\]
By assumption {\it v)}, W is radial ($W(\cdot)=w(|\cdot|)$),
and if $\eeta_{ji}$ is a selection in $\partial w
(|x_j-x_i|)$, then $\eeta_{ji}$ is positive by convexity of $W$ and
\[
\left\langle\eeta_{ji},\eeta_{jk}-\eeta_{ik}\right\rangle=\frac{\eeta_{ji}}{|x_j-x_i|}\,\langle
x_j-x_i,\eeta_{jk}-\eeta_{ik}\rangle\geq 0\quad \forall i,j,k.
\]
This shows that the claim is correct. Then if we minimize in
\eqref{rearrangement} only with respect to the admissible values
of $\eeta_{JI}$, we see that the minimum argument is found if we
take $\eeta_{JI}=\partial^o W(x_J-x_I)$ (and the same for
$\eeta_{IJ}$, by anti-symmetry). But by construction, such minimum
argument does not depend on the value of $\eeta$ in the other
points.
We conclude that if $\eeta$ is a solution of the full minimization
problem, then $\eeta_{JI}$ is forced to be the element of minimal
norm in $\partial W(x_J-x_I)$. By the arbitrariness of $I,J$ we
conclude: the solution is $\partial^o W\ast\mu$.
\end{remark}

\begin{remark}[\textbf{Collapse}]\rm
In the radial-convex case, the finite-time collapse argument in
\cite{CDFLS} carries over. Indeed, it is enough to substitute
$\nabla W$ therein with the new object $\partial^o W$. Consider
the equation \eqref{particle} for a system of $N$ particles. We
notice that, since $\partial^o W$ is anti-symmetric, the center of
mass $\frac1N\sum_{j=1^N}x_j$ does not move during the evolution.
We denote such point by $x_0$. Then, assuming for simplicity that
the weights are uniform, there holds
\[
\frac{d(x_i-x_0)}{dt}=-\frac1N \sum_{j=1}^N\partial^o
w(|x_i-x_j|)\frac{x_i-x_j}{|x_i-x_j|}.
\]
Multiplying by $x_i-x_0$ we find
\begin{equation}\label{particleradial} \frac12\frac
d{dt}|x_i-x_0|^2=-\frac1N \sum_{j=1}^N\partial^o
w(|x_i-x_j|)\frac{\langle x_i-x_j,x_i-x_0\rangle}{|x_i-x_j|}.
\end{equation}
Now, let $R(t):=\max_{i\in\{1,\ldots,N\}}\{|x_i(t)-x_0|\}$.  If
$I\in\{1,\ldots,N\} $ is such that $|x_I(t)-x_0|=R(t)$,  at that
time we clearly have $\langle x_I-x_j,x_I-x_0\rangle\geq 0$. For
any time and for any $I$ with this property, from
\eqref{particleradial} we see that the derivative of $|x_I-x_0|^2$
is nonpositive, so that its maximum governs the evolution of $R$:
\[\begin{aligned}
\frac12\frac{d}{dt} R^2(t)&=\max_{\{i:x_i(t)=R(t)\}}-\frac1N
\sum_{j=1}^N\partial^o
w(|x_i-x_j|)\frac{\langle x_i-x_j,x_i-x_0\rangle}{|x_i-x_j|}\\
&\leq \max_{\{i:x_i(t)=R(t)\}}-\frac1N
\frac{\Omega}{2R(t)}\sum_{j=1}^N
{\langle x_i-x_j,x_i-x_0\rangle}\\
&\leq \max_{\{i:x_i(t)=R(t)\}}-\frac1N
\frac{\Omega}{2R(t)}\,|x_i-x_0|^2=-\frac{\Omega}{2N} R(t),
\end{aligned}
\]
where $\Omega=\inf_{(0,+\infty)}\partial^o w$. Here we used
the elementary bound $|x_i(t)-x_j(t)|\leq 2R(t)$ and the fact that
$\sum_{j=1}^N\langle x_I-x_j,x_I\rangle=|x_I-x_0|^2$ if
$I\in\{1,\ldots,N\}$ is such that $|x_I(t)|=R(t)$. The solution
then reaches the asymptotic state $\mu_\infty=\delta_{x_0}$ in
finite time if $\Omega>0$. We can also generalize the result to
the case $\Omega=0$ under suitable assumptions on the behavior of
$\omega$ at the origin such that a Gronwall estimate can be
achieved. For instance if we ask $\frac{\partial^o w(x)}{x}$
to be decreasing on some interval $(0,\eps)$ we are done. For all
the details and a more general discussion on this issue we refer
to \cite{CDFLS}. We only remark that if we have finite time
collapse in the center of mass for a system of particles, by the
general stability properties of gradient flow solutions  we can
also deduce the finite time collapse for general compactly
supported initial data.
\end{remark}


\section{Examples of asymptotic states in the non convex
case}\label{asymptoticsection} In this section, suppose that
assumptions \eqref{difference} and \eqref{distance} hold. We have
already seen in the case of particle collapse that, for convex
nonnegative potentials, there is the asymptotic state consisting
of a single Dirac mass. It is easily seen that any Dirac mass
minimizes functional $\mathcal{W}$. So, in this case, by the
translation invariance property, the only asymptotic solution is
in fact the Dirac mass in the center of mass of the initial datum.

In the repulsive-attractive case, one can expect a much bigger
variety of stationary solutions. This is the case as proved in 1D
in \cite{FellnerRaoul1,FellnerRaoul2} in which the authors show
that the set of stationary states for short-range repulsive
long-range attractive potentials can be very large and
complicated. Moreover, they give examples in which the stationary
states are composed of a finite number of Dirac deltas at points
and others in which one has integrable compactly supported
stationary solutions. They also show that to have integrable or
not stationary states depends in how strong the repulsion is at
the origin. Some numerical computations indicates that this is
also the case in more dimensions \cite{KSUB}. They show that the
set of stable stationary states can be large and with complicated
supports arising from instability modes of the uniform
distribution on a sphere. Finally, in \cite{BCLR} we have studied
the stability for radial perturbations of the uniform distribution
on a suitable sphere for general radial repulsive-attractive
potentials. We will show a related example below.

In this section we are going to present some examples, based on
the available characterization of the velocity vector field of the
continuity equation of properties of the set of stationary
solutions. In particular, we will consider the $(-1)$-convex
potentials in one dimension given by
\begin{equation*}
\widetilde{W}(x)=\frac12\,|x^2-1|^2,\qquad
\widehat{W}(x)=\frac12\,|x^2-1|.
\end{equation*}
Both cases correspond to attractive-repulsive potentials with the
same behavior in the repulsive part at the origin. However, the
change from repulsive to attractive in one case is smooth and in
the other, it is only Lipschitz. In fact, in the two cases there
many analogies, but also some different behaviors, as we are going
to show with the next propositions. First of all, we search for
stationary states made by a finite number of particles.

\begin{pro}
There exist two-particles stationary states for $\tw$ and $\hw$.
\end{pro}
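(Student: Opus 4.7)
The plan is to exhibit a symmetric two-particle configuration $\mu = \tfrac12(\delta_{-1/2}+\delta_{1/2})$ and verify it is stationary for both potentials by checking $\partial^o\mathcal{W}(\mu)=0$ in $L^2(\mu)$. Since by Theorem \ref{gradientflow} the gradient flow solution starting from such $\mu$ has velocity $\vv(t)=-\partial^o\mathcal{W}(\mu(t))$, producing a measure $\mu$ for which the minimal selection in the Wasserstein subdifferential vanishes $\mu$-a.e. is enough to conclude stationarity (the constant curve $t\mapsto\mu$ is then the unique gradient flow solution).

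For $\tw(x)=\tfrac12(x^2-1)^2$, which is $C^1$, I would apply Proposition \ref{smooth}: $\partial_S\mathcal{W}(\mu)$ is the singleton $\{\tw'\ast\mu\}$, and since $\partial_S\mathcal{W}\subseteq\partial\mathcal{W}$, if this element vanishes $\mu$-a.e.\ then $0\in\partial\mathcal{W}(\mu)$, hence $\partial^o\mathcal{W}(\mu)=0$. I then compute, using $\tw'(x)=2x(x^2-1)$,
\begin{equation*}
(\tw'\ast\mu)(\pm\tfrac12)=\tfrac12\tw'(0)+\tfrac12\tw'(\mp 1)=0.
\end{equation*}

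For $\hw(x)=\tfrac12|x^2-1|$, which is non-differentiable at $x=\pm1$ with $\partial\hw(\pm1)=[-1,1]$, I would invoke Corollary \ref{coro}, since $\hw$ depends only on the difference variable. Define the antisymmetric Borel selection $\eeta\in\partial\hw$ by $\eeta(x)=-x$ for $|x|<1$, $\eeta(x)=x$ for $|x|>1$, and $\eeta(\pm 1)=0$ (this choice lies in $[-1,1]=\partial\hw(\pm 1)$ and preserves antisymmetry). By Corollary \ref{coro} the convolution $\eeta\ast\mu$ belongs to $\partial_S\mathcal{W}(\mu)\subseteq\partial\mathcal{W}(\mu)$, and a direct computation gives
\begin{equation*}
(\eeta\ast\mu)(\pm\tfrac12)=\tfrac12\eeta(0)+\tfrac12\eeta(\mp 1)=0,
\end{equation*}
so again $0\in\partial\mathcal{W}(\mu)$ and therefore $\partial^o\mathcal{W}(\mu)=0$.

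I do not anticipate a serious obstacle: the only subtle point is that for $\hw$, being only $\lambda$-convex with $\lambda<0$, Theorem \ref{radial} does not apply and one must not confuse $\partial^o\hw\ast\mu$ with $\partial^o\mathcal{W}(\mu)$. The nontrivial freedom in choosing the selection at the singular points $\pm 1$ is precisely what allows $\partial^o\mathcal{W}(\mu)$ to vanish, which is the whole content of the example; once the correct selection is identified, the verification is immediate.
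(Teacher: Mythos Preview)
Your proof is correct and follows essentially the same approach as the paper: choose two particles at distance $1$ so that the interaction sits at the nondifferentiability point, and use that $0\in\partial\hw(\pm1)$ (respectively $\tw'(\pm1)=0$) to make the velocity vanish. The paper argues via the particle ODE system and, for $\hw$, by explicitly minimizing $\|\eeta\ast\mu\|^2_{L^2(\mu)}$ over antisymmetric selections; it also observes that this works for \emph{any} weights $m_1,m_2$ and any positions with $|x_1-x_2|=1$, yielding infinitely many two-particle stationary states rather than just the symmetric one you exhibit (and note a harmless sign slip: your convolution should read $\tfrac12\tw'(0)+\tfrac12\tw'(\pm1)$, not $\mp1$).
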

\begin{proof}
Let $\mu=m_1\delta_{x_1}+m_2\delta_{x_2}$, with $m_1+m_2=1$. For
functional $\tw$ we have to impose the condition
\begin{equation}\label{2equations}
\frac {dx_i}{dt}=m_j\nabla\tw(x_j-x_i)=0,
\end{equation}
for $i,j=1,2, i\neq j$. But $\nabla \tw(x)=2x(x^2-1)$. Hence it is
enough to choose $x_1,x_2$ such that $|x_1-x_2|=1$, independently
of the weights. We see that we have infinitely many stationary
states made by two Dirac deltas. In the case of $\hw$, the
subdifferential is
\begin{equation}\label{subgradient}
\partial\hw(x)=\left\{\begin{aligned}x\quad&\mbox{if
$|x|>1$},\\
-x\quad&\mbox{if $|x|<1$},\\
[-1,1]\quad&\mbox{if $|x|=1$}.
\end{aligned}\right.
\end{equation}
In order to find a stationary state, we have to solve
\eqref{2equations} with $\eeta$ in place of $\nabla\widetilde{W}$,
where  $\eeta$ is the suitable anti-symmetric selection in
$\partial\widehat{W}$ realizing the minimal norm in
$\partial\mathcal{W}(\mu)$ (see Corollary \ref{coro}). Such
selection is then found minimizing the quantity
\[
\begin{aligned}
\|\eeta\ast\mu\|^2_{L^2(\mathbb{R},\mu;\mathbb{R})}=\sum_{j=1}^2
m_j \left|\sum_{i=1}^2 m_i\eeta(x_j-x_1)\right|^2
&=m_1m_2(m_1+m_2)(\eeta(x_1-x_2))^2
\end{aligned}
\]
among the admissible selections $\eeta$ (we are using the
anti-symmetry). Let again $|x_2-x_1|=1$, so that it is clear that
the minimum above is zero, attained for $\eeta(1)=-\eeta(-1)=0$.
And this way, the two equations \eqref{2equations} are still
satisfied.
\end{proof}

\begin{pro}
For both functionals $\tw$ and $\hw$,  there are no absolutely
continuous stationary states in one dimension.
\end{pro}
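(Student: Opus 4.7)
The plan is to use that a stationary state for the gradient flow is characterized by $v:=\partial^o\w(\mu)=0$ $\mu$-a.e., and hence on the essential support $E$ of $\rho$, which has positive Lebesgue measure (from Theorem \ref{gradientflow}). Since $\mu\otimes\mu$ is absolutely continuous and the non-differentiability set of $\tw$ or $\hw$ is contained in the $\mu\otimes\mu$-null set $\{(x,y):x-y\in\{\pm 1\}\}$, Corollary \ref{coro} gives
\[
v(x)=\int_{\R}\nabla W(x-y)\,\rho(y)\,dy \qquad \text{with $\nabla W$ defined a.e.,}
\]
the specific Borel selection being immaterial.

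For $W=\tw$, $\nabla\tw(z)=2z^{3}-2z$ is a polynomial, so after expanding against the moments $m_{k}=\int y^{k}\rho(y)\,dy$ (finite after a preliminary truncation/compact-support reduction) we get
\[
v(x)=2x^{3}-6m_{1}x^{2}+(6m_{2}-2)x+2m_{1}-2m_{3}.
\]
The leading coefficient is $2\ne 0$, hence $v$ has at most three zeros. This contradicts $v\equiv 0$ on the positive-measure set $E$.

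For $W=\hw$, $\nabla\hw(z)=z(1-2\mathbf{1}_{|z|<1})$ a.e., giving
\[
v(x)=(x-m_{1})-2\int_{x-1}^{x+1}(x-y)\rho(y)\,dy.
\]
Since the second term is bounded in absolute value by $1$, $|v(x)|\geq |x-m_{1}|-2$, so $E\subset[m_{1}-2,m_{1}+2]$ and $E$ is bounded. Set $a=\inf E$, $b=\sup E$, $D=b-a$. If $D\leq 1$, then for every $x\in E$ one has $[x-1,x+1]\supset E$, the integral reduces to $x-m_{1}$ and $v(x)=-(x-m_{1})$; the condition $v\equiv 0$ on $E$ forces $E\subset\{m_{1}\}$, contradicting absolute continuity. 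If $D>1$, the plan is to exploit the distributional derivative
\[
v'(x)=1-2\bigl(F(x+1)-F(x-1)\bigr)+2\bigl(\rho(x+1)+\rho(x-1)\bigr),
\]
where $F$ is the c.d.f.\ of $\rho$: since $v$ is Lipschitz and vanishes on $E$, $v'=0$ almost everywhere on $E$. Whenever $D<2$ and $E\cap(b-1,a+1)$ has positive measure, at such $x$ one has $F(x+1)=1$, $F(x-1)=0$ and $\rho(x\pm 1)=0$, and the equation reduces to $-1=0$.

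The main obstacle is twofold: first, in the $\hw$ case with $D\in(1,2)$ and $E$ avoiding the interval $(b-1,a+1)$ (so that $E\subset[a,b-1]\cup[a+1,b]$); and second, the range $D\geq 2$. For the first, a separate analysis on each component of $E$ together with the endpoint identities obtained from $v(a)=v(b)=0$ gives the contradiction. For $D\geq 2$, one iterates the differentiation: $v''\equiv 0$ on $\mathrm{int}(E)$ yields the linear delay relation $\rho(x-1)-\rho(x+1)+\rho'(x-1)+\rho'(x+1)=0$, which combined with $v'\equiv 0$ and the compactness of $E$ forces $\rho\equiv 0$. By contrast, the $\tw$ case is immediate from the polynomial structure.
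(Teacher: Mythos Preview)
Your argument for $\tw$ via the polynomial structure of $\nabla\tw$ is clean and more direct than what the paper does (the paper simply says the $\tw$ case is ``analogous'' to its $\hw$ computation). The only loose end is the phrase ``finite after a preliminary truncation/compact-support reduction'': since $\nabla\tw$ is cubic and $\mu$ is only assumed to lie in $\PP_2(\R)$, you owe an actual argument that the third moment $m_3$ is finite (or that the support is compact) before the expansion is legitimate.

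The $\hw$ case, however, is not complete. You yourself flag the ``main obstacle'' and then offer only sketches. For $D\ge 2$ you pass to $v''$ and write down a delay relation involving $\rho'(x\pm1)$; but $\rho$ is merely an $L^1$ density, so there is no reason for $\rho'$ to exist, and the assertion that the delay relation ``forces $\rho\equiv 0$'' is nowhere justified. The subcase $D\in(1,2)$ with $E\subset[a,b-1]\cup[a+1,b]$ is likewise only described (``a separate analysis \dots\ gives the contradiction'') rather than carried out. These are genuine gaps, not routine details.

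The paper avoids all of this case analysis. Fixing the center of mass at $0$, it writes the stationarity condition as
\[
2x\,\Theta(x)-x=2\!\int_{\{|x-y|>1\}}y\rho(y)\,dy,\qquad \Theta(x):=\mu(\{y:|x-y|>1\}),
\]
and differentiates once in $x$. This yields a first-order linear ODE $\Theta'(x)=-\Theta(x)+\tfrac12$, whose solutions are $\Theta(x)=ke^{-x}+\tfrac12$; integrability of $\rho$ forces $k=0$, hence $\rho(x-1)=\rho(x+1)$, contradicting $\rho\in L^1$. The point is that one differentiation already produces a closed relation for the single auxiliary function $\Theta$, with no need to split on the diameter of the support or to assume extra regularity of $\rho$. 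I would recommend replacing your case analysis for $\hw$ by this ODE argument.
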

\begin{proof}
Let us consider functional $\hw$. The argument is based on the
fact that, if $\mu$ is absolutely continuous, it does not charge
the points of non-differentiability of $\hw$. For a measure $\mu$
to be stationary, we have to verify that the corresponding
velocity vector field vanishes. That is $ \eeta\ast\mu=0, $ where
$\eeta$ is the usual optimal selection in $\partial \hw$, as in
Corollary \ref{coro}. Suppose that $\mu$ is a stationary state and
that $\mu=\rho\mathcal{L}^1$, for some $\rho\in L^1(\mathbb{R})$,
then
\[
\int_{\{|x-y|>1\}}(x-y)\rho(y)\,dy-\int_{\{|x-y|<
1\}}(x-y)\rho(y)\,dy=0.
\]
By the translation invariance property, we can fix without loss of
generality the center of mass, so we let $\int_{\mathbb{R}}
y\rho(y)\,dy=0$. We deduce
\[
2x\int_{\{|x-y|>1\}}\rho(y)\,dy-x=2\int_{\{|x-y|>1\}}y\rho(y)\,dy,
\]
hence
\[
2x\left(\int_{-\infty}^{x-1}\rho(y)\,dy+\int_{x+1}^{+\infty}\rho(y)\,dy\right)-x=2\left(\int_{-\infty}^{x-1}y\rho(y)\,dy
+\int_{x+1}^{+\infty}y\rho(y)\,dy\right).
\]
Let us denote the term in the parenthesis in the left side by
$\Theta(x)$ and let us take the derivative with respect to $x$. We
have
\[
2\Theta(x)+
2x(\rho(x-1)-\rho(x+1))-1=2((x-1)\rho(x-1)-(x+1)\rho(x+1)),
\]
which yields $2\Theta(x)-1=-2(\rho(x-1)-\rho(x+1))$, that is
$\Theta^\prime(x)=-\Theta(x)+\frac12$. We find
\[
\Theta(x)=ke^{-x}+\frac12,\quad k\in\mathbb{R},
\]
then  $\rho(x-1)-\rho(x+1)=-ke^{-x}$. But the integral of $\rho$
is $1$, so $k=0$ and we are left with $\rho(x-1)=\rho(x+1)$. This
is a contradiction, since $\rho$ can not be periodic in this case.
The proof for $\tw$ is analogous, we omit the details.
\end{proof}

The following are more examples of stationary states

\begin{example}\rm
There are stationary states for functional $\hw$ of the form
\[
\mu=m_1\delta_{x_1}+m_2\delta_{x_2}+m_3\delta{x_3},
\]
with $m_1+m_2+m_3=1$.
 Indeed, we have to verify that
\begin{equation}\label{three}\left\{\begin{aligned} \frac
{dx_1}{dt}&=m_2\eeta(x_2-x_1)+m_3\eeta(x_3-x_1)=0\\
\frac{dx_2}{dt}&=m_1\eeta(x_1-x_2)+m_3\eeta(x_3-x_2)=0\\
\frac{dx_3}{dt}&=m_1\eeta(x_1-x_3)+m_2\eeta(x_2-x_3)=0,
\end{aligned}\right.\end{equation}
where, as usual, $\eeta$ represents the anti-symmetric selection
in the subdifferential \eqref{subgradient} given by Corollary
\ref{coro}.

 For instance, let us search for a solution in the
following range
\begin{equation}\label{range}
x_2-x_1=1,\quad x_3-x_1>1,\quad 0<x_3-x_2<1. \end{equation} We
begin searching for the right selection. We use the notation
$\eeta_{ij}:=\eeta(x_i-x_j)$. Clearly we need to select only at
$\pm(x_2-x_1)$. Taking the anti-symmetry into account, and
recalling that the subdifferential of $\widehat{W}$ is
\eqref{subgradient} and that the relations \eqref{range} hold,
there is
\[
\begin{aligned}
\|\eeta\ast\mu\|^2_{L^2(\mathbb{R},\mu;\mathbb{R})}&=\sum_{j=1}^3
m_j \left|\sum_{i=1}^3 m_i\eeta(x_j-x_1)\right|^2\\
&=m_1(m_2^2\eeta_{12}^2+2m_2m_3(x_1-x_3)\eeta_{12})+m_2(m_1^2\eeta_{21}^2+2m_1m_3(x_3-x_2)\eeta_{21})+R_1\\
&=m_1m_2(m_1+m_2)\eeta_{12}^2+2m_1m_2m_3(x_1+x_2-2x_3)\eeta_{12}+R_2,
\end{aligned}
\]
where the remainders $R_1,R_2$ do not depend on the value of
$\eeta$ at $\pm1$, hence we only have to minimize with respect to
$\eeta_{12}$ on the interval $[-1,1]$. We have a quadratic
function, so that if the vertex of the parabola is  on the right
of the interval $[-1,1]$, then the minimizer is found for
$\eeta_{12}=1$, hence we get $\eeta(-1)=1$ and then, by
anti-symmetry, $\eeta(1)=-1$. Computing the vertex position, this
condition is
\begin{equation}\label{vertex}
    \frac{m_3(x_3-x_2)+m_3(x_3-x_1)}{m_1+m_2}\,\geq1.
\end{equation}
Hence, if such condition holds, making use of \eqref{subgradient}
the first two equations in \eqref{three} reduce to
\[
\left\{\begin{aligned}&-m_2+m_3(x_3-x_1)=0,\\
&m_1-m_3(x_3-x_2)=0.\end{aligned}\right.
\]
We deduce
\[
x_3-x_1=\frac{m_2}{m_3}\quad\mbox{and}\quad
x_3-x_2=\frac{m_1}{m_3},
\]
and since $1=x_2-x_1=(x_3-x_1)-(x_3-x_2)$, we have
\[
m_3=m_2-m_1.
\]
We may pose, for $0<\alpha<\frac14$,
\[
m_1=\frac14 -\alpha,\quad m_2=\frac12,\quad m_3=\frac14 +\alpha.
\]
Then we find
\begin{equation}\label{differences}
x_3-x_1=\frac2{1+4\alpha},\quad
x_3-x_2=\frac{1-4\alpha}{1+4\alpha}.
\end{equation}
This way, the constraints in \eqref{range} are satisfied.
Moreover, it is immediate to check that \eqref{three} is solved
and that

\[
\frac{m_3(x_3-x_2)+m_3(x_3-x_1)}{m_1+m_2}\,=1,\] so that
\eqref{vertex} is verified and the computation is indeed
consistent.

 In order to find
the three points, we can for instance fix the center of mass in
the origin:
\[
\frac{1-4\alpha}{4}\,x_1+\frac12\,x_2+\frac{1+4\alpha}{4}\,x_3=0.
\]
Together with \eqref{differences}, this gives
\[
x_1=-1,\quad x_2=0,\quad x_3=\frac{1-4\alpha}{1+4\alpha}.
\]
We conclude that, for $0<\alpha<\frac14$,
\[
\mu=\left(\frac14
-\alpha\right)\delta_{-1}+\frac12\,\delta_0+\left(\frac14
+\alpha\right)\delta_{\frac{1-4\alpha}{1+4\alpha}}
\]
is a stationary state.
\end{example}

\begin{remark}\rm
In the case of $\widetilde{W}$, it is very easy to construct an
analogous example, solving system \eqref{three}, where this time
the actual gradient of $\widetilde{W}$ appears. In both cases, it
seems clear that the procedure can be repeated for finding
infinitely many stationary states with $N$ Dirac masses for any
$N>3$.
\end{remark}

We conclude with an example in two space dimensions. The reference
functional is simply the radial of $\hw$, still denoted by $\hw$,
that is
\[
\hw(x)=\frac12 ||x|^2-1|,\quad x\in\mathbb{R}^2.
\]
The functional is still $(-1)$-convex, and in this case
\[
\partial\widehat{W}(x)=\left\{\begin{aligned}x\quad&\mbox{ if $|x|>1$}\\
-x\quad &\mbox{ if $|x|\leq 1$}\\
[-1,1]{x}\quad&\mbox{ if $|x|=1$.}
\end{aligned}\right.
\]

\begin{example}\label{circle}\rm
Let $\sigma_R$ denote the uniform measure on the circumference
$\partial B_R(0)$, of radius $R$, centered in the origin. There
exists $R>0$ such that the measure $\sigma_{R}$ is a stationary
state for functional $\hw$.

Indeed, we can show that for any $x\in\partial B_R(0)$ there holds
$\eeta\ast\sigma_R=0$ for a suitable choice of the radius $R$,
$\eeta$ being the optimal selection of Corollary \ref{coro}.
Explicitly, the convolution is
\[
\int_{\{|x-y|>1\}}(x-y)\,d\sigma_R(y)-\int_{\{|x-y|\leq
1\}}(x-y)\,d\sigma_R(y),
\]
and the set of points $\{y:|x-y|=1\}$, where one should select, is
negligible. Fix $x$ on the circle. We let
$(\mathbf{e}_1,\mathbf{e}_2)$ be an orthogonal base in
$\mathbb{R}^2$, where $\mathbf{e}_1$ is the direction of $x$, so
that $x=R\mathbf{e}_1$. Hence we have to solve
\[
R\mathbf{e}_1\sigma_R(\{|x-y|\!>1\})-R\mathbf{e}_1\sigma_R(\{|x-y|\!\leq1\})
-\int_{\{|x-y|>1\}}y\,d\sigma_R(y)+\int_{\{|x-y|\leq
1\}}y\,d\sigma_R(y)=0.
\]
We write the integrals in polar coordinates with respect to the
origin and the vector $\mathbf{e}_1$. In this system, we let
$\alpha=\alpha(R)$ denote the (positive) angle corresponding to
the intersection point between $\partial B_R(0)$ and the circle of
radius $1$ centered in $x$ (see Figure 1 below). In particular
\begin{equation}\label{senalfa}
\sin\alpha(R)=\frac{\sqrt{R^2-\frac14}}{R^2}.
\end{equation}
Since $y=R\cos\theta\mathbf{e}_1+R\sin\theta\mathbf{e}_2$, it is
immediately seen, by oddness of the sine function, that the
equation in the direction of $\mathbf{e}_2$ is identically
satisfied. In the direction of $\mathbf{e}_1$ we find
\[
2R\int_{\alpha}^\pi R\,d\theta-2R\int_0^\alpha
R\,d\theta-2R\int_\alpha^\pi\cos\theta
R\,d\theta+2R\int_0^\alpha\cos\theta R\,d\theta=0.
\]
Hence we have to solve
\begin{equation*}
f(R):=\pi-2\alpha(R)+2\sin\alpha(R)=0.
\end{equation*}
If $R<\frac12$, the computation does not make sense, but indeed we
can not have a stationary state for $R<\frac12$, since in this
case any point in $\partial B_R(0)$ has distance lower than $1$
from $x$, so that for each of them the effect on $x$ is a
repulsion, and $x$ tend to move far from the origin. Taking
\eqref{senalfa} into account, if $R=\frac12$, we have
$\alpha(R)=\pi$, hence the value of  $f$ at $\frac12$ is $-\pi$.
As $R$ increases from $\frac12$ to $+\infty$, the angle
$\alpha(R)$ decreases from $\pi$ to $0$. Notice that the function
\[
R\mapsto\frac{\sqrt{R^2-\frac14}}{R^2}
\]
is increasing from $\frac12$ to $\frac{\sqrt{2}}2$, where it has
its maximum, and is decreasing in $(\frac{\sqrt{2}}2,+\infty)$. On
the other hand, $\sin(x)-x$ is a decreasing function. Since
$f(\sqrt{2}/2)=2$, we conclude that $f$ has only one zero, found
in the interval $(1/2,\sqrt{2}/2)$. If $R_0$ is the zero, for
$R=R_0$ the measure $\sigma_R$ is stationary.
\end{example}

\begin{figure}[h]
\includegraphics[width=16cm]{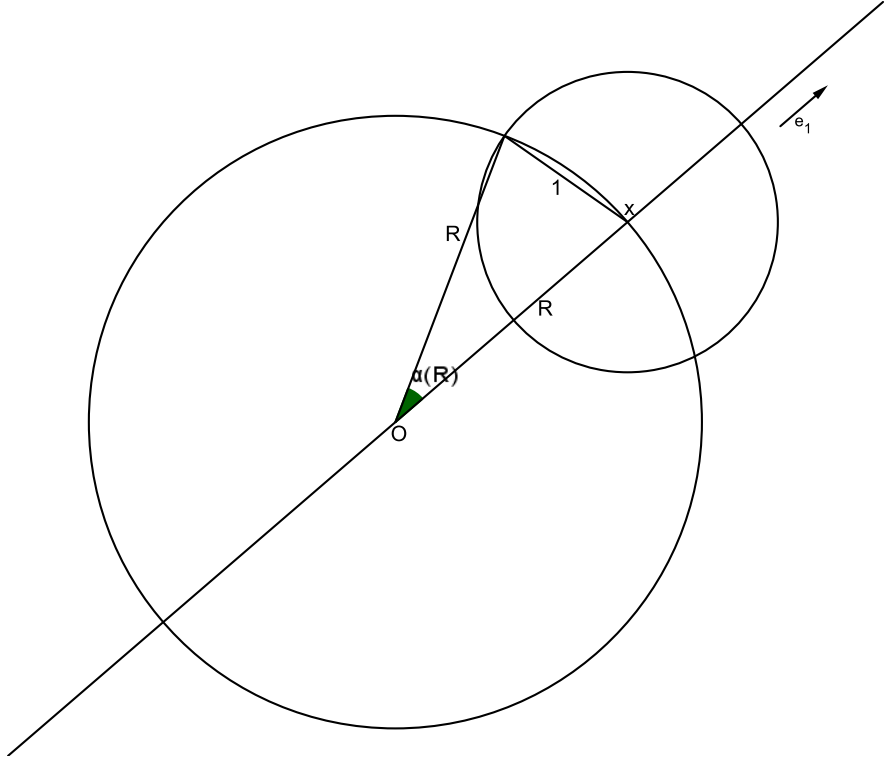}
\caption{The construction of Example \ref{circle}}
\end{figure}

\section{Appendix: vector and plan subdifferential}\label{appendix}
Here we give a more complete overview about the Wasserstein
subdifferential. In \cite[¤10.3]{AGS}, the theory is developed for
functionals $\Phi:\PP_2(\rd)\to(-\infty,+\infty]$ such that
\begin{equation}\label{hp1}
\Phi:\PP_2(\rd)\to(-\infty,+\infty]\;\mbox{ is proper and lower
semicontinuous in $\PP_2(\rd)$ }
\end{equation}
and
\begin{equation}\label{hp2}
    \Phi(\cdot)+\frac1{2\tau}\,d_W^2(\cdot,\mu)\;\mbox{ admits minimizers for any small enough $\tau>0$ and
$\mu\in\PP_2(\rd)$.}
\end{equation}
Indeed, after Remark \ref{lsc} we know that $\mathcal{W}$
satisfies these hypothesis. Hence, we are in the framework of
\cite[¤10.3]{AGS}. We will show how the results therein work for
the case of $\mathcal{W}$.

First of all, we remark that Definition \ref{subdiffdefinition} is
equivalent to the following one.
\begin{pro}\label{otherdefinition}
 $\xxi\in L^2(\rd,\mu;\,\rd)$ belongs to the Wasserstein subdifferential of
 $\mathcal{W}$ at $\mu\in\PP_2(\rd)$ if and only if
\begin{equation}\label{subdifferentialdefinition2}
\w(\nu)-\w(\mu)\geq\int_{\rd}\langle\mathbf{\xxi}(x),
y-x\rangle\,d\gamma(x,y) + o(\mathcal{C}(\mu,\nu;\gamma))
\end{equation}
as $\nu\to\mu$ in $\PP_2(\rd)$, for some optimal transport plan
$\gamma\in\Gamma_o(\mu,\nu)$. Moreover, $\xxi$ is a strong
subdifferential if and only if \eqref{subdifferentialdefinition2}
holds whenever $\nu\to\mu$ in $\PP_2(\rd)$ and
$\Gamma(\mu,\nu)\ni\gamma\to(\id,\id)_\#\mu$ in $\PP_2(\rdrd)$.
\end{pro}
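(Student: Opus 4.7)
The forward implication ($\Rightarrow$) is trivial: since $\mathcal{C}(\mu,\nu;\gamma)\to 0$ as $\nu\to\mu$ in $\PP_2(\rd)$, the remainder $\frac\lambda 2\,\mathcal{C}^2(\mu,\nu;\gamma)=\frac\lambda2\,\mathcal{C}(\mu,\nu;\gamma)\cdot o(1)$ is of order $o(\mathcal{C}(\mu,\nu;\gamma))$, so Definition \ref{subdiffdefinition} immediately implies \eqref{subdifferentialdefinition2}. The same observation handles the forward direction for $\partial_S\mathcal{W}$.

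For the converse ($\Leftarrow$) in the case of $\partial\mathcal{W}$, I would exploit $\lambda$-convexity along geodesics. Fix $\nu\in\PP_2(\rd)$ with $\nu\neq\mu$ and pick $\gamma\in\Gamma_o(\mu,\nu)$. Consider the geodesic $\mu_t:=\theta^\gamma(t)=((1-t)\pi^1+t\pi^2)_\#\gamma$ and the optimal interpolating plan $\tilde\gamma_t:=(\pi^1,(1-t)\pi^1+t\pi^2)_\#\gamma\in\Gamma_o(\mu,\mu_t)$, for which one checks directly that $\mathcal{C}(\mu,\mu_t;\tilde\gamma_t)=t\,\mathcal{C}(\mu,\nu;\gamma)$ and $\tilde\gamma_t\to(\id,\id)_\#\mu$ in $\PP_2(\rdrd)$ as $t\to 0^+$. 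Proposition \ref{firstproperties} applied to $\gamma$ yields
\[
\frac{\mathcal{W}(\mu_t)-\mathcal{W}(\mu)}{t}\leq\mathcal{W}(\nu)-\mathcal{W}(\mu)-\frac\lambda 2(1-t)\,\mathcal{C}^2(\mu,\nu;\gamma),
\]
while \eqref{subdifferentialdefinition2} applied with $\tilde\gamma_t$ gives
\[
\frac{\mathcal{W}(\mu_t)-\mathcal{W}(\mu)}{t}\geq\int_{\rdrd}\langle\xxi(x),y-x\rangle\,d\gamma(x,y)+\frac{o(t\,\mathcal{C}(\mu,\nu;\gamma))}{t},
\]
after a change of variables using the definition of $\tilde\gamma_t$. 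Combining and sending $t\to 0^+$ produces exactly \eqref{subdifferentialdefinition} for the chosen $\gamma\in\Gamma_o(\mu,\nu)$.

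For the strong subdifferential, the argument is identical but we do not restrict $\gamma$ to be optimal: for any $\gamma\in\Gamma(\mu,\nu)$ the general $\lambda$-convexity inequality \eqref{geoconv} of Proposition \ref{firstproperties} still holds, and the family $\tilde\gamma_t=(\pi^1,(1-t)\pi^1+t\pi^2)_\#\gamma$ converges to $(\id,\id)_\#\mu$ in $\PP_2(\rdrd)$, so it is admissible in the strong asymptotic condition. The same passage to the limit then yields \eqref{subdifferentialdefinition} for arbitrary $\gamma\in\Gamma(\mu,\nu)$.

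The only delicate point is justifying that $\tilde\gamma_t$ is an admissible test plan converging to $(\id,\id)_\#\mu$ in $\PP_2(\rdrd)$ (so that the asymptotic hypothesis can be applied) and that the remainder indeed decays: since $\mathcal{C}(\mu,\mu_t;\tilde\gamma_t)=t\,\mathcal{C}(\mu,\nu;\gamma)$, one has $o(\mathcal{C}(\mu,\mu_t;\tilde\gamma_t))/t=o(1)$ as $t\to 0^+$, which is precisely what makes the limit procedure produce the clean inequality with the $\frac\lambda 2\mathcal{C}^2$ term. Everything else is routine manipulation using the linearity of $\gamma\mapsto\int\langle\xxi(x),y-x\rangle\,d\gamma$ in the second variable under the affine interpolation $(1-t)\pi^1+t\pi^2$.
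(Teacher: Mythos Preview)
Your approach matches the paper's exactly: interpolate along $\theta^\gamma$, combine the $\lambda$-convexity inequality from Proposition~\ref{firstproperties} with the asymptotic hypothesis applied at time $t$, divide by $t$, and let $t\to 0^+$. The forward implications and the strong-subdifferential case are handled correctly.

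There is, however, a genuine gap in the ordinary subdifferential case. The hypothesis \eqref{subdifferentialdefinition2} asserts only that the asymptotic inequality holds for \emph{some} optimal plan in $\Gamma_o(\mu,\mu_t)$, not for every one. You apply it with the specific plan $\tilde\gamma_t=(\pi^1,(1-t)\pi^1+t\pi^2)_\#\gamma$, but you never argue that this $\tilde\gamma_t$ must be the plan furnished by the hypothesis. If $\Gamma_o(\mu,\mu_t)$ contained another element $\sigma_t\neq\tilde\gamma_t$, the hypothesis could in principle hold only for $\sigma_t$, and then the integral $\int\langle\xxi(x),y-x\rangle\,d\sigma_t$ would not factor as $t\int\langle\xxi(x),y-x\rangle\,d\gamma$, and the limit argument would collapse. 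The paper closes this gap by invoking \cite[Lemma~7.2.1]{AGS}: one can choose $\gamma^*\in\Gamma_o(\mu,\nu)$ so that for every $t\in[0,1)$ the set $\Gamma_o(\mu,\theta^{\gamma^*}(t))$ is a singleton, namely $\{\tilde\gamma_t\}$. With uniqueness in hand, the ``for some'' in the hypothesis is forced to mean $\tilde\gamma_t$, and your computation goes through verbatim. This issue does not arise in the strong case because there the hypothesis is assumed for \emph{all} admissible plans, so you may freely specialize to $\tilde\gamma_t$.
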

\begin{proof}
 Let $\gamma\in\Gamma(\mu,\nu)$ and define the interpolating curve
$\theta^\gamma(t)=((1-t)\pi^1+t\pi^2)_\#\gamma$ between $\mu$ and
$\nu$, so that $\theta(0)=\mu$ and $\theta(1)=\nu$. We take
advantage of a property of Wasserstein constant speed geodesics,
shown in \cite[Lemma 7.2.1]{AGS}: there exists $\gamma^*$  in
$\Gamma_o(\mu,\nu)$ such that $\Gamma_o(\mu,\theta^{\gamma^*}(t))$
contains a unique element for any $t\in[0,1)$, given by
$\gamma_t:=(\pi^1,(1-t)\pi^{1}+t\pi^{2})_\#\gamma^*$. Then,
\eqref{subdifferentialdefinition2} can be applied in
correspondence of $\gamma_t$ and with $\theta^\gamma(t)$ in place
of $\nu$, and together with \eqref{geoconv}, it gives, for $t\to
0$,
\[
\begin{aligned}
\mathcal{W}(\nu)-\mathcal{W}(\mu)&\geq\frac{\mathcal{W}(\theta^{\gamma^*}(t))-\mathcal{W}(\mu)}{t}+\frac\lambda
2\,(1-t)\mathcal{C}^2(\mu,\nu;\gamma^*)\\
&\geq\int_{\rdrd}\langle\xxi(x),y-x\rangle\,d\gamma^*(x,y)+\frac\lambda
2\,(1-t)\mathcal{C}^2(\mu,\nu;\gamma^*)+\frac1
t\,o(\mathcal{C}(\mu,\theta(t);\gamma_t)).
\end{aligned}
\]
Passing to the limit as $t\to 0$, since
$\mathcal{C}(\mu,\theta(t);\gamma_t)=t^2\mathcal{C}(\mu,\nu;\gamma^*)$,
one gets \eqref{subdifferentialdefinition} for the  plan
$\gamma^*\in\Gamma_o(\mu,\nu)$. One reasons in the same way for the
equivalence in the case of strong subdifferentials: indeed, one
can define $\theta^{\gamma}(t)$ for the generic plan
$\gamma\in\Gamma(\mu,\nu)$ and use the convexity of $\mathcal{W}$
along any interpolating curve $t\mapsto \theta^\gamma(t)$ (see
Proposition \ref{firstproperties}).
\end{proof}

On the other hand, the general definition of subdifferential,
given in \cite[¤10.3]{AGS}, is more technical.
 According to that notion,
the subdifferential is in fact a plan $\beta\in\PP_2(\rdrd)$, as
in the following
\begin{defi}[\textbf{Plan subdifferential}]
Let $\mu\in\PP_2(\rd)$. We say that $\beta\in\PP_2(\rdrd)$ belongs
to the extended subdifferential of $\mathcal{W}$ at $\mu$ if
$\pi^1_\#\beta=\mu$ and there holds
\begin{equation}\label{plan}
\mathcal{W}(\nu)-\mathcal{W}(\mu)\geq\int_{\rdrd\times\rd}\langle
y,z-x\rangle\,d\mbox{\boldmath$\mu$}(x,y,z)+\frac\lambda
2\,\int_{\rdrd\times\rd}|z-x|^2\,d{\mbox{\boldmath$\mu$}}(x,y,z)
\end{equation}
for some
$\mbox{\boldmath$\mu$}\in\mbox{\boldmath$\Gamma$}_o(\beta,\nu)$
(we obtain a strong subdifferential if the inequality holds for
any plan
$\mbox{\boldmath$\mu$}\in\mbox{\boldmath$\Gamma$}(\beta,\nu)$). We
write $\beta\in${\boldmath$\partial$}$\mathcal{W}(\mu)$ (resp.
$\beta\in${\boldmath$\partial$}$_S\mathcal{W}(\mu)$ ). Here the
elements of {\boldmath$\Gamma$}$(\beta,\mu)$ are three-plans, that
is,  measures in $\PP(\rdrd\times\rd)$, such that
$\pi^{1,\,2}_\#${\boldmath$\mu$}$=\beta$ and
$\pi^3_\#${\boldmath$\mu$}$=\nu$. The definition of optimal plan
in this case is
\[
\mbox{\boldmath$\Gamma$}_o(\beta,\nu):=\{\gamma\in\mbox{\boldmath$\Gamma$}(\beta,\nu):\pi^{1,\,3}\gamma\in\Gamma_o(\mu,\nu)\}.
\]
\end{defi}

\begin{remark}\label{rem:subdiff}\rm
Since $\mathcal{W}$ is convex along any linearly
interpolating curve, as noticed in Proposition
\ref{firstproperties}, from \cite[Theorem 10.3.6]{AGS} we learn
that we can equivalently define the extended subdifferential of
$\mathcal{W}$ by asking inequality \eqref{plan} for any
${\mbox{\boldmath$\mu$}}\in{\mbox{\boldmath$\Gamma$}}_o(\beta,\nu)$.
\end{remark}
\begin{remark}\rm
We observe that if $\beta$ is
concentrated on the graph of a vector field $\xxi$, we have
$\beta=(\id,\xxi)_\#\mu$ and in particular $y=\xxi(x)$ for
{\boldmath$\mu$}-a.e. $(x,y,z)\in\rdrd\times\rd$. In this case the
definition reduces to \eqref{subdifferentialdefinition}.
\end{remark}

We recall that, also for extended subdifferentials, there holds
\[
|\partial\mathcal{W}|(\mu)=\mathrm{argmin}
\left\{\left(\int_{\rd}|y|^2\,d\pi^2_\#\beta(y)\right)^{1/2}:\beta\in\mbox{\boldmath$\partial$}\mathcal{W}(\mu)\right\}.
\]
Moreover, the corresponding minimizer is unique. See \cite[Theorem
10.3.11]{AGS}. We denote it by
$\mbox{\boldmath$\partial$}^o\mathcal{W}(\mu)$.

We have the following
\begin{lem}\label{planandbarycenter}
Let $\mu\in\PP_2(\rd)$. The following assertions hold:
\begin{equation}\label{equivalence}
    \beta\in\mbox{\boldmath$\partial$}_S\mathcal{W}(\mu)\Rightarrow\bar{\beta}\in\partial_S\mathcal{W}(\mu)
    \qquad\mbox{and}\qquad
    \beta\in\mbox{\boldmath$\partial$}\mathcal{W}(\mu)\Rightarrow\bar{\beta}\in\partial\mathcal{W}(\mu).
\end{equation}
\end{lem}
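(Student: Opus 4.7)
The plan is to derive the vector-subdifferential inequality \eqref{subdifferentialdefinition} for $\bar\beta$ by gluing $\beta$ with an optimal transport plan along the common first marginal, so that the three-plan to which we apply \eqref{plan} makes $y$ and $z$ conditionally independent given $x$.

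Given $\nu\in\PP_2(\rd)$, pick any $\gamma\in\Gamma_o(\mu,\nu)$ and disintegrate $\beta=\int \beta_x\,d\mu(x)$ and $\gamma=\int\gamma_x\,d\mu(x)$ with respect to their common marginal $\mu$. Define the three-plan
\[
\boldsymbol{\mu}:=\int_{\rd}(\beta_x\otimes\gamma_x)\,d\mu(x)\in\PP(\rd\times\rd\times\rd).
\]
By construction $\pi^{1,2}_\#\boldsymbol{\mu}=\beta$ and $\pi^{1,3}_\#\boldsymbol{\mu}=\gamma\in\Gamma_o(\mu,\nu)$, so $\boldsymbol{\mu}\in\boldsymbol{\Gamma}_o(\beta,\nu)$. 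By Proposition \ref{firstproperties}, $\mathcal{W}$ is $\lambda$-convex along every linearly interpolating curve, so Remark \ref{rem:subdiff} lets us apply \eqref{plan} to this specific $\boldsymbol{\mu}$ (rather than only to some unspecified optimal three-plan).

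Now exploit the product structure $\boldsymbol{\mu}_x=\beta_x\otimes\gamma_x$: integrating $y$ first yields $\int y\,d\beta_x(y)=\bar\beta(x)$, hence
\[
\int_{\rd^3}\langle y,z-x\rangle\,d\boldsymbol{\mu}(x,y,z)
= \int_{\rd}\!\!\int_{\rdrd}\!\!\langle y,z-x\rangle\,d(\beta_x\otimes\gamma_x)(y,z)\,d\mu(x)
= \int_{\rdrd}\!\!\langle\bar\beta(x),z-x\rangle\,d\gamma(x,z),
\]
while the quadratic term becomes $\frac{\lambda}{2}\mathcal{C}^2(\mu,\nu;\gamma)$ because $|z-x|^2$ is independent of $y$ and $\pi^{1,3}_\#\boldsymbol{\mu}=\gamma$. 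Plugging into \eqref{plan} yields
\[
\mathcal{W}(\nu)-\mathcal{W}(\mu)\geq \int_{\rdrd}\langle\bar\beta(x),z-x\rangle\,d\gamma(x,z)+\frac{\lambda}{2}\mathcal{C}^2(\mu,\nu;\gamma),
\]
which is exactly \eqref{subdifferentialdefinition} for the optimal plan $\gamma$, so $\bar\beta\in\partial\mathcal{W}(\mu)$. The membership $\bar\beta\in L^2(\rd,\mu;\rd)$ follows from Jensen's inequality: $\int|\bar\beta|^2\,d\mu\leq\int|y|^2\,d\beta<\infty$ since $\beta\in\PP_2(\rdrd)$.

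For the strong case, the inequality \eqref{plan} holds for every $\boldsymbol{\mu}\in\boldsymbol{\Gamma}(\beta,\nu)$. The same gluing construction can then be carried out starting from any $\gamma\in\Gamma(\mu,\nu)$ (not necessarily optimal), producing a three-plan $\boldsymbol{\mu}\in\boldsymbol{\Gamma}(\beta,\nu)$ whose computation proceeds identically. This delivers \eqref{subdifferentialdefinition} for every admissible $\gamma\in\Gamma(\mu,\nu)$, so $\bar\beta\in\partial_S\mathcal{W}(\mu)$. The only real idea in the argument is the gluing that decouples $y$ and $z$; the mild technical point is ensuring measurability of $x\mapsto\beta_x\otimes\gamma_x$, which is standard for disintegrations of Borel probability measures.
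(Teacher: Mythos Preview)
Your proof is correct and follows essentially the same approach as the paper: both construct the three-plan by gluing $\beta$ and $\gamma$ along their common first marginal $\mu$ (the paper writes it as $d\boldsymbol{\mu}(x,y,z)=d\beta_x(y)\,d\gamma(x,z)$, which is exactly your $\int(\beta_x\otimes\gamma_x)\,d\mu(x)$), then invoke Remark~\ref{rem:subdiff} for the non-strong case and the definition directly for the strong case. The only cosmetic differences are that the paper treats the strong case first and does not spell out the Jensen step for $\bar\beta\in L^2(\mu)$.
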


\begin{proof}
We begin with the proof for strong subdifferentials.
Let $\beta\in{\boldmath\partial}_S\mathcal{W}(\mu)$ and we write $\beta=\int_{\R^d}\beta_x\,d\mu(x)$.
For any $\gamma\in\Gamma(\mu,\nu)$ there holds
\[\begin{aligned}
\int_{\rdrd}\langle\bar{\beta}(x),z-x\rangle\,d\gamma(x,z)&=
\int_{\rdrd}\left\langle\int_{\R^d}y\,d\beta_x(y),z-x\right\rangle\,d\gamma(x,z)\\
&=\int_{\rdrd\times\rd}\langle y,z-x\rangle\,d\beta_{x}(y)\,d\gamma(x,z).
\end{aligned}
\]
Moreover, taking into account that $\int_{\R^d}d\beta_x(y)=1$ for $\mu$-a.e. $x\in \R^d$, we have that
\[
\int_{\rdrd}|z-x|^2\,d\gamma(x,z)=
\int_{\rdrd\times\rd}|z-x|^2\,d\beta_x(y)\,d\gamma(x,z).
\]
Let us define the three-plan $\mbox{\boldmath$\mu$}$ as
\[
\int_{\rdrd\times\rd}\phi(x,y,z)
d\mbox{\boldmath$\mu$}(x,y,z):=\int_{\rdrd\times\rd}\phi(x,y,z)
d\beta_{x}(y)\,d\gamma(x,z)
\]
for all continuous functions
$\phi:\mathbb{R}^d\times\mathbb{R}^d\times\mathbb{R}^d\to\mathbb{R}$
with at most quadratic growth at infinity. Then,
$\mbox{\boldmath$\mu$}$ belongs to
${\mbox{\boldmath$\Gamma$}}(\beta,\nu)$. Making use of
\eqref{plan} for this particular choice of
{\boldmath$\mu$}$\in\mbox{\boldmath$\Gamma$}(\beta,\nu)$, we see
that $\bar{\beta}$ satisfies \eqref{subdifferentialdefinition}.
Recalling Remark \ref{rem:subdiff}, since $ \gamma\in\Gamma_o(\mu,\nu)\Rightarrow
\mbox{\boldmath$\mu$}\in {\mbox{\boldmath$\Gamma$}}_o(\beta,\nu),
$ reasoning as done for strong subdifferentials the implication
\[
\beta\in{\mbox{\boldmath$\partial$}}\mathcal{W}(\mu)\Rightarrow\bar{\beta}\in\partial\mathcal{W}(\mu)
\]
follows.
\end{proof}

\begin{coro}\label{coroextended}
If $\beta_o$ is the minimal selection in the extended
subdifferential of $\mathcal{W}$ at $\mu$, then it coincides with
 $(\id,\bar{\beta}_o)_\#\mu$, and
$\bar{\beta}_o$ is the minimal selection in $\partial
\mathcal{W}(\mu)$.
\end{coro}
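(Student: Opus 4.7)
The plan rests on two ingredients: a norm-preserving ``embedding'' of $\partial\mathcal{W}(\mu)$ into $\mbox{\boldmath$\partial$}\mathcal{W}(\mu)$ sending $\xxi\mapsto(\id,\xxi)_\#\mu$, and Jensen's inequality applied to the disintegration of a plan subdifferential against $\mu$.

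First, I would verify the embedding. Given $\xxi\in\partial\mathcal{W}(\mu)$ and $\gamma\in\Gamma_o(\mu,\nu)$ realizing \eqref{subdifferentialdefinition}, the three-plan $\mbox{\boldmath$\mu$}:=(\pi^1,\xxi\circ\pi^1,\pi^2)_\#\gamma$ belongs to $\mbox{\boldmath$\Gamma$}_o((\id,\xxi)_\#\mu,\nu)$ since $\pi^{1,2}_\#\mbox{\boldmath$\mu$}=(\id,\xxi)_\#\mu$ and $\pi^{1,3}_\#\mbox{\boldmath$\mu$}=\gamma\in\Gamma_o(\mu,\nu)$, and substituting it in \eqref{plan} reproduces \eqref{subdifferentialdefinition}. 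Hence $(\id,\xxi)_\#\mu\in\mbox{\boldmath$\partial$}\mathcal{W}(\mu)$, and its plan norm equals $\|\xxi\|_{L^2(\mu)}$ because $\pi^2_\#((\id,\xxi)_\#\mu)=\xxi_\#\mu$. Second, for any $\beta=\int\beta_x\,d\mu(x)\in\mbox{\boldmath$\partial$}\mathcal{W}(\mu)$, the convexity of $y\mapsto|y|^2$ and Jensen's inequality give
\[\|\bar\beta\|_{L^2(\mu)}^2=\int_{\rd}\bigg|\int_{\rd} y\,d\beta_x(y)\bigg|^2\,d\mu(x)\leq\int_{\rd}\int_{\rd}|y|^2\,d\beta_x(y)\,d\mu(x)=\int_{\rdrd}|y|^2\,d\beta(x,y),\]
with equality exactly when $\beta_x=\delta_{\bar\beta(x)}$ for $\mu$-a.e.\ $x$, i.e.\ $\beta=(\id,\bar\beta)_\#\mu$.

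These two ingredients would then be combined as follows. Set $\beta_o:=\mbox{\boldmath$\partial$}^o\mathcal{W}(\mu)$. By Lemma \ref{planandbarycenter} one has $\bar\beta_o\in\partial\mathcal{W}(\mu)$, so the embedding step yields a competitor $(\id,\bar\beta_o)_\#\mu\in\mbox{\boldmath$\partial$}\mathcal{W}(\mu)$ of plan norm $\|\bar\beta_o\|_{L^2(\mu)}$, and Jensen ensures this does not exceed the plan norm of $\beta_o$. The minimality (and uniqueness) of $\beta_o$ then forces equality in Jensen, from which the equality case yields $\beta_o=(\id,\bar\beta_o)_\#\mu$, the first assertion. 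For the second, given any $\xxi\in\partial\mathcal{W}(\mu)$, the embedding yields $(\id,\xxi)_\#\mu\in\mbox{\boldmath$\partial$}\mathcal{W}(\mu)$ with plan norm $\|\xxi\|_{L^2(\mu)}$, and minimality of $\beta_o$ gives $\|\bar\beta_o\|_{L^2(\mu)}=\|\beta_o\|\leq\|\xxi\|_{L^2(\mu)}$, so $\bar\beta_o$ is indeed the minimal element of $\partial\mathcal{W}(\mu)$.

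No substantial obstacle is expected; the only step requiring care is the explicit construction of the three-plan $\mbox{\boldmath$\mu$}$ in the embedding, but this is essentially already recorded in the Remark after the definition of plan subdifferential, which notes that \eqref{plan} reduces to \eqref{subdifferentialdefinition} on plans concentrated on a graph.
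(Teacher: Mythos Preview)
Your proposal is correct and follows essentially the same approach as the paper: both combine Lemma \ref{planandbarycenter} with Jensen's inequality applied to the disintegration of $\beta$ to show that the barycentric projection does not increase the norm, and then use minimality and uniqueness of $\beta_o$ to force the equality case. You are simply more explicit than the paper about the embedding $\xxi\mapsto(\id,\xxi)_\#\mu$ from $\partial\mathcal{W}(\mu)$ into $\mbox{\boldmath$\partial$}\mathcal{W}(\mu)$, which the paper uses tacitly (invoking the Remark following the definition of plan subdifferential) when it asserts that ``since $\bar\beta_o\in\partial\mathcal{W}(\mu)$, there has to hold $\beta_o=(\id,\bar\beta_o)_\#\mu$.''
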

\begin{proof}
   For any $\beta\in$  {\boldmath$\partial$}$\mathcal{W}(\mu)$, using Jensen's inequality, there holds
\[
\int_{\rd}|y|^2\,d\pi^2_\#\beta(y)
=\int_{\rdrd}|y|^2\,d\beta(x,y)=\int_{\rd}\int_{\rd}|y|^2\,d\beta_x(y)\,d\mu(x)\geq\int_{\rd}|\bar{\beta}(x)|^2\,d\mu(x).
\]
This shows that the barycentric projection does not increase the
norm. Therefore, if $\beta_o$ is the minimal selection, since
$\bar{\beta}_o\in\partial\mathcal{W}(\mu)$, there  has to hold
$\beta_o=(\id,\bar{\beta}_o)_\#\mu$. In this case it is also clear
that $\bar{\beta}_o=\partial^o\mathcal{W}(\mu)$.
\end{proof}

\begin{remark}\label{smoothremark}\rm
Because of \eqref{equivalence}, under the same assumptions of
Theorem \ref{smooth} one sees that if
$\beta\in${\boldmath$\partial$}$_S\mathcal{W}(\mu)$, then its
barycenter $\bar{\beta}$ is given by \eqref{smoothgradient}.
\end{remark}

\begin{remark}\label{general}\rm
In the case of strong subdifferentials, the implication of Lemma
\ref{planandbarycenter} is true for any functional $\Phi$
satisfying the assumptions \eqref{hp1} and \eqref{hp2}. It is
shown in Lemma 10.3.4 and Remark 10.3.5 of \cite{AGS} that, given
$\mu\in\PP_2(\rd)$, a minimizer $\mu_\tau$ to
$\Phi(\cdot)+\frac{1}{2\tau}\,d_W^2(\cdot,\mu)$ and a plan
$\hat{\gamma}_\tau\in\Gamma_o(\mu_\tau,\mu)$, there holds
\[
\gamma_\tau\in\mbox{\boldmath$\partial$}_S\Phi(\mu_\tau),
\]
where $\gamma_\tau$ is the rescaled of $\hat{\gamma}_\tau$ (see
Definition \ref{rescaled}). Moreover, among these rescaled plans,
there exists a plan whose barycenter belongs to
$\partial_S\Phi(\mu)$. After Lemma \ref{planandbarycenter}, we may
indeed infer that this holds true for the rescaled of any optimal
plan in $\Gamma_o(\mu_\tau,\mu)$.
\end{remark}

Eventually, we are ready for the proof of Proposition
\ref{convergetominimalselection}. We make use of the general
convergence properties of rescaled plan subdifferentials shown in
\cite[Theorem 10.3.10]{AGS}, passing to barycenters by means of
Lemma \ref{planandbarycenter} and Corollary \ref{coroextended}.

\begin{proof}[\textbf{Proof of {\rm  \textbf{Proposition \ref{convergetominimalselection}}}}]
Let $\tau>0$ be small enough. Once more, let $\mu\in\PP_2(\rd)$,
let $\mu_\tau$ minimize
$\mathcal{W}(\cdot)+\frac{1}{2\tau}\,d_W^2(\cdot,\mu)$ and let
$\hat{\gamma}_\tau\in\Gamma_o(\mu_\tau,\mu)$. Moreover, let
$\gamma_\tau$ be the rescaled of $\hat{\gamma}_\tau$ (as given by
Definition \ref{rescaled}). As a consequence of Theorem
\ref{inequality}, for any $\mu\in\PP_2(\mathbb{R}^d)$ the set
$\mbox{\boldmath$\partial$}\mathcal{W}(\mu)$ is not empty.
Therefore we are in the hypotheses of     \cite[Theorem
10.3.10]{AGS}, which entails, taking into account also
\cite[Remark 10.3.14]{AGS},
\[
\lim_{\tau\to 0}{\gamma}_\tau
=\mbox{\boldmath$\partial$}^o\mathcal{W}(\mu)\quad\mbox{in}\;
\PP_2(\rdrd).
\]
But Corollary \ref{coroextended} implies that
$\mbox{\boldmath$\partial$}^o\mathcal{W}(\mu)=(\id,\partial^o\mathcal{W}(\mu))_\#\mu$.
The convergence above then means that, as $\tau\to0$,
\begin{equation}\label{ultima}
\int_{\mathbb{R}^d\times\mathbb{R}^d}\phi(x,y)\,d(\gamma_\tau)_x(y)\,d\mu_\tau(x)\to
\int_{\mathbb{R}^d\times\mathbb{R}^d}\phi(x,y)\,d(\id,\partial^o\mathcal{W}(\mu))_\#\mu(x,y)
\end{equation}
for any continuous function
$\phi:\mathbb{R}^d\times\mathbb{R}^d\to\mathbb{R}$ with at most
quadratic growth at infinity, where $(\gamma_\tau)_x$ denotes the
family of measures which disintegrates $\gamma_\tau$ with respect
to $\mu_\tau$. Letting $\zeta\in
C^\infty_0(\mathbb{R}^d;\mathbb{R}^d)$, and choosing
$\phi(x,y)=\langle y,\zeta(x)\rangle$ in \eqref{ultima}, we obtain
the convergence in the sense of Definition
\ref{strongconvergencedefinition}  of $\bar{\gamma}_\tau$ to
$\partial^o\mathcal{W}(\mu)$. On the other hand, using Jensen
inequality (in the same way as in the proof of Corollary
\ref{coroextended}) and \eqref{ultima} with $\phi(x,y)=|y|^2$ we
obtain
\[
\limsup_{\tau\to0}\int_{\mathbb{R}^d}|\bar{\gamma}_\tau|^2\,d\mu_\tau\le \lim_{\tau\to0}\int_{\mathbb{R}^d\times\mathbb{R}^d}|y|^2\,d\gamma_\tau=\int_{\mathbb{R}^d\times\mathbb{R}^d}|y|^2\,d(\id,\partial^o\mathcal{W}(\mu))_\#\mu=\int_{\mathbb{R}^d}|\partial^o\mathcal{W}(\mu)|^2\,d\mu,
\]
hence we also have the strong convergence in the sense of Definition \ref{strongconvergencedefinition}.
%
%
%
\end{proof}

\subsection*{Acknowledgements}

The authors would like to thank Giuseppe Savar\'e for several discussions about this work.
JAC acknowledges support from the project MTM2011-27739-C04-02 DGI
(Spain) and 2009-SGR-345 from AGAUR-Generalitat de Catalunya. SL
acknowledges support from Project nr. 25 of the 2007 Azioni
Integrate Italia-Spagna.
SL and EM has been partially supported by the INDAM-GNAMPA project 2011
"Measure solution of differential equations of drift-diffusion,
interactions and of Cahn-Hilliard type".
JAC and SL gratefully acknowledge the
hospitality of the Centro de Ciencias Pedro Pascual de Benasque
where this work was started. EM is supported by a postdoctoral scholarship of the Fondation Math\'ematique Jacques Hadamard, he acknowledges hospitality from Paris-sud University. EM also acknowledges the support from the
project FP7-IDEAS-ERC-StG Grant $\#$200497 (BioSMA).


\end{document}